\newtheorem{theorem}{Theorem}[section]
\newtheorem{lemma}[theorem]{Lemma}
\newtheorem{proposition}[theorem]{Proposition}
\newtheorem{definition}[theorem]{Definition}
\newtheorem{conjecture}{Conjecture}
\newtheorem{claim}[theorem]{Claim}
\newtheorem{fact}[theorem]{Fact}
\newcommand{\ep}{\epsilon}
\newcommand{\floor}[1]{\left\lfloor#1\right\rfloor}
\newcommand{\croot}[1]{\left\lceil\sqrt{#1}\right\rceil}
\title{Tiling in bipartite graphs with asymmetric minimum degrees}
\author{
Andrzej Czygrinow\thanks{School of Mathematical and Statistical Sciences, Arizona State University, Tempe, AZ 85287, USA.  E-mail address: andrzej.czygrinow@asu.edu.  Research of this author is supported in part by NSA grant H98230-08-1-0046}
\quad and \quad
Louis DeBiasio\thanks{Department of Mathematics, Miami University, Oxford, OH 45056 USA. E-mail address: debiasld@miamioh.edu.}
}
\begin{document}
\maketitle

\begin{abstract}
The problem of determining the optimal minimum degree condition for a balanced bipartite graph on $2ms$ vertices to contain $m$ vertex disjoint copies of $K_{s,s}$ was solved by Zhao \cite{Z}.  Later Hladk\'y and Schacht \cite{HS}, and Czygrinow and DeBiasio \cite{CD} determined the optimal minimum degree condition for a balanced bipartite graph on $2m(s+t)$ vertices to contain $m$ vertex disjoint copies of $K_{s,t}$ for fixed positive integers $s<t$.  

For a balanced bipartite graph $G[U,V]$, let $\delta_U=\min\{\deg(u):u\in U\}$ and $\delta_V=\min\{\deg(v): v\in V\}$.  We consider the problem of determining the optimal value of $\delta_U+\delta_V$ which guarantees that $G$ can be tiled with $K_{s,s}$.  We show that the optimal value depends on $D:=|\delta_V-\delta_U|$.  When $D$ is small, we show that $\delta_U+\delta_V\geq n+3s-5$ is best possible.  As $D$ becomes larger, we show that $\delta_U+\delta_V$ can be made smaller, but no smaller than $n+2s-2\croot{s}$.  However, when $D=n-C$ for some constant $C$, we show that there exist graphs with $\delta_U+\delta_V\geq n+s^{s^{1/3}}$ which cannot be tiled with $K_{s,s}$.
\end{abstract}

\section{Introduction}

If $G$ is a graph on $n=sm$ vertices, $H$ is a graph on $s$ vertices and $G$ contains $m$ vertex disjoint copies of $H$, then we say $G$ can be \emph{tiled} with $H$.  We now state two important tiling results which motivate the current research.

\begin{theorem}[Hajnal-Szemer\'edi \cite{HSz}]\label{hsz}
Let $G$ be a graph on $n=sm$ vertices.  If $\delta(G)\geq (s-1)m$, then $G$ can be tiled with $K_s$.
\end{theorem}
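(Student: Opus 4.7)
I would proceed by induction on $m$. The base case $m=1$ is immediate: the hypothesis $\delta(G) \ge s-1$ on $s$ vertices forces $G = K_s$. For the inductive step, the goal is to locate a copy of $K_s$ whose removal preserves the minimum-degree hypothesis on the remaining graph $G-S$ of order $s(m-1)$. Concretely, if $S \subseteq V(G)$ spans a $K_s$ and no vertex of $V(G)\setminus S$ is adjacent to all of $S$ — call such a copy \emph{inextensible} — then for every $v \notin S$ we have $\deg_{G-S}(v) \ge \deg_G(v) - (s-1) \ge (s-1)(m-1)$, and the inductive hypothesis finishes the argument.

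The crux, therefore, is to produce an inextensible $K_s$. If $\omega(G) = s$, then any copy of $K_s$ is inextensible by definition. The genuine obstacle is when $\omega(G) \ge s+1$: an arbitrary $K_s$ sitting inside a larger clique is automatically extendable, so one cannot just pick any copy. I would handle this case by fixing a maximum clique $Q$ with $|Q| = t \ge s+1$ and modifying it via a single swap with a carefully chosen outside vertex to produce an inextensible $K_s$ on some $s$-subset. The swap is driven by a potential/monovariant — for instance, a ``defect'' count measuring the total over-extension of a current near-$K_s$-tiling — and the minimum-degree assumption is used to show that admissible swaps strictly decrease this potential, forcing the procedure to terminate in an inextensible $K_s$.

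I expect this swap/exchange step to be the main obstacle; it is precisely where the difficulty of the original Hajnal--Szemer\'edi argument lies. The cleanest route, and the one I would ultimately follow, is to pass to the complement and rephrase the statement as equitable coloring: $\Delta(\overline{G}) \le m-1$ implies $\overline{G}$ admits a partition into $s$ independent sets of size $m$. Starting from an equitable $s$-coloring of $\overline{G}-v$ given by induction on $|V(G)|$, one reinserts $v$ and runs an alternating-chain procedure (in the style of Kierstead--Kostochka) that moves vertices along chains of ``solvable'' color classes until $v$ is absorbed. The degree bound $\Delta(\overline{G}) \le m-1$ is exactly what prevents the chain from closing into an unresolvable blocker. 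Most of the write-up would consist of specifying the correct monovariant, identifying the right set of reachable color classes, and verifying case-by-case that a single swap strictly improves the monovariant, ensuring termination.
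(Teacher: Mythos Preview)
The paper does not prove Theorem~\ref{hsz}; it is quoted from \cite{HSz} as background, so there is no in-paper argument to compare your proposal against.

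On the merits: your first approach --- remove an ``inextensible'' $K_s$ and induct --- cannot work, because an inextensible $K_s$ need not exist. Take $s=3$, $m=2$, and $G = K_6$ minus a single edge $\{x,y\}$; then $\delta(G)=4=(s-1)m$. Any triangle $T$ contains at most one of $x,y$ (they are nonadjacent), so at most one vertex outside $T$ --- namely the other endpoint of the missing edge --- can fail to see all of $T$; the remaining two outside vertices are complete to $T$. Hence no triangle in $G$ is inextensible, and no swap or monovariant argument can manufacture one. The greedy-removal induction is simply the wrong framework.

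Your fallback --- pass to the complement and argue via equitable coloring --- is the correct route and is how the theorem is actually proved, but you have the parameters transposed: from $\delta(G)\ge (s-1)m$ one gets $\Delta(\overline{G}) \le m-1$, and a $K_s$-tiling of $G$ corresponds to an equitable \emph{$m$}-coloring of $\overline{G}$ (that is, $m$ independent sets of size $s$), not an equitable $s$-coloring. With that corrected, the recoloring/alternating-chain machinery you allude to does succeed, but it \emph{is} the entire content of the theorem; writing ``run the Kierstead--Kostochka procedure'' is a citation rather than a proof, and the termination argument is genuinely delicate even in the modern streamlined treatments.
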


Kierstead and Kostochka generalized, and in doing so slightly improved, the result of Hajnal and Szemer\'edi.

\begin{theorem}[Kierstead-Kostochka \cite{KK}]
Let $G$ be a graph on $n=sm$ vertices.  If $\deg(x)+\deg(y)\geq 2(s-1)m-1$, for all non-adjacent $x,y\in V(G)$ then $G$ can be tiled with $K_s$.
\end{theorem}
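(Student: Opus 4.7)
The plan is to prove the theorem by induction on $m$, at each stage extracting a single copy of $K_s$ whose removal leaves a graph on $s(m-1)$ vertices still satisfying the Ore-type hypothesis, and then applying the inductive hypothesis. The base case $m=1$ is immediate: the hypothesis $\deg(x)+\deg(y)\ge 2s-3$ in a graph on $s$ vertices is incompatible with the existence of any non-adjacent pair (whose degrees are each at most $s-2$, summing to at most $2s-4$), so $G$ is complete.

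For the inductive step I would use the standard high/low decomposition $V(G)=H\cup L$, where $H=\{v:\deg(v)\ge(s-1)m\}$ and $L=V(G)\setminus H$. Two structural consequences of the hypothesis are that $L$ induces a clique in $G$ (otherwise a non-adjacent pair in $L$ would have degree sum at most $2(s-1)m-2$, violating the hypothesis) and that every $v\in L$ satisfies $\deg(v)\ge(s-2)m+1$, since $v$ has a non-neighbor of degree at most $n-2$. If $L=\emptyset$ then $\delta(G)\ge(s-1)m$ and Theorem~\ref{hsz} finishes the argument. Otherwise my aim is to build a $K_s$ that meets $L$: take $s$ vertices of $L$ when $|L|\ge s$, and take all of $L$ together with $s-|L|$ vertices from the common neighborhood of $L$ in $H$ when $1\le |L|\le s-1$. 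In the second case, the near-full degrees of vertices in $H$ make the required common neighborhood large enough to support a clique of size $s-|L|$, which one finds greedily.

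The principal obstacle is that removing a $K_s$ reduces each surviving degree by up to $s$, so the Ore-type hypothesis only transfers to $G-K_s$ with a deficit: one gets $\deg'(x)+\deg'(y)\ge 2(s-1)(m-1)-3$ when induction needs $2(s-1)(m-1)-1$. Closing this two-unit gap is the crux, and it forces the choice of $K_s$ to be delicate rather than arbitrary. My proposed remedy is to argue that if some non-adjacent pair $\{x,y\}$ in $G-K_s$ witnesses the deficit, then both $x$ and $y$ must be fully joined to the removed $K_s$, a rigid local configuration that one can disrupt by swapping some vertex of the removed clique with $x$ or $y$; the swap recovers two units in the degree sum at the cost of exchanging one extracted vertex, and iterating produces a removable $K_s$ for which no deficient pair remains. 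An equivalent, often cleaner, approach is to strengthen the inductive statement to tolerate a bounded number of slightly deficient non-adjacent pairs and repair them at the very last step. Either way, the whole proof reduces to this extraction lemma: once a ``good'' $K_s$ can always be removed, the induction runs routinely and the $L=\emptyset$ case is handed to Theorem~\ref{hsz}.
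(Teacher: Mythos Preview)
The paper does not prove this theorem; it merely cites it as background (reference \cite{KK}), so there is no proof in the paper to compare against. What remains is to judge your proposal on its own merits.

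Your diagnosis of the obstacle is correct: extracting a $K_s$ drops the Ore-sum by up to $2s$, landing at $2(s-1)(m-1)-3$ rather than the $2(s-1)(m-1)-1$ that induction requires. But the ``swap'' repair you sketch does not close the gap. Concretely, suppose the pair $\{x,y\}$ is deficient after removing $S$, so both $x$ and $y$ are complete to $S$. Replacing some $s_0\in S$ by $x$ yields a new clique $S'$, and indeed $y$ is adjacent to $s_0$, so $\{s_0,y\}$ is not a problem. However, $s_0$ is now complete to $S'$ (it was adjacent to $S\setminus\{s_0\}$ because $S$ was a clique, and to $x$ because $x$ was complete to $S$), so $\deg_{G-S'}(s_0)=\deg_G(s_0)-s$; any non-neighbor $z$ of $s_0$ that is also complete to $S'$ produces a new deficient pair $\{s_0,z\}$. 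There is no monovariant here---the swap can create as many deficient pairs as it destroys---so the iteration need not terminate. The alternative you float, strengthening the induction to allow a bounded stock of deficient pairs, runs into the same problem: each extraction can manufacture fresh deficient pairs, and nothing keeps their number bounded across steps.

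For context, Kierstead and Kostochka's actual argument is not a one-clique-at-a-time induction. It works in the complement (where the statement becomes an Ore-type criterion for equitable $(r+1)$-colouring), starts from a nearly equitable colouring, and uses a delicate recolouring/shifting procedure with a potential function to reach an equitable one. The difficulty you isolated---that a naive extraction loses two units---is precisely why a global argument of that kind is needed; no local swap on a single clique recovers them.
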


Both of these results can be shown to be best possible relative to the respective degree condition, i.e. no smaller lower bound on the degree will suffice.

For the rest of the paper we will consider tiling in bipartite graphs.  Given a bipartite graph $G[U,V]$ we say $G$ is balanced if $|U|=|V|$.  The following theorem is a consequence of Hall's matching theorem, and is an early result on bipartite graph tiling.

\begin{theorem}\label{Hall1}
Let $G$ be a balanced bipartite graph on $2n$ vertices.  If $\delta(G)\geq \frac{n}{2}$, then $G$ can be tiled with $K_{1,1}$.
\end{theorem}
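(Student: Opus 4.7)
The plan is to invoke Hall's marriage theorem: a balanced bipartite graph $G[U,V]$ with $|U|=|V|=n$ has a perfect matching (equivalently, a $K_{1,1}$-tiling) if and only if $|N(S)| \geq |S|$ for every $S \subseteq U$. So I would assume toward contradiction that Hall's condition fails, i.e., there is some $S \subseteq U$ with $|N(S)| < |S|$, and derive a contradiction from the hypothesis $\delta(G) \geq n/2$.

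I would split into two cases according to the size of $S$. When $|S| \leq n/2$, I pick any vertex $u \in S$ and observe that $N(u) \subseteq N(S)$, so $|N(S)| \geq \deg(u) \geq n/2 \geq |S|$, contradicting the assumed failure of Hall's condition. When $|S| > n/2$, I exploit the opposite side of the bipartition: because $|N(S)| < |S| \leq n$, the set $V \setminus N(S)$ is nonempty, and any $v \in V \setminus N(S)$ has all its neighbors in $U \setminus S$, yielding $\deg(v) \leq n - |S| < n/2$, again contradicting the minimum degree hypothesis.

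The argument is essentially routine; the only substantive step is recognizing that the ``right'' way to exploit the minimum degree depends on which side of $n/2$ the putative Hall violator $S$ lies on. Since the degree bound applies symmetrically to $U$ and $V$, there is no genuine obstacle, and the statement falls out as a short two-case application of Hall's theorem.
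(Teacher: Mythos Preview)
Your proof is correct and follows exactly the approach the paper indicates: the paper does not give a detailed argument for this theorem but simply states it ``is a consequence of Hall's matching theorem,'' and your two-case verification of Hall's condition is the standard way to carry this out.
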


Zhao determined the best possible minimum degree condition for a bipartite graph to be tiled with $K_{s,s}$ when $s\geq 2$.

\begin{theorem}[Zhao \cite{Z}]\label{Zhao theorem}
For each $s\geq 2$, there exists $m_0$ such that the following holds for all $m\geq m_0$.  If $G$ is a balanced bipartite graph on $2n=2ms$ vertices with 
$$\delta(G)\geq \left\lbrace \begin{array}{ll} \frac{n}{2}+s-1   & \text{ if } m \text{ is even } \\
              \frac {n+3s}{2}-2 & \text{ if } m \text{ is odd, } \end{array} \right. $$
then $G$ can be tiled with $K_{s,s}$.
\end{theorem}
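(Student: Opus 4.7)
The plan is to adopt the extremal versus non-extremal dichotomy that has become standard for tiling problems. Fix a small constant $\alpha>0$ and call $G$ \emph{$\alpha$-extremal} if there exist $A\subseteq U$ and $B\subseteq V$ with $|A|,|B|\approx n/2$ such that $e(A,V\ssm B)$ and $e(U\ssm A, B)$ are both at most $\alpha n^2$, i.e.\ $G$ is close to a disjoint union of two copies of $K_{n/2,n/2}$. The proof then splits into two main regimes.

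For the non-extremal case, I would apply the Szemer\'edi Regularity Lemma to $G[U,V]$ to obtain a reduced graph $R$ that inherits (up to a small loss) the minimum degree condition. Because $G$ is not $\alpha$-extremal, $R$ cannot have a nearly balanced sparse cut, and standard fractional tiling arguments on $R$ provide a weighted $K_{s,s}$-tiling that covers almost every cluster. Combined with the bipartite form of the Blow-up Lemma (or, sufficiently, with the standard $K_{s,s}$-embedding lemma in regular pairs) this yields an almost-perfect $K_{s,s}$-tiling of $G$. To upgrade this to a perfect tiling, I would construct an absorbing subgraph $A\subseteq G$ before applying regularity: a small balanced bipartite subgraph such that, for every sufficiently small balanced leftover set $L$, the induced subgraph $G[V(A)\cup L]$ admits a $K_{s,s}$-tiling. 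This absorber is built by showing that every ordered $2s$-tuple $(u_1,\dots,u_s,v_1,\dots,v_s)$ has polynomially many $K_{s,s}$-absorbers, a count that follows directly from the degree hypothesis $\delta(G)\ge n/2+s-1$ via standard intersection-of-neighborhoods inequalities.

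For the extremal case, the near-bipartition $(A,B)$ essentially dictates the combinatorial structure: almost all $K_{s,s}$ copies in any tiling must lie within $U_1\cup V_1$ or within $U_2\cup V_2$, where $U_1=A$, $U_2=U\ssm A$, $V_1=B$, $V_2=V\ssm B$. I would first move a bounded number of vertices across the partition to clean it up so that each $U_i$ and each $V_i$ has high internal minimum degree. Next, I would rebalance the parts so that $|U_i|=|V_i|$ is divisible by $s$; this is where the parity of $m$ is decisive. When $m$ is even, the natural split already satisfies $|U_i|=|V_i|=ms/2$ divisible by $s$, and Theorem~\ref{Hall1}-style matching arguments upgraded to $K_{s,s}$ finish the job with the weaker hypothesis $\delta\ge n/2+s-1$. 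When $m$ is odd, no balanced split has both sides a multiple of $s$, so one $K_{s,s}$ copy must straddle the partition; the extra $(s-1)/2$ in the degree bound is precisely what is needed to guarantee a straddling $K_{s,s}$ whose removal leaves two parts of sizes divisible by $s$ with internal degrees still sufficient to tile.

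I expect the main obstacle to be the odd-$m$ extremal case. Proving that the threshold $\frac{n+3s}{2}-2$ is enough, and simultaneously that any smaller value fails, requires carefully counting the vertices with ``enough'' neighbors on the opposite side of the near-bipartition, and then finding not merely a straddling $K_{s,s}$ but one whose removal does not destroy the degree conditions needed to tile each remaining side. A secondary technical difficulty is the construction of a robust $K_{s,s}$-absorber in the non-extremal case: unlike $K_s$-absorbers, a $K_{s,s}$-absorber must respect the bipartition and balance both sides simultaneously, so the usual random/greedy construction requires extra care to ensure the absorbing set itself remains balanced and small enough not to interact with the almost-tiling step.
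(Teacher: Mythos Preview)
First, note that this theorem is not proved in the present paper: it is cited as a result of Zhao \cite{Z}, and the paper builds on it to prove the more general Theorems \ref{main 1} and \ref{main 2}. So there is no ``paper's own proof'' of this exact statement to compare against. That said, the paper's proof of Theorem \ref{non-extreme} is explicitly ``adopted from Zhao,'' and in the extremal analysis the symmetric case $\delta_U=\delta_V$ is dispatched by citing \cite{Z}, so these give a fair proxy for Zhao's argument.

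Your non-extremal strategy differs from what Zhao and the paper actually do. You propose an absorbing method: build an absorber, use regularity to get an almost-tiling, absorb the remainder. Zhao's argument uses no absorber. After applying the Regularity Lemma, one shows that the reduced graph $G_r$ contains a \emph{perfect matching}; non-extremality enters via the contrapositive that a large zero-density cut in $G_r$ would force the extremal condition in $G$ (Claim \ref{split}). Each matching edge is made super-regular, the exceptional vertices in $U_0,V_0$ are not absorbed but greedily incorporated into individual copies of $K_{s,s}$ using clusters where they have many neighbors, divisibility is fixed by shifting single vertices along short paths in $G_r$, and then the Blow-up Lemma tiles each pair outright. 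Your absorbing route could plausibly be made to work, but it is not the method used, and as you note it creates extra balance issues that the matching-in-the-reduced-graph approach sidesteps entirely.

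Your extremal sketch is too optimistic, especially for odd $m$. It is not the case that a single straddling copy of $K_{s,s}$ suffices to rebalance; one needs a family of crossing copies whose intersection profiles with the four blocks $U_1,U_2,V_1,V_2$ jointly solve a small linear system. This is precisely the obstruction behind the tightness examples in the paper: the only available crossing profiles are $(s-1,1,s,0)$ and $(0,s,1,s-1)$, and no combination of them can fix a residue of $1$ in $U_1$ against $s-1$ in $V_1$. The genuine extremal analysis (Zhao's, and the paper's Section \ref{extremesection} for the asymmetric generalization) is a lengthy case analysis that tracks minimum degrees across the cut and repeatedly invokes counting lemmas for vertex-disjoint stars (Lemmas \ref{Zhao lemma}--\ref{lemma:diagonalsum}); your outline does not yet engage with any of this.
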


Hladk\'y and Schacht, and the authors determined the best possible minimum degree condition for a balanced bipartite graph to be tiled with $K_{s,t}$.

\begin{theorem}[Hladk\'y, Schacht \cite{HS}; Czygrinow, DeBiasio \cite{CD}]
For each $t>s\geq 1$, there exists $m_0$ such that the following holds for all $m\geq m_0$.  If $G$ is a balanced bipartite graph on $2n=2m(s+t)$ vertices with 
$$\delta(G)\geq \left\lbrace \begin{array}{ll} \frac{n}{2}+s-1   & \text{ if } m \text{ is even } \\
               \frac {n+t+s}{2}-1 & \text{ if } m \text{ is odd and } t\leq 2s \\
               \frac {n+3s}{2}-1 & \text{ if } m \text{ is odd and } t\geq 2s+1  \end{array} \right. $$
then $G$ can be tiled with $K_{s,s}$.
\end{theorem}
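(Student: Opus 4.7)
The plan is to prove sufficiency via the now-standard extremal-vs-non-extremal dichotomy, combined with the Szemer\'edi Regularity Lemma and the absorbing method. Before the main argument, I would verify sharpness by exhibiting, for each of the three sub-cases, a balanced bipartite graph of minimum degree one less than the claimed bound that fails to be $K_{s,t}$-tileable. For even $m$, the natural construction partitions each side into two equal halves $U=U_1\cup U_2$, $V=V_1\cup V_2$ of size $n/2$, takes complete bipartite graphs on $(U_1,V_1)$ and $(U_2,V_2)$, and adds just enough crossing edges to reach degree $n/2+s-2$; the scarcity of cross edges forces essentially all $K_{s,t}$-copies to lie within one half, and a parity/divisibility computation rules out a tiling. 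For odd $m$ one uses an analogous but unbalanced construction; the binding extremal family depends on whether $t\leq 2s$ or $t\geq 2s+1$, which accounts for the two sub-cases.

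For the non-extremal direction, fix a small $\epsilon>0$ and apply the Regularity Lemma to obtain a reduced graph $R$ whose minimum degree inherits the hypothesis up to a loss of $o(n)$. I would first reserve a small random absorbing set $A\subseteq V(G)$ with the property that any sufficiently small balanced leftover set $L\subseteq V(G)\setminus A$ can be merged with $A$ so that $A\cup L$ admits a $K_{s,t}$-tiling; such absorbers are built by finding, for every relevant root tuple, many rooted copies of $K_{s,t}$ and sampling at random, with universality guaranteed by a Chernoff-type bound. The remainder of $V(G)\setminus A$ is then almost entirely tiled as follows: the degree condition in $R$ yields an almost-perfect fractional $K_{s,t}$-tiling (via an LP-duality argument of Koml\'os type, or by a bipartite analogue of the Hajnal-Szemer\'edi theorem applied inside $R$), which lifts to an integer tiling of $G\setminus A$ by repeated applications of Hall's theorem in each super-regular pair. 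The absorber finally cleans up the $o(n)$ leftover vertices.

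For the extremal case, assume $G$ is within $\epsilon n^2$ edges of one of the extremal constructions. Then almost all edges live inside $(U_1,V_1)\cup(U_2,V_2)$, and the task reduces to deciding how many $K_{s,t}$-copies must cross the approximate cut in order to rebalance the two halves for independent tiling. The minimum-degree hypothesis provides precisely enough crossing edges to place the required number of crossing tiles, and computing this number yields the three thresholds in the statement. The main obstacle is the odd-$m$ extremal analysis: one must show that when $t\leq 2s$ the cheapest way to fix the parity uses a single crossing $K_{s,t}$ (forcing the threshold $(n+s+t)/2-1$), whereas for $t\geq 2s+1$ this becomes too wasteful and one is driven to use crossing copies of a different side-balance (forcing $(n+3s)/2-1$). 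Making the extremal constructions and the matching upper bound coincide exactly, while simultaneously ensuring that the absorber does not disturb the near-partition into $(U_1,V_1)\cup(U_2,V_2)$, is the delicate combinatorial step.
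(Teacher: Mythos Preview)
This theorem is not proved in the present paper; it is quoted in the introduction as prior work of Hladk\'y--Schacht and Czygrinow--DeBiasio, so there is no in-paper proof to compare against. That said, the approach actually taken in those references (and mirrored in Section~3 of this paper for the asymmetric setting) differs from your sketch in two substantive ways, and your extremal discussion has a gap.

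First, the non-extremal case in the cited proofs does not use absorbers or a fractional/LP tiling. One applies the Regularity Lemma, shows the reduced bipartite graph has a \emph{perfect matching} (a short Hall-type argument using that the degree sum exceeds $n$ and the non-extremal hypothesis to rule out large zero-density cuts), makes each matched pair super-regular by discarding a few vertices, redistributes the garbage set $U_0\cup V_0$ along short paths in the reduced graph, and then invokes the Blow-up Lemma inside each super-regular pair. Your absorbing scheme could presumably be made to work, but it is heavier machinery than needed, and your ``almost-perfect fractional $K_{s,t}$-tiling via LP duality'' is both vague and unnecessary: a perfect matching in the reduced graph already gives the integer structure directly.

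Second, your description of the extremal case misidentifies the mechanism behind the two odd-$m$ thresholds. It is not that ``a single crossing $K_{s,t}$'' fixes parity when $t\le 2s$ versus ``copies of a different side-balance'' when $t\ge 2s+1$. Rather, with blocks of sizes roughly $k(s+t)+s$ and $k(s+t)+t$ on each side, one analyses which profiles $(|X_1|,|X_2|,|Y_1|,|Y_2|)$ a crossing copy can have when the cross bipartite graphs are sparse and $K_{2,2}$-free; the arithmetic of covering the residues then forces the degree surplus to be at least $s+t-2$ or $3s-2$ according to whether $t\le 2s$ or $t\ge 2s+1$. Your sketch does not engage with this profile analysis, which is where the genuine case distinction arises.
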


Now we consider a more general degree condition than $\delta(G)$.  Given a bipartite graph $G[U,V]$, let $\delta_U(G):=\min\{\deg(u): u\in U\}$ and $\delta_V(G):=\min\{\deg(v): v\in V\}$.  We will write $\delta_U$ and $\delta_V$ instead of $\delta_U(G)$ and $\delta_V(G)$ when it is clear which graph we are referring to.  The following theorem is again a consequence of Hall's matching theorem and is more general than Theorem \ref{Hall1}.

\begin{theorem}\label{Hall2}
Let $G[U,V]$ be a balanced bipartite graph on $2n$ vertices.  If $\delta_U+\delta_V\geq n$, then $G$ can be tiled with $K_{1,1}$.
\end{theorem}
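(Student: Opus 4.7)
The plan is to derive this directly from Hall's theorem (the version for bipartite matchings), which states that $G[U,V]$ with $|U|=|V|=n$ has a perfect matching precisely when $|N(S)|\geq|S|$ for every $S\subseteq U$. So it suffices to verify Hall's condition under the hypothesis $\delta_U+\delta_V\geq n$.

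I would argue by contradiction: assume there is a non-empty set $S\subseteq U$ with $|N(S)|<|S|$. First I would pick any $u\in S$ and note that $N(u)\subseteq N(S)$, which gives $\delta_U\leq\deg(u)\leq|N(S)|$. Next, because $|N(S)|<|S|\leq n=|V|$, the set $V\setminus N(S)$ is non-empty; any vertex $v\in V\setminus N(S)$ has no neighbor in $S$, so $N(v)\subseteq U\setminus S$ and hence $\delta_V\leq\deg(v)\leq n-|S|$. Adding the two inequalities yields
\[
\delta_U+\delta_V\;\leq\;|N(S)|+\bigl(n-|S|\bigr)\;<\;|S|+n-|S|\;=\;n,
\]
contradicting the hypothesis. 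Therefore Hall's condition holds and $G$ has a perfect matching, i.e.\ a $K_{1,1}$-tiling.

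There is no real obstacle here; the only subtle point is checking that $V\setminus N(S)$ is non-empty, which is immediate from $|N(S)|<|S|\leq n$ in the balanced setting. The argument is genuinely an easy two-line consequence of Hall's theorem, which is presumably why the authors state it without proof.
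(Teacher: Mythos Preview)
Your proof is correct and matches the paper's approach: the paper does not give a detailed proof but simply states that the result ``is again a consequence of Hall's matching theorem,'' which is exactly what you verify by checking Hall's condition directly.
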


Notice that when $s=2$, Theorem \ref{Zhao theorem} says that if $G[U,V]$ is a balanced bipartite graph on $2n$ vertices with $\delta(G)\geq \frac{n}{2}+1$, then $G$ can be tiled with $K_{2,2}$.  
Wang made the following general conjecture about $2$-factors in bipartite graphs which would in particular provide an analog of Theorem \ref{Hall2} for tiling with $K_{2,2}$.

\begin{conjecture}[Wang \cite{W2}]
\label{con:W2}
Let $G[U,V]$ and $H$ be balanced bipartite graphs on $2n$ vertices.  If $\delta_U+\delta_V\geq n+2$ and $\Delta(H)\leq 2$, then $H\subseteq G$.
\end{conjecture}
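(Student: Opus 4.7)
The plan is to reduce to the interesting subcase and then attack the main case with a Regularity-absorbing method, while handling extremal configurations separately. Since $\Delta(H)\leq 2$ and $H$ is bipartite, $H$ decomposes into vertex-disjoint paths and even cycles. When every component has at most one edge (so $\Delta(H)\leq 1$), the statement is just a perfect matching and already follows from Theorem \ref{Hall2} under the weaker hypothesis $\delta_U+\delta_V\geq n$, so we may assume $H$ contains a component of length at least $2$. One can further reduce to the case where $H$ is a $2$-factor by closing each path $P_k$ to an auxiliary cycle before embedding and deleting the closing edge afterwards, provided the components being lengthened are reasonably long; short leftover paths can be absorbed separately.

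For the main embedding I would apply Szemer\'edi's Regularity Lemma to $G[U,V]$ to obtain a reduced bipartite graph $R[\mathcal{U},\mathcal{V}]$. The hypothesis $\delta_U+\delta_V\geq n+2$ transfers to $\delta_{\mathcal{U}}(R)+\delta_{\mathcal{V}}(R)\geq |R|+o(|R|)$, so by Theorem \ref{Hall2} applied to $R$ there is a perfect matching $M=\{U_iV_i\}_{i=1}^\ell$ in $R$. After a standard cleaning, each pair $(U_i,V_i)$ becomes $\varepsilon$-super-regular with density bounded away from $0$. I would then embed $H$ component by component: cycles of $H$ that fit inside a single super-regular pair are embedded via the Blow-up Lemma of Koml\'os--S\'ark\"ozy--Szemer\'edi, while longer cycles are cut into arcs and routed through a sequence of consecutive pairs of $M$, the bridging edges being supplied by the above-random cross edges in $R$. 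To absorb the constant number of vertices discarded during cleaning, I would pre-embed a small collection of ``switchable'' $H$-fragments (short paths or $C_4$'s inside $G$) that can swallow any constant-size leftover without disturbing the rest of the embedding.

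The main obstacle lies in the extremal case, where the reduced graph $R$ fails to have the quasi-random structure needed for the blow-up step. This occurs when $G$ resembles a Moon--Moser-type extremal configuration, e.g.\ when $U=U_1\cup U_2$ and $V=V_1\cup V_2$ with $|U_1|=|V_1|\approx n/2$ and $G$ is complete on $U_1\cup V_1$ and on $U_2\cup V_2$ but sparse across the split. Under the asymmetric hypothesis a genuinely new extremal family arises whenever $|\delta_V-\delta_U|$ is large: for instance $\delta_U\approx n$ and $\delta_V\approx 2$, in which case every vertex of $V$ has an essentially prescribed $2$-element neighborhood in $U$ and the embedding problem becomes a Hall-type question about matching the $H$-edges incident to $V$ into those $2$-neighborhoods. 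Verifying that the bound $n+2$ really rules out every such asymmetric configuration, and then constructing the embedding by hand in each surviving case (probably by a delicate rotation/extension argument along the split), is where the real work of the proof would lie.
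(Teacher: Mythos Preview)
The statement you are trying to prove is labeled \emph{Conjecture} in the paper, and the paper does not claim or give a proof of it. The paper only remarks that the special case $\delta_V\geq\delta_U=\Omega(n)$ with $n$ large was established elsewhere (reference \cite{CDK}); the general statement---in particular the regime where $\delta_U$ is bounded---remains open. So there is no ``paper's own proof'' to compare against.

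Your proposal is not a proof but a plan, and you essentially acknowledge this yourself in the last paragraph (``is where the real work of the proof would lie''). Two concrete gaps are worth naming. First, the Regularity--Blow-up framework you outline presupposes $\delta_U=\Omega(n)$: if $\delta_U$ is a constant (say $\delta_U=2$, $\delta_V=n$, which the conjecture allows), the reduced graph carries no useful minimum-degree information on the $\mathcal{U}$ side, the matching in $R$ need not exist, and the super-regular pairs you want need not cover $U$. This is exactly the regime the paper flags as genuinely different (compare Proposition~\ref{probexample} for the $K_{s,s}$ analogue). Second, your reduction to $2$-factors by ``closing each path $P_k$ to a cycle'' fails whenever $k$ is odd, since $C_k$ is then not bipartite and cannot be embedded in $G[U,V]$; handling odd paths separately is not a detail but requires its own argument. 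In short, what you have sketched is roughly the shape of the \cite{CDK} argument for the linear-degree case, not a proof of the full conjecture.
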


The authors together with Kierstead \cite{CDK} proved Wang's conjecture when $\delta_V\geq \delta_U=\Omega(n)$ and $n$ is sufficiently large. 

The purpose of this paper is to explore a generalization of Theorem \ref{Zhao theorem} in the way that Theorem \ref{Hall2} generalizes Theorem \ref{Hall1}.  As we will see, this generalization turns out to be less straightforward than one might anticipate.  Our first result is as follows.

\begin{theorem}\label{main 1}
For all $s\geq 2$ and $\lambda\in (0,\frac{1}{2})$, there exists $m_0$ such that the following holds for all $m\geq m_0$.  If $G[U,V]$ is a balanced bipartite graph on $2n=2ms$ vertices with $\delta_V\geq\delta_U\geq \lambda n$ and $\delta_U+\delta_V\geq n+3s-5,$
then $G$ can be tiled with $K_{s,s}$.
\end{theorem}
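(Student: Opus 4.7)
The plan is to combine Szemer\'edi's regularity lemma with the absorbing method and an extremal/non-extremal dichotomy, an approach that underlies the proofs of Theorem \ref{Zhao theorem} in \cite{Z} and its $K_{s,t}$ extensions in \cite{HS, CD}. Fix a small constant $\alpha = \alpha(\lambda, s) > 0$. I would call $G$ \emph{extremal} if there is a relabeling of $V(G)$ whose bipartite adjacency is within $\alpha n^2$ of one of the extremal configurations showing sharpness of Theorem \ref{Zhao theorem} (a ``space barrier'' and a ``divisibility barrier''), and \emph{non-extremal} otherwise; the proof splits along this dichotomy.

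In the non-extremal case I would first extract an absorbing set $A \subseteq V(G)$ of size $O(\alpha n)$, balanced across $U$ and $V$, using the R\"odl--Ruci\'nski--Szemer\'edi method. The relevant input is that every pair of $s$-sets $S \subseteq U$, $T \subseteq V$ has $\Omega(n)$ common neighbors on each side, which follows from $\delta_U + \delta_V \geq n + 3s - 5$ via inclusion--exclusion; a randomly chosen small collection of $K_{s,s}$-absorbing gadgets then works with positive probability. Next, I would apply the bipartite regularity lemma to $G - A$ to obtain a reduced graph $R[\mathcal{U}, \mathcal{V}]$ satisfying an inherited asymmetric minimum-degree condition. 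The non-extremal hypothesis rules out any $\alpha$-close copy of an extremal partition of $R$, so an iterated Hall-type argument (ultimately invoking Theorem \ref{Hall2} on a suitable auxiliary bipartite graph whose vertices are $s$-subsets of clusters) yields a near-perfect fractional $K_{s,s}$-tiling of $R$; the blow-up lemma upgrades this to a near-perfect integer $K_{s,s}$-tiling of $G - A$, and $A$ absorbs the small leftover.

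In the extremal case, near each extremal configuration the vertex set admits a natural approximate partition $U = U_1 \cup U_2$, $V = V_1 \cup V_2$ with $G[U_i, V_j]$ nearly complete for the ``dense'' index pairs and nearly empty for the others. The hypothesis $\delta_U \geq \lambda n$ controls the sizes of the blocks, and $\delta_U + \delta_V \geq n + 3s - 5$ bounds the number of ``misbehaving'' vertices whose neighborhoods deviate from the ideal configuration. These misbehaving vertices, together with $O(s)$ carefully chosen transfers across the partition to restore divisibility by $s$ and exact balance, reduce the problem to $K_{s,s}$-tiling each block separately; since each block is then a nearly complete balanced bipartite graph, Theorem \ref{Hall2} (applied iteratively, or via a direct Hall argument on $s$-subsets) yields the desired tiling.

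The main obstacle is the extremal analysis. Since $n + 3s - 5$ is only one less than $2\left(\frac{n+3s}{2} - 2\right) = n + 3s - 4$, essentially no slack remains in the symmetric regime, and the asymmetry $|\delta_V - \delta_U|$ together with the global bound $\delta_U \geq \lambda n$ must be used precisely when performing the balancing and divisibility repairs. Moreover, the hypothesis $\delta_U \geq \lambda n$ cannot be dropped: the construction mentioned in the abstract, with $\delta_U + \delta_V \geq n + s^{s^{1/3}}$ but no $K_{s,s}$-tiling, occurs in the regime where $\delta_U$ is bounded, and blocks any purely degree-based argument from succeeding there.
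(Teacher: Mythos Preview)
Your overall architecture---split into a non-extremal case handled via regularity and an extremal case handled by a direct structural analysis---matches the paper exactly. However, there is a concrete gap in your non-extremal step, and your approach there diverges from the paper's.

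The claim that ``every pair of $s$-sets $S \subseteq U$, $T \subseteq V$ has $\Omega(n)$ common neighbors on each side'' does \emph{not} follow from $\delta_U+\delta_V\ge n+3s-5$ by inclusion--exclusion. Inclusion--exclusion gives $|\bigcap_{u\in S}N(u)|\ge s\delta_U-(s-1)n$, which is negative whenever $\delta_U<\tfrac{s-1}{s}n$; since you only assume $\delta_U\ge\lambda n$ for an arbitrary $\lambda\in(0,\tfrac12)$, an $s$-set in $U$ may have no common neighbour at all. The same obstruction applies to $s$-sets in $V$, since $\delta_V$ need only be about $n/2$. So the absorber construction as you describe it does not get off the ground, and it is not clear how to repair it in this asymmetric regime.

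The paper avoids absorbing entirely. In the non-extremal case (Theorem~\ref{non-extreme}) it finds a perfect matching in the reduced graph, trims each matched pair to super-regularity, greedily incorporates the exceptional vertices into copies of $K_{s,s}$ one at a time (each exceptional vertex is adjacent to a positive fraction of clusters, which is enough), and then balances cluster sizes modulo $s$ by routing along short paths in the reduced graph; the Blow-up Lemma finishes. This only needs $\delta_U+\delta_V\ge(1-2\beta)n$ and $\delta_U\gg\alpha n$, not the full $n+3s-5$, and sidesteps the common-neighbour issue.

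Your extremal sketch is in the right spirit, but the paper's Section~\ref{extremesection} is where the constant $3s-5$ is actually used, and the analysis is far more delicate than ``$O(s)$ carefully chosen transfers'': it occupies the bulk of the paper and branches on whether $k_2\gg k_1$ or $k_2\approx k_1$, with many further subcases tracking the exact residues of $|U_i|$, $|V_i|$ modulo $s$ and repeatedly invoking star-counting lemmas (Lemma~\ref{STARSlemma}, Lemma~\ref{lemma:diagonalsum}) to locate the needed $s$-stars across the sparse pairs.
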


Note that a specific instance of Theorem \ref{main 1} is that for sufficiently large $n$ and $\delta_V\geq \delta_U=\Omega(n)$, $\delta_U+\delta_V\geq n+1$ is sufficient for tiling with $K_{2,2}$ (compare this statement to Conjecture \ref{con:W2}).

Perhaps surprisingly, we show that a smaller degree sum will suffice when the difference between $\delta_V$ and $\delta_U$ is large enough.  In order to precisely state our second result we need the following definition.  

\begin{definition}
Let $c:\mathbb{Z}^+\to \{0,1\}$ such that 
$$c(s) = \left\lbrace \begin{array}{ll} 0   & \text{ if } q=0 ~\text{ or }~ p+1\leq q\leq 2p\\
               1 & \text{ if } 1\leq q\leq p  \end{array} \right. $$
where $p$ and $q$ are the unique non-negative integers satisfying $s=p^2+q$ and $0\leq q\leq 2p$.
\end{definition} 

\begin{theorem}\label{main 2}
For all $s\geq 2$ and $\lambda\in (0,\frac{1}{2})$, there exists $m_0$ such that the following holds for all $m\geq m_0$.  Let $G[U,V]$ be a balanced bipartite graph on $2n=2ms$ vertices with $\delta_V\geq\delta_U\geq \lambda n$ and let $k_1$ and $k_2$ be the unique integers such that $k_1+k_2=m$ and $\delta_U=k_1s+s+r$ with $0\leq r\leq s-1$.  For all $0\leq d\leq s-2\croot{s}+c(s)+1$, if 
$k_2\geq (s-d)k_1$ and $$\delta_U+\delta_V\geq n+2s-2\croot{s}+d+c(s),$$
then $G$ can be tiled with $K_{s,s}$.
\end{theorem}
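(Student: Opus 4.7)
The plan is to adapt the regularity-plus-extremal dichotomy used in \cite{Z,HS,CD}. Fix constants $0 < \varepsilon \ll \alpha \ll \lambda$, apply the degree form of Szemer\'edi's Regularity Lemma to $G$, and obtain a partition of $U$ into clusters $U_1,\dots,U_t$ and of $V$ into clusters $V_1,\dots,V_t$ of equal size, together with a reduced bipartite graph $R$ whose edges correspond to $\varepsilon$-regular pairs of density at least some $\eta > 0$. The hypotheses transfer to $R$ up to $o(1)$ loss: writing $k_1',k_2'$ for the cluster-scaled versions of $k_1,k_2$, we get $\delta_U(R)+\delta_V(R) \ge t + (2s-2\croot{s}+d+c(s))\,t/m$ and $k_2' \ge (s-d)k_1'$, modulo lower-order terms.

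Say that $G$ is $\alpha$-\emph{extremal} if its structure is $\alpha$-close to the construction witnessing tightness of the degree-sum bound: a partition of $V$ into $k_1$ ``small'' blocks and $k_2$ ``large'' blocks such that each $u\in U$ is essentially adjacent to one small block plus all of the large blocks, and an analogous restriction on the $V$-side. In the non-extremal case, I would use the transferred degree-sum condition together with non-extremality to produce a fractional $K_{s,s}$-tiling of $R$; apply standard regularity-based embedding to convert this into a $K_{s,s}$-tiling of $G$ covering all but $o(n)$ vertices; and absorb the leftover using an absorbing structure prepared beforehand from $\delta_U \ge \lambda n$. The hypothesis $k_2 \ge (s-d)k_1$ is precisely what enables the first step: it guarantees enough ``large'' $V$-clusters in $R$ to host the $K_{s,s}$-copies that the asymmetric degrees would otherwise obstruct.

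In the extremal case I would argue directly on $G$ after recovering the near-partition. The numerology $2s - 2\croot{s}$ arises from counting the degree cost of ``$(a,b)$-type'' $K_{s,s}$-copies that place $a$ of their $U$-vertices in one side of the $U$-partition and $b$ of their $V$-vertices in one side of the $V$-partition: minimizing the required degree sum over integer pairs $a,b$ with $ab \le s$ yields $2s - 2\croot{s}$, and the term $c(s)$ records the rounding correction coming from $s = p^2 + q$. I would enumerate the feasible tile shapes, set up an integer packing problem for the block sizes, and verify that with the $+d$ slack together with the hypothesis $k_2 \ge (s-d) k_1$ this packing admits an exact solution.

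The main obstacle is the extremal case. Because the degree-sum bound is sharp, the tile packing must fit the block sizes exactly, and the case analysis branches on the residue $r$ of $\delta_U$ modulo $s$, on the regime of $q$ in the decomposition $s = p^2 + q$ that controls $c(s)$, and on the precise value of $d$. Converting the structural $\alpha$-closeness into a clean enough partition to execute this exact packing, and using the tiny degree slack to perform vertex swaps between tiles when the default packing fails, is where the bulk of the work lies.
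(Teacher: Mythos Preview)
Your high-level dichotomy (regularity into non-extremal versus extremal) matches the paper, but several structural pieces are misplaced. First, the extremal configuration is much simpler than you describe: it is not a partition of $V$ into $k_1+k_2$ blocks, but a single $2\times 2$ block structure --- sets $U_1'\subseteq U$, $V_2'\subseteq V$ with $|U_1'|=k_1s$, $|V_2'|=k_2s$ and $d(U_1',V_2')\leq\alpha$. Second, you misattribute the role of the hypothesis $k_2\geq(s-d)k_1$: in the paper the non-extremal case needs only the weak degree sum $\delta_U+\delta_V\geq(1-2\beta)n$ and $\delta_U=\Omega(n)$; the asymmetry condition is used \emph{only} in the extremal case. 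Third, your non-extremal machinery (fractional $K_{s,s}$-tiling of $R$ plus absorbing) is heavier than needed. The paper simply finds a perfect matching in the reduced graph $R$ (using the transferred degree-sum to rule out the extremal structure whenever a maximum matching is imperfect), makes each matched pair super-regular by minor vertex reallocation, and applies the Blow-up Lemma directly; no absorbers are built.

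The real content of the extremal case, which you leave vague, is a star-moving argument rather than an integer packing over tile shapes. After pre-processing one obtains $U=U_1\cup U_0\cup U_2$, $V=V_1\cup V_0\cup V_2$ with $|U_i|,|V_i|\approx k_is$ and $G[U_i,V_i]$ nearly complete; the task is to move $O(s)$ vertices across the partition so that both sides become divisible by $s$ and balanced. One writes $|U_1|=\ell_1 s+y$, $|V_0\cup V_1|=\ell_1 s+z$, and looks for $y$ vertex-disjoint $(s-x)$-stars from $U_1$ to $V_2$ (each completable to a crossing $K_{s,s}$ using $x$ vertices of $V_1$). The hypothesis $k_2\geq(s-d)k_1$ enters precisely here: it forces $|V_2|\geq\frac{s-1/2}{d+1}|U_1|$, which is what makes the star-counting lemma produce enough $s$-stars when $\delta(V_2,U_1)\geq d+1$. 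The $2s-2\croot{s}+c(s)$ numerology then emerges from the feasibility constraint $z\geq xy$ (you need $z$ spare vertices in $V_1$), and the degree bound yields $x+y\leq z-(s-2\croot{s})+O(1)$; an AM--GM argument on $xy\leq\left(\frac{x+y}{2}\right)^2$ together with a short calculus check over $z\in[s-2\croot{s}+2,s-1]$ closes the case, with $c(s)$ recording exactly the parity correction you identified. Your intuition about ``minimizing over $(a,b)$ with $ab\leq s$'' is morally right for the tightness side, but the positive argument runs through stars and the inequality $xy\leq z$, not through enumerating $K_{s,s}$ shapes.
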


%


As mentioned earlier, Zhao gave examples which show that Theorem \ref{Zhao theorem} is best possible.  In particular, \cite{Z} contains an example of a bipartite graph $G_0$ with $\delta(G_0)=\frac{n+3s}{2}-3$ which cannot be tiled with $K_{s,s}$.  Consequently, 
%
%
there are examples with $\delta_U+\delta_V=2\delta(G)=n+3s-6$ which cannot be tiled with $K_{s,s}$.  So the degree condition in Theorem \ref{main 1} cannot be improved in general.  Notice that Theorem \ref{Zhao theorem} gives a better bound on $\delta(G)$ when $m$ is even, which may seem to suggest that $\delta_U+\delta_V\geq n+2s-3$ suffices when $m$ is even (based on Theorem \ref{main 1}).  However, we show that when $m$ is even (or odd) there are graphs with $\delta_U+\delta_V=n+3s-7$ that cannot be tiled with $K_{s,s}$.

\begin{proposition}\label{meven}
Let $s\geq 2$.  For every $j\in \mathbb{N}$, there exists an integer $m$ and a balanced bipartite graph $G[U,V]$ on $2n=2ms$ vertices such that $\delta_U+\delta_V=n+3s-7$ and $2sj-s-1\leq |\delta_V-\delta_U|\leq 2sj-1$, but $G$ cannot be tiled with $K_{s,s}$.
\end{proposition}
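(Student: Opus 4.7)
The plan is to construct an extremal graph $G$ by asymmetrizing the extremal example Zhao uses in the proof of Theorem~\ref{Zhao theorem} for odd $m$; that example has $\delta_U=\delta_V=(n+3s)/2-3$ and $\delta_U+\delta_V=n+3s-6$, and I will shift the sizes of its parts to save one additional unit from the degree sum while imposing the asymmetry demanded by $j$. Concretely, take $m$ odd and sufficiently large, partition $U=U_1\cup U_2$ and $V=V_1\cup V_2$ with
\[
|U_1|=\tfrac{(m-1)s}{2}+1-sj, \qquad |V_1|=\tfrac{(m-1)s}{2}+sj,
\]
put in every edge having an endpoint in $U_1\cup V_1$, and between $U_2$ and $V_2$ place a bipartite graph $H$ with $\delta_{U_2}(H)+\delta_{V_2}(H)=4s-8$ that is $K_{s-1,s-1}$-free (and more generally avoids the bi-clique patterns that Zhao's $H$ avoids in the symmetric case).

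These choices yield $|U_1|+|V_1|=n-s+1$, $|U_1|\equiv 1$ and $|V_1|\equiv 0\pmod s$, and
\begin{align*}
\delta_U+\delta_V &= (|U_1|+|V_1|)+(\delta_{U_2}(H)+\delta_{V_2}(H)) = n+3s-7,\\
\delta_V-\delta_U &= (1-2sj)+(\delta_{V_2}(H)-\delta_{U_2}(H)).
\end{align*}
Choosing $\delta_{U_2}(H)=\delta_{V_2}(H)=2s-4$ gives $|\delta_V-\delta_U|=2sj-1$, which is the upper endpoint of the prescribed window $[2sj-s-1,2sj-1]$; more general splits of $4s-8$ (and, if wanted, a single-unit perturbation of $|U_1|,|V_1|$) let me land anywhere in this window. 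To show that $G$ admits no $K_{s,s}$-tiling, suppose for contradiction that $\mathcal{T}$ is one. Each $F\in\mathcal{T}$ contributes integers $a_F,b_F\in\{0,1,\dots,s\}$ recording the numbers of $U_1$- and $V_1$-vertices of $F$; the $U_2$--$V_2$ part of $F$ spans a $K_{s-a_F,s-b_F}$ in $H$, and the bi-clique-forbidden properties of $H$ confine $(a_F,b_F)$ to the same ``allowed set'' used in Zhao's analysis. Summing $\sum_F a_F=|U_1|$ and $\sum_F b_F=|V_1|$ with the congruences mod $s$ and $|U_1|+|V_1|=n-s+1$ then produces a contradiction, exactly mirroring Zhao's counting for the symmetric extremal graph (where $|U_1|+|V_1|=n-s+2$); we have in fact cut one further unit from the total than Zhao's argument required. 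In the simplest case $s=2$, $H$ is empty and the argument reduces to the observation that $|U_2|$ is odd and $|V_2|$ is even, so tileability would require $|U_1|+|V_1|\ge n+1$, whereas we have $n-1$.

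The main obstacle is constructing $H$ so that it simultaneously has the unequal side sizes $|U_2|=\tfrac{(m+1)s}{2}-1+sj$ and $|V_2|=\tfrac{(m+1)s}{2}-sj$, satisfies the side-dependent degree constraint $\delta_{U_2}(H)+\delta_{V_2}(H)=4s-8$, and meets the $K_{s-1,s-1}$-freeness (and further bi-clique avoidances) demanded by Zhao's counting. This can be done by taking a high-girth bipartite graph on the shorter-side size---essentially the graph that appears in Zhao's symmetric extremal example---and extending to the longer side by adjoining vertices whose neighborhoods in the shorter side are pairwise distinct and of low degree, keeping the two-sided minimum degree sum at exactly $4s-8$; verifying that this gives a graph with the required properties is routine provided $m$ is chosen sufficiently large as a function of $j$.
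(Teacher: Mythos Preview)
Your construction is structurally different from the paper's (and from Zhao's), and the non-tileability argument you sketch does not go through for $s\ge 4$. In Zhao's extremal example---and in the paper's asymmetric variant of it---one takes $G[U_i,V_i]$ complete for $i=1,2$ and makes the \emph{off-diagonal} bipartite graphs $G[U_1,V_2]$ and $G[U_2,V_1]$ sparse (specifically $K_{2,2}$-free, built from the graphs $P(m,p)$ of Lemma~\ref{No K22}). This forces any crossing copy of $K_{s,s}$ to have one of exactly two rigid profiles, $(s-1,1,s,0)$ or $(0,s,1,s-1)$, and the contradiction comes from a short linear system in the counts $i_1,i_2$ of these two profiles. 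Your construction instead makes three of the four blocks complete and puts the sparse graph $H$ on $G[U_2,V_2]$; the resulting constraint on a tile's profile $(a_F,b_F)=(|F\cap U_1|,|F\cap V_1|)$ is only that $H$ contain no $K_{s-a_F,s-b_F}$, which is far weaker and does \emph{not} reduce to Zhao's two-element allowed set.

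Concretely, with $\delta_{U_2}(H)+\delta_{V_2}(H)=4s-8$ one of the two minimum degrees is at least $2s-4\ge s$ once $s\ge 4$, so (say) every $u\in U_2$ has $\deg_H(u)\ge s$. This makes tiles of profile $(a_F,b_F)=(s-1,0)$ available: one vertex in $U_2$ adjacent to $s$ vertices of $V_2$, completed by $s-1$ vertices of $U_1$. Your graph then \emph{does} admit a $K_{s,s}$-tiling: writing $|U_1|=ps+1$ and $|V_1|=qs$ with $p+q+1=m$, take $q$ tiles of type $(0,s)$ inside $G[U_2,V_1]$, $p+2-s$ tiles of type $(s,0)$ inside $G[U_1,V_2]$, and $s-1$ tiles of type $(s-1,0)$. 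The last family needs only $s-1$ vertex-disjoint stars $K_{1,s}$ in $H$ centered in $U_2$, which exist for large $m$ since $H$ is $K_{2,2}$-free with $\delta_{U_2}(H)\ge s$. The only way your counting could force $a_F+b_F\ge s$ for every tile (which is what is needed, since $|U_1|+|V_1|=n-s+1<ms$) would be to have $\Delta_{U_2}(H)<s$ and $\Delta_{V_2}(H)<s$ simultaneously; but that is incompatible with $\delta_{U_2}(H)+\delta_{V_2}(H)=4s-8$ when $s\ge4$. The paper avoids this obstruction entirely by placing the sparse $K_{2,2}$-free pieces on the off-diagonals $G[U_1,V_2]$ and $G[U_2,V_1]$, not on $G[U_2,V_2]$.
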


We also give examples to show that the degree is tight when $d=0$ in Theorem \ref{main 2}.

\begin{proposition}\label{k1smallexample}
For every $s\geq 2$, there exists a balanced bipartite graph $G[U,V]$ with $k_2\geq sk_1$ and
$$\delta_U+\delta_V=n+2s-2\croot{s}+c(s)-1$$ such that $G$ cannot be tiled with $K_{s,s}$.  
\end{proposition}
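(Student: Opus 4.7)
The plan is to exhibit the extremal graph directly via a ``shared-core'' design on a small set of low-degree vertices in $U$. Fix $\ell:=\croot{s}$ and take $U=L\cup H$ with $L=\{u_1,\dots,u_\ell\}$ and $|H|=n-\ell$. On the $V$-side, pick a ``core'' $C\subseteq V$ with $|C|=s-1$ together with pairwise disjoint ``tails'' $B_1,\dots,B_\ell\subseteq V\setminus C$, each of size $a-(s-1)$, where $a$ is the largest integer strictly less than $s+(\ell-1)(s-1)/\ell$. Set $N(u_i):=C\cup B_i$ for each $i$, and join every vertex of $H$ to every vertex of $V$. For $m\geq\ell$ we have $(s-1)+\ell(a-s+1)<\ell s\leq n$, so $V\setminus(C\cup\bigcup_i B_i)$ is nonempty and the construction is realizable.

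The degree computation is immediate. Each vertex of $L$ has degree $a$ and each vertex of $H$ has degree $n$, so $\delta_U=a$; a vertex $v\in V$ has degree $n$ if $v\in C$, degree $n-\ell+1$ if $v\in B_i$ for some $i$, and degree $n-\ell$ if $v\in V\setminus(C\cup\bigcup_i B_i)$, so $\delta_V=n-\ell$. Hence $\delta_U+\delta_V=n+a-\ell$. Because $a<2s$ one has $k_1=0$ and $k_2=m\geq 0=sk_1$, so the hypothesis $k_2\geq sk_1$ holds automatically.

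To rule out a $K_{s,s}$-tiling I use two observations. First, $N(u_i)\cap N(u_j)=C$ for $i\neq j$, and $|C|=s-1<s$, so no $K_{s,s}$ copy can contain two distinct vertices of $L$. Consequently any purported tiling would place the $\ell$ vertices of $L$ into $\ell$ distinct copies whose $V$-sides are pairwise disjoint $s$-subsets of $\bigcup_i N(u_i)$. Second, $|\bigcup_i N(u_i)|=(s-1)+\ell(a-s+1)=\ell a-(\ell-1)(s-1)$, and the choice of $a$ is equivalent to $\ell a-(\ell-1)(s-1)<\ell s$; so $\ell$ pairwise disjoint $s$-subsets cannot fit inside $\bigcup_i N(u_i)$, a contradiction.

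The main obstacle is the routine case check that $a-\ell=2s-2\croot{s}+c(s)-1$. Writing $s=p^2+q$ with $0\leq q\leq 2p$, one sets $\ell=p$ if $q=0$ and $\ell=p+1$ otherwise. Using $p^2\equiv 1\pmod{p+1}$, one finds $(s-1)\bmod(p+1)=q$ when $\ell=p+1$, so $(s-1)/\ell$ is an integer precisely when $q=p+1$ (and never when $q=0$ with $p\geq 2$). Taking $a=2s-1-\lceil(s-1)/\ell\rceil$ in the noninteger case and $a=2s-2-(s-1)/\ell$ in the integer case, a brief computation over the three ranges $q=0$, $1\leq q\leq p$, and $p+1\leq q\leq 2p$ produces $a-\ell=2s-2p-1$, $2s-2p-2$, and $2s-2p-3$ respectively, which matches $2s-2\croot{s}+c(s)-1$ by the definition of $c(s)$.
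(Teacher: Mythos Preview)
Your proof is correct and the arithmetic checks out in all three ranges of $q$, but the construction is genuinely different from the paper's. The paper builds a four-block graph $G[U_1\cup U_2,V_1\cup V_2]$ with $|U_1|=k_1s+y$, $|V_1|=k_1s+s-1$, complete diagonals and a complete $G[V_1,U_2]$, where each vertex of $U_1$ has $s-x$ \emph{private} neighbours in $V_2$ (pairwise disjoint neighbourhoods there). The obstruction is a counting argument on crossing copies: any $K_{s,s}$ meeting $U_1$ and $U_2\cup V_2$ uses exactly one vertex of $U_1$ and at least $x$ vertices of $V_1$, so $xy\geq s$ blocks the tiling; minimising $x+y$ subject to $xy\geq s$ then gives $x+y=2\croot{s}-c(s)$. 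Your design instead places $\ell=\croot{s}$ low-degree vertices in $U$ with a \emph{shared} core $C$ of size $s-1$ plus disjoint tails; the obstruction is that no two such vertices can share a copy (common neighbourhood too small), and a pigeonhole on $|\bigcup N(u_i)|<\ell s$ finishes it.

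The trade-off is this: your argument is cleaner and more elementary, but it forces $\delta_U=a<2s$ and hence $k_1=0$. That proves the proposition exactly as stated, yet it does not witness tightness of Theorem~\ref{main 2} in its actual regime of applicability, which requires $\delta_U\geq\lambda n$. The paper's construction leaves $k_1$ free (subject to $|V_2|\geq(s-x)|U_1|$, which is implied by $k_2\geq sk_1$), so one can take $k_1=\Theta(m)$ and obtain $\delta_U=\Omega(n)$. If you want your example to serve the same purpose, you would need to embed your ``shared-core'' gadget into a larger host so that the low-degree side still has linear minimum degree; as written it only establishes the literal statement of the proposition.
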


%
%
%

Finally, when $\delta_U$ is constant, 
we show that there exist graphs (without constructing them) with $\delta_U+\delta_V$ much larger than $n+3s$ which cannot be tiled with $K_{s,s}$.

\begin{proposition}\label{probexample}
There exists $s_0, n_0\in \mathbb{N}$ such that for all $s\geq s_0$, there exists a graph $G[U,V]$ on $n\geq n_0$ vertices with $\delta_U+\delta_V=n+s^{s^{1/3}}$ such that $G$ cannot be tiled with $K_{s,s}$.
\end{proposition}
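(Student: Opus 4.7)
The plan is to apply the probabilistic method, consistent with the statement's assertion that examples exist without being exhibited. Fix $s$ sufficiently large and choose any $n=ms$ with $m$ an integer in the range $s^{s^{1/3}}\le m\le e^{s/20}$, which is nonempty for large $s$ since $s^{1/3}\log s=o(s)$. Let $G$ be the random bipartite graph on $U\sqcup V$ with $|U|=|V|=n$ in which each potential edge is present independently with probability $1-p$, where $p:=3\log m/s$; equivalently, the bipartite non-edge graph $\bar G$ is $G(n,n,p)$. The goal is to show that with positive probability $G$ admits no $K_{s,s}$-tiling while $\delta_U(G)+\delta_V(G)\ge n+s^{s^{1/3}}$; edges may then be removed (which cannot create a tiling) to bring the degree sum down to exactly $n+s^{s^{1/3}}$.

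For the first-moment argument, the number of $K_{s,s}$-tilings of $K_{n,n}$ is $T(n,s):=(n!)^2/((s!)^{2m}m!)$, and Stirling gives $\log T(n,s)=(2-1/s)\,n\log m+O(m\log s)$. A fixed tiling is realized in $G$ iff all $ns$ of its underlying edges are present, which has probability $(1-p)^{ns}$. Since $nsp=3n\log m$ and $m\log s=o(n\log m)$,
\begin{equation*}
\log\mathbb{E}[\#\text{tilings in }G]\le (2-1/s)\,n\log m+O(m\log s)-nsp=-(1+o(1))\,n\log m\to-\infty,
\end{equation*}
so Markov's inequality yields $\Pr[G\text{ contains a }K_{s,s}\text{-tiling}]=o(1)$.

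For the degree condition, since $np=3m\log m\gg\log n$, a Chernoff bound combined with a union bound over the $2n$ vertices shows $\Delta_U(\bar G),\Delta_V(\bar G)\le (1+\varepsilon)np$ with probability $1-o(1)$, for any fixed $\varepsilon>0$. Since $\delta_U(G)=n-\Delta_U(\bar G)$ and similarly for $V$,
\begin{equation*}
\delta_U(G)+\delta_V(G)\ge 2n-2(1+\varepsilon)np=n+n\left(1-6(1+\varepsilon)\frac{\log m}{s}\right).
\end{equation*}
The bound $\log m\le s/20$ makes the parenthesized factor at least $1/2$ (for $\varepsilon$ sufficiently small), and since $n\ge s\cdot s^{s^{1/3}}\ge 2s^{s^{1/3}}$, it follows that $\delta_U(G)+\delta_V(G)\ge n+n/2\ge n+s^{s^{1/3}}$. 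A union bound on the two $(1-o(1))$-events produces a single realization of $G$ with both properties.

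The main obstacle will be verifying that the parameter regime is simultaneously compatible. The first moment demands $\log m\ll s$ so that $p$ is small enough for concentration to control $\Delta(\bar G)$, while the degree-sum condition requires $n\gtrsim s^{s^{1/3}}$. Both coexist precisely because $s^{1/3}\log s=o(s)$; the exponent $s^{1/3}$ in the statement is essentially cosmetic, as any function of $s$ that is $o(s/\log s)$ would suffice.
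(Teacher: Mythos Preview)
Your proof is correct but takes a genuinely different route from the paper's.

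The paper plants a small random bipartite graph $H$ on $A\times B$ with $|A|=s^{s^{1/3}}$, $|B|=\tfrac{s}{2s^{1/3}}|A|$, then extends by adding $A',B'$ with $A'$ complete to $B\cup B'$ and $A$ having no edges to $B'$. Non-tilability is \emph{structural}: $H$ is chosen $K_{d,s}$-free (with $d=2s^{1/3}$), so any $K_{s,s}$ meeting $A$ uses at most $d-1$ vertices of $A$ and all $s$ of its $V$-vertices in $B$, forcing more $B$-vertices than exist. This yields $\delta_U\approx 2s^{s^{1/3}}$ (constant in $n$), $\delta_V=n-s^{s^{1/3}}$, and works for arbitrarily large $n$.

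Your construction is a uniformly random dense graph with a first-moment bound on tilings. This gives $\delta_U\approx\delta_V$, so your example sits at the opposite end of the $\delta_V-\delta_U$ axis from the paper's (cf.\ the figure in the introduction). The price is that $n\le s e^{s/20}$ is bounded in terms of $s$; in fact a first moment on copies of $K_{s,s}$ already shows your random graph contains no $K_{s,s}$ at all, since $\binom{n}{s}^2(1-p)^{s^2}\le (em)^{2s}e^{-3s\log m}=e^{s(2-\log m)}\to 0$. So your argument really exploits that $m$ is below the $m_0(s)$ of Theorems~\ref{main 1}--\ref{main 2}, whereas the paper's exploits that $\delta_U$ is below $\lambda n$. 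Both prove the proposition as stated; the paper's version additionally supports its narrative that the degree-sum threshold blows up specifically in the constant-$\delta_U$ regime.

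A minor remark: your ``remove edges to hit $\delta_U+\delta_V=n+s^{s^{1/3}}$ exactly'' is slightly glib (one must avoid dropping the sum by $2$ at the last step), but the paper's own proof only establishes $\delta_U+\delta_V\ge n+s^{s^{1/3}}$ and leaves the same adjustment implicit, so this is not a gap relative to the source.
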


The following figure summarizes the results of Theorems \ref{main 1} and \ref{main 2} and Propositions \ref{meven}, \ref{k1smallexample}, and \ref{probexample} by plotting the degree sum needed for tiling with $K_{s,s}$ in terms of the difference between $\delta_V$ and $\delta_U$.  The first grey area in the figure represents a range of values of $\delta_V-\delta_U$ for which we cannot provide a matching lower bound on $\delta_U+\delta_V$.  The second grey area represents a range of values of $\delta_V-\delta_U$ for which we cannot provide non-trivial upper or lower bounds on $\delta_U+\delta_V$.

\begin{figure}[ht]
\centering
\input{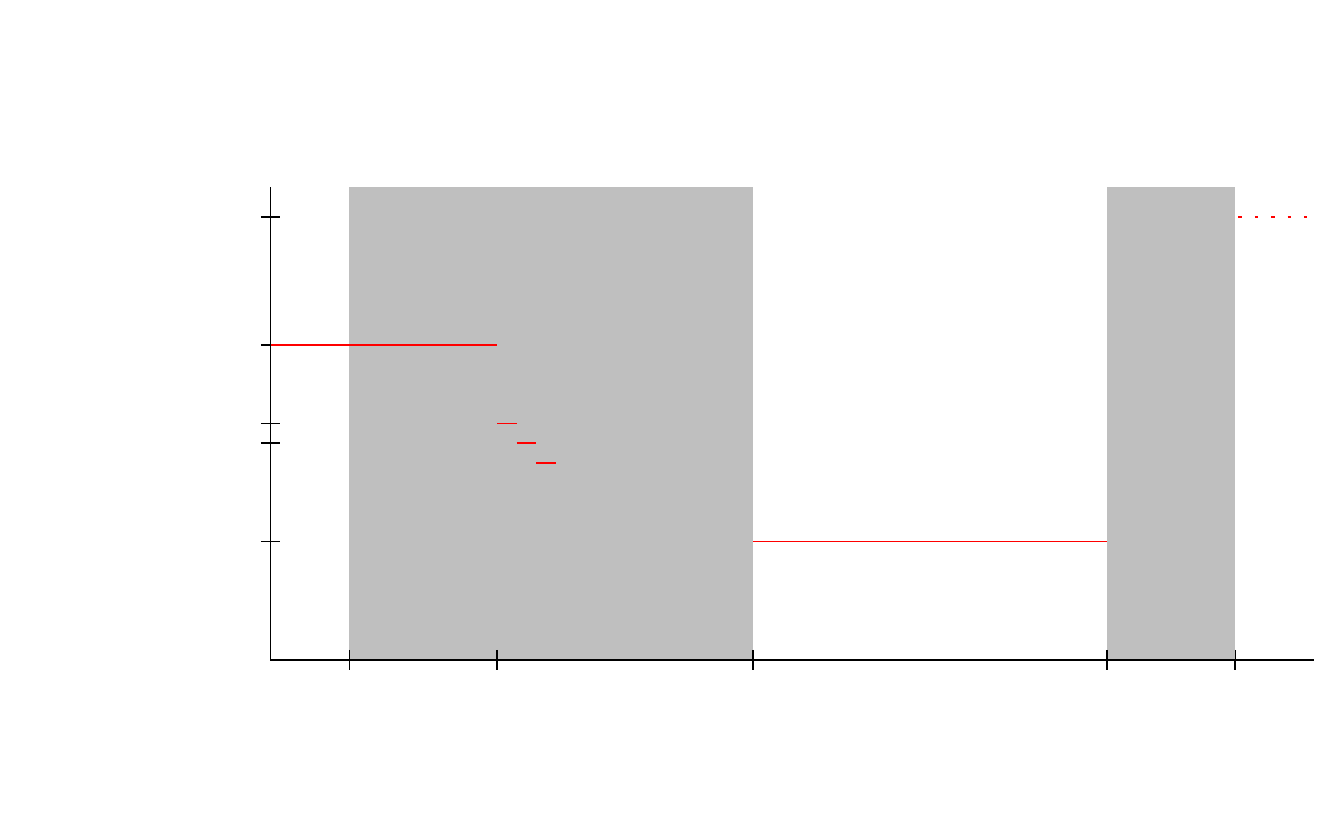_t}
\label{regimes}
\caption{}
\end{figure}

\section{Extremal Examples}

\subsection{Tightness when $\delta_V-\delta_U$ is constant}

As mentioned in the introduction, Zhao determined the optimal minimum degree condition so that $G$ can be tiled with $K_{s,s}$.  If $n$ is an odd multiple of $s$, then $\delta(G)\geq \frac{n}{2}+\frac{3s}{2}-2$ is best possible; however, if $n$ is an even multiple of $s$, then $\delta(G)\geq \frac{n}{2}+s-1$ is best possible.  In Theorem \ref{main 1} and Theorem \ref{main 2} we show that if $\delta_V\geq \delta_U=\Omega(n)$, then $\delta_U+\delta_V\geq n+3s-5$ suffices to give a tiling of $G$ with $K_{s,s}$.  We now give an example which shows that even when $n$ is an even multiple of $s$, we cannot improve the coefficient of the $s$ term in the degree condition.

We will need to use the graphs $P(m,p)$, where $m,p\in\mathbb{N}$, introduced by Zhao in \cite{Z}.

\begin{lemma}\label{No K22}

For all $p\in\mathbb{N}$ there exists $m_0$ such that for all $m\in \mathbb{N}$, $m>m_0$, there exists a balanced bipartite graph, $P(m,p)$, on $2m$ vertices, so that the following hold:
\begin{enumerate}
\item $P(m,p)$ is $p$-regular

\item $P(m,p)$ does not contain a copy of $K_{2,2}$.
\end{enumerate}

\end{lemma}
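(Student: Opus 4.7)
The plan is to give an explicit algebraic construction of $P(m,p)$ via a Sidon set in $\mathbb{Z}_m$. Recall that $S \subseteq \mathbb{Z}_m$ is a \emph{Sidon set} (a $B_2$-set) if the equation $a+b \equiv c+d \pmod m$ with $a,b,c,d \in S$ forces $\{a,b\} = \{c,d\}$ as multisets. A standard greedy argument shows that for each $p\in\mathbb{N}$ there is some $m_0(p)$ such that $\mathbb{Z}_m$ contains a Sidon set of size exactly $p$ whenever $m\geq m_0(p)$: having already chosen $j<p$ elements, a candidate $x$ is forbidden only when it creates a coincidence of the form $x+a\equiv b+c$ or $2x\equiv a+b$, which excludes $O(j^{3})$ residues, so once $m$ exceeds a polynomial in $p$ the greedy process continues up to $|S|=p$.

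Given such a Sidon set $S$, I would define $P(m,p)$ to have vertex parts $U=V=\mathbb{Z}_m$, with $u\in U$ adjacent to $v\in V$ if and only if $v-u\in S$. Each $u\in U$ then has neighborhood $\{u+s:s\in S\}$ and each $v\in V$ has neighborhood $\{v-s:s\in S\}$, both of size $p$, so $P(m,p)$ is $p$-regular.

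For $K_{2,2}$-freeness, suppose distinct $u_1,u_2\in U$ share two distinct common neighbors $v_1,v_2\in V$, and set $s_{ij}:=v_i-u_j\in S$ for $i,j\in\{1,2\}$. Then
\[
s_{11}+s_{22}=(v_1+v_2)-(u_1+u_2)=s_{12}+s_{21},
\]
so the Sidon property gives $\{s_{11},s_{22}\}=\{s_{12},s_{21}\}$ as multisets. If all four values are distinct, either $s_{11}=s_{12}$ (forcing $u_1=u_2$) or $s_{11}=s_{21}$ (forcing $v_1=v_2$); if the multiset has a repeated element, all four $s_{ij}$ must coincide, which again yields $u_1=u_2$. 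Each case contradicts the choice of $K_{2,2}$, so $P(m,p)$ contains none.

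The only genuine ingredient is the existence of a Sidon set of size $p$ in $\mathbb{Z}_m$ for all sufficiently large $m$, which I expect to be the main (though entirely routine) step. Since the lemma demands no quantitative control on $m_0(p)$, the greedy bound suffices; if a sharper bound were desired one could instead invoke Singer's difference set construction in $\mathbb{Z}_{q^{2}+q+1}$ for prime powers $q$, but this refinement is unnecessary.
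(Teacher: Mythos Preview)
Your construction is correct. The paper itself does not prove this lemma; it simply quotes the graphs $P(m,p)$ from Zhao~\cite{Z} and states their properties, so there is no in-paper argument to compare against. The Sidon-set Cayley graph you build is the standard way to realize such graphs, and the greedy existence argument for a Sidon set of size $p$ in $\mathbb{Z}_m$ once $m$ is large (say $m>Cp^3$) is exactly what is needed, since the lemma imposes no quantitative control on $m_0$.

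One small wording issue in your $K_{2,2}$-freeness argument: you write ``if all four values are distinct,'' but the multiset equality $\{s_{11},s_{22}\}=\{s_{12},s_{21}\}$ already rules that out. The intended dichotomy is whether $s_{11}\neq s_{22}$ (in which case the multiset equality forces $s_{11}\in\{s_{12},s_{21}\}$, and either option yields $u_1=u_2$ or $v_1=v_2$) or $s_{11}=s_{22}$ (in which case all four $s_{ij}$ coincide, again giving $u_1=u_2$). The logic is sound; only the phrasing needs adjusting.
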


First we recall Zhao's example which shows that there exist graphs with $\delta_U+\delta_V=n+3s-6$ such that $G$ cannot be tiled with $K_{s,s}$.  Let $G[U, V]$ be a balanced bipartite graph on $2n$ vertices with $n=(2k+1)s$.  Partition $U$ as $U_1\cup U_2$ with $|U_1|=ks+1$, $|U_2|=ks+s-1$ and partition $V$ as $V_1\cup V_2$ with $|V_1|=ks+s-1$, $|V_2|=ks+1$.  Let $G[U_1, V_1]$ and $G[U_2, V_2]$ be complete, let $G[U_1, V_2]\simeq P(ks+1, s-2)$ and let $G[U_2, V_1]\simeq P(ks+s-1, 2s-4)$.

\begin{figure}[ht]
\centering
\input{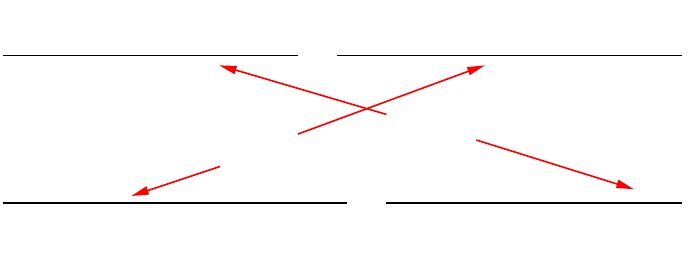_t}
\label{3s-6zhao}
\caption{$m$ is odd and $\delta_U+\delta_V=n+3s-6$}
\end{figure}

We now recall the argument which shows that $G$ cannot be tiled with $K_{s,s}$.  Suppose $G$ can be tiled with $K_{s,s}$ and let $\mathcal{K}$ be such a tiling.  For $F\in \mathcal{K}$ and $i=1,2$, let $X_i(F):=V(F)\cap U_i$, $Y_i(F):=V(F)\cap V_i$ and $\vec{v}(F)=(|X_1(F)|, |X_2(F)|, |Y_1(F)|, |Y_2(F)|)$.  We say $F\in \mathcal{K}$ is \emph{crossing} if $V(F)\cap (U_1\cup V_1)\neq \emptyset$ and $V(F)\cap (U_2\cup V_2)\neq \emptyset$.  We now claim that if $F$ is crossing then $\vec{v}(F)=(s-1,1,s,0)$ or $\vec{v}(F)=(0,s,1,s-1)$.  It is not possible for $X_1(F)\neq \emptyset$ and $Y_2(F)\neq \emptyset$ since $G[U_1, V_2]\simeq P(ks+1, s-2)$ and $G[V_1, U_2]$ is $K_{2,2}$-free.  Thus if $X_1(F)\neq \emptyset$, then $|Y_1(F)|=s$, $|X_2(F)|\leq 1$, and $|X_1(F)|\geq s-1$.  If $Y_2(F)\neq \emptyset$, then $|X_2(F)|=s$, $|Y_1(F)|\leq 1$, and $|Y_2(F)|\geq s-1$.  This shows that if $F$ is crossing then $\vec{v}(F)=(s-1,1,s,0)$ or $\vec{v}(F)=(0,s,1,s-1)$.  Finally, since we are supposing that $G$ can be tiled, there exists some $\ell\in \mathbb{N}$ and some subset $\mathcal{K}'\subseteq \mathcal{K}$ such that every $F\in \mathcal{K}'$ is crossing and $\sum_{F\in \mathcal{K}'}|X_1(F)|=\ell s+1$ and $\sum_{F\in \mathcal{K}'}|Y_1(F)|=\ell s+s-1$.  Let $i_1$ be the number of $F\in \mathcal{K}'$ with $\vec{v}(F)=(s-1,1,s,0)$ and let $i_2$ be the number of $F\in \mathcal{K}'$ with $\vec{v}(F)=(0,s,1,s-1)$.  Then we have
\begin{align*}
\text{(i)} ~~(s-1)i_1=\ell s+1 ~~~\text{ and }~~~ \text{(ii)} ~~si_1+i_2=\ell s+s-1
\end{align*}
Which implies $i_1+i_2=s-2$.  However, (ii) implies that $i_2\geq s-1$, a contradiction.

Now we prove Theorem \ref{meven}.

\begin{proof}
 
%
%

We give two examples of graphs which cannot be tiled with $K_{s,s}$; one when $m$ is even, one $m$ is odd, and both with $\delta_U+\delta_V=n+3s-7$.  

Let $j$ be a non-negative integer and let $m=2k$, where $k$ is sufficiently large.  Let $U$ and $V$ be sets of vertices such that $|U|=|V|=2ks$.  Let $U$ be partitioned as $U=U_1\cup U_2$ and $V$ be partitioned as $V=V_1\cup V_2$ with $|U_1|=(k-j)s+1$, $|U_2|=(k+j)s-1$, $|V_1|=(k-j+1)s-1$ and $|V_2|=(k+j-1)s+1$.  Let $G[U_i, V_i]$ be complete for $i=1,2$.  Let $G[U_1, V_2]$ be the graph obtained from $G[U_1', V_2]\simeq P((k+j)s-s+1, s-2)$ by deleting $(2j-1)s$ vertices from $U_1'$ while maintaining $\delta(V_2, U_1)\geq s-3$ (note that when $s=2$, $\delta(V_2, U_1)=0$).  Let $G[U_2, V_1]$ be the graph obtained from $G[U_2, V_1']\simeq P((k+j)s-1, (2j+1)s-5)$ by deleting $(2j-1)s$ vertices from $V_1'$ while maintaining $\delta(U_2, V_1)\geq (2j+1)s-6$.
We have 
\begin{align*}
\delta_U&=(k-j)s+s-1+s-2=(k-j+2)s-3,\\
\delta_V&=(k+j)s-1+s-3=(k-j)s+1+(2j+1)s-5=(k+j+1)s-4,
\end{align*}
and thus $\delta_U+\delta_V= 2ks+3s-7= n+3s-7.$

\begin{figure}[ht]
\centering
\scalebox{.85}{\subfloat[Case: $m$ even]{\input{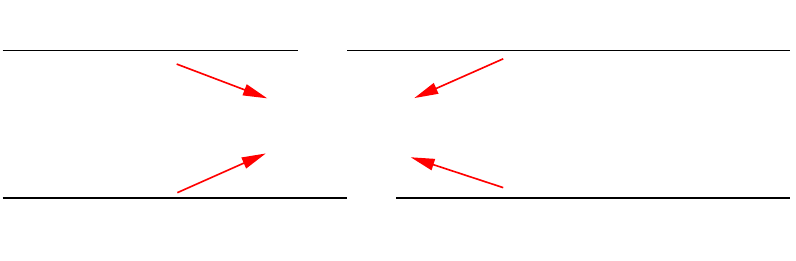_t}
\label{3s-7even}}
}~~~~~
\scalebox{.85}{\subfloat[Case: $m$ odd]{\input{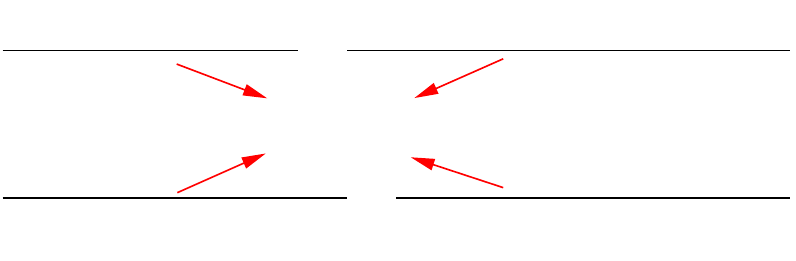_t}
\label{3s-7odd}}
}
\label{3s-7}
\caption[$\delta_U+\delta_V=n+3s-7$]{$\delta_U+\delta_V=n+3s-7$}
\end{figure}

Let $j$ be a non-negative integer and let $m=2k+1$, where $k$ is sufficiently large.  Let $U$ and $V$ be sets of vertices such that $|U|=|V|=(2k+1)s$.  Let $U$ be partitioned as $U=U_1\cup U_2$ and $V$ be partitioned as $V=V_1\cup V_2$ with $|U_1|=(k-j)s+1$, $|U_2|=(k+j)s+s-1$, $|V_1|=(k-j)s+s-1$ and $|V_2|=(k+j)s+1$.  Let $G[U_i, V_i]$ be complete for $i=1,2$.  Let $G[U_1, V_2]$ be the graph obtained from $G[U_1', V_2]\simeq P((k+j)s+1, s-2)$ by deleting $2js$ vertices from $U_1'$ while maintaining $\delta(V_2, U_1)\geq s-3$ (note that when $s=2$, $\delta(V_2, U_1)=0$).  Let $G[U_2, V_1]$ be the graph obtained from $G[U_2, V_1']\simeq P((k+j)s+s-1, (2j+2)s-5)$ by deleting $2js$ vertices from $V_1'$ while maintaining $\delta(U_2, V_1)\geq (2j+2)s-6$.
We have 
\begin{align*}
\delta_U&=(k-j)s+s-1+s-2=(k-j+2)s-3,\\
\delta_V&=(k+j)s+s-1+s-3=(k-j)s+1+(2j+2)s-5=(k+j+2)s-4,
\end{align*}
and thus $\delta_U+\delta_V= (2k+1)s+3s-7= n+3s-7.$

The same analysis given before the start of this proof shows that each of these graphs cannot be tiled with $K_{s,s}$.

\end{proof}

\subsection{Tightness when $\delta_V-\delta_U$ is large}

%
%
%
%
%
%
%
%
%
%
%

Now we prove Theorem \ref{k1smallexample}.

\begin{proof}


\begin{figure}[ht]
\centering
\input{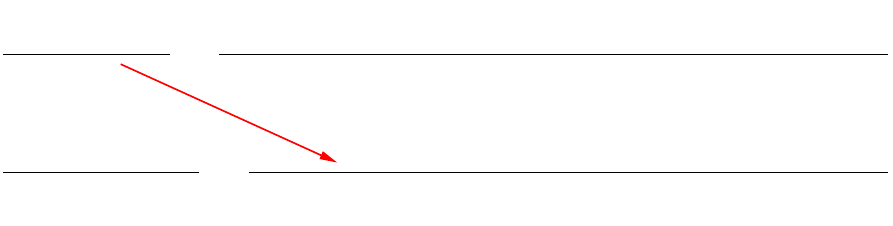_t}
\caption[$\delta_U+\delta_V=n+2s-x-y-1$]{$\delta_U+\delta_V=n+2s-x-y-1$}
\end{figure}

Let $G=(U_1\cup U_2, V_1\cup V_2; E)$ be a bipartite graph with $|U_1|=k_1s+y$, $|U_2|=k_2s-y$, $|V_1|=k_1s+s-1$, $|V_2|=k_2s-s+1$ such that $G[U_1,V_1]$, $G[U_2,V_2]$, and $G[V_1,U_2]$ are complete.  Furthermore suppose $|V_2|\geq (s-x)|U_1|$, every vertex in $U_1$ has $s-x$ neighbors in $V_2$, and for all $u, u'\in U_1$, $(N(u)\cap N(u'))\cap V_2=\emptyset$.  Thus we have $0\leq \delta(V_2,U_1)\leq\Delta(V_2,U_1)\leq 1$ with $\delta(V_2, U_1)=\Delta(V_2, U_1)=1$ only when $|V_2|=(s-x)|U_1|$ and thus
\begin{equation}
\delta_U+\delta_V\geq k_1s+s-1+s-x+k_2s-y=n+2s-(x+y)-1 \label{x+y}
\end{equation}

Every copy of $K_{s,s}$ which touches both $U_1$ and $U_2\cup V_2$ must have one vertex from $U_1$, $s-1$ vertices from $U_2$, at most $s-x$ vertices from $V_2$, and at least $x$ vertices from $V_1$.  So if $xy\geq s$, then $G$ cannot be tiled.   So in order to maximize $\delta_U+\delta_V$ we minimize $x+y$ subject to the condition that $xy\geq s$.  The result is that $x=y=\croot{s}$, unless $1\leq q\leq p$ in which case $x=\croot{s}-1$, $y=\croot{s}$ suffices.  Thus \eqref{x+y} gives $\delta_U+\delta_V= n+2s-2\croot{s}-1$ in general and $\delta_U+\delta_V=n+2s-2\croot{s}$ when $1\leq q\leq p$.

\end{proof}

\section{Non-extremal Case}

In order to prove Theorem \ref{main 1} and Theorem \ref{main 2} we will first prove the following Theorem.  

\begin{theorem}\label{non-extreme}
For every $\alpha>0$ and every positive integer $s$, there exist $\beta>0$ and positive integer $m_1$ such that the following holds for all $n=ms$ with $m\geq m_1$.  Given a bipartite graph $G[U,V]$ with $|U|=|V|=n$, if $\delta_U+\delta_V\geq (1-2\beta)n$, $\delta_V\geq \delta_U\gg \alpha n$ and $\delta_U=k_1s+s+r$ for some $0\leq r\leq s-1$ with $k_1+k_2=m$, then either $G$ can be tiled with $K_{s,s}$, or 
\begin{equation}\label{extremalcondition}
\text{there exist } U_1'\subseteq U,~ V_2'\subseteq V, \text{ such that } |U_1'|=k_1s,~ |V_2'|=k_2s,~ d(U_1',V_2')\leq \alpha.
\end{equation}
\end{theorem}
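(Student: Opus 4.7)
The plan is to follow the standard absorbing-method-plus-regularity framework used in \cite{Z,HS,CD}, deriving the tiling/extremal dichotomy from a matching dichotomy in the reduced bipartite graph.

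\textbf{Setup.} Choose constants $\epsilon \ll d \ll \beta \ll \alpha$. Apply Szemer\'edi's Regularity Lemma to $G$ to produce an $\epsilon$-regular partition into $L$ clusters on each side of common size $t \approx n/L$, and form the bipartite reduced graph $R$ on cluster-vertices $\mathcal{U} \sqcup \mathcal{V}$ whose edges are the $\epsilon$-regular pairs of density at least $d$. A standard edge-count gives $\delta_{\mathcal{U}}(R)+\delta_{\mathcal{V}}(R) \geq (1-3\beta)L$ and $\delta_{\mathcal{U}}(R) \geq (\alpha/2) L$. Let $M$ be a maximum matching in $R$; by K\"onig's theorem there exist $A \subseteq \mathcal{U}$, $B \subseteq \mathcal{V}$ with $|A|+|B|=|M|$ covering every edge of $R$, so no $R$-edge joins $\mathcal{U}\setminus A$ to $\mathcal{V}\setminus B$.

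\textbf{Matching case.} If $M$ is a perfect matching of $R$, I would convert it into a $K_{s,s}$-tiling of $G$ via the standard three-step absorbing argument: (i) reserve a small absorbing family $\mathcal{A}$ of vertex-disjoint $K_{s,s}$-copies such that any sufficiently small balanced leftover set can be absorbed, whose existence follows from routine $K_{s,s}$-swap counting using $\delta_U+\delta_V\geq (1-2\beta)n$; (ii) use iterated K\H{o}v\'ari--S\'os--Tur\'an counting (or the Slicing Lemma) to tile each regular pair of $M$ almost entirely with $K_{s,s}$-copies, leaving only $O(\epsilon t)$ vertices per pair uncovered; (iii) absorb all leftover plus exceptional vertices using $\mathcal{A}$. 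Each of these steps is a direct adaptation of the corresponding step in \cite{Z,HS,CD}, with the only change being that the minimum-degree hypothesis there is replaced by the degree-sum hypothesis in the swap counts.

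\textbf{Extremal case and main obstacle.} If $M$ is not perfect, then $|\mathcal{U}\setminus A|+|\mathcal{V}\setminus B|>L$, and setting $U^* := \bigcup_{X\in\mathcal{U}\setminus A} X$ and $V^* := \bigcup_{Y\in\mathcal{V}\setminus B} Y$ yields $|U^*|+|V^*|\geq n+t$ with $d_G(U^*,V^*)\leq d+\epsilon < \alpha/2$, since only irregular or sub-density pairs contribute across this cut. The conclusion requires producing $U_1'\subseteq U$ and $V_2'\subseteq V$ of the \emph{exact} sizes $k_1 s$ and $k_2 s$ with $d(U_1',V_2')\leq\alpha$. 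The main obstacle is the size alignment: the K\"onig witness gives cluster-sets of sizes dictated by the matching structure of $R$, while the target sizes $k_1 s,k_2 s$ are dictated by the specific value of $\delta_U$. To bridge this, I would use a minimum-degree vertex $u\in U$ (with $\deg(u)=\delta_U$) as a pivot: most such $u$ lie in $U^*$ (otherwise a direct count already gives $|A|\geq k_2$ clusters, producing the desired structure), and for such a $u$ the density bound forces almost all of its $\delta_U=k_1 s+s+r$ neighbors into $V\setminus V^*$, pinning $|V\setminus V^*|$ within $O(\sqrt{\alpha}\,n)$ of $k_1 s$. A final trimming step using the margin in the density bound (using $k_1 s,k_2 s\geq \alpha n/2$ so that $O(t)$ rounding errors are absorbed in the $\alpha$-slack) then yields $U_1',V_2'$ of the exact target sizes with $d(U_1',V_2')\leq\alpha$.
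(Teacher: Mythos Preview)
Your overall architecture---apply the Regularity Lemma, study a maximum matching in the reduced graph, and split into ``perfect matching $\Rightarrow$ tile'' versus ``no perfect matching $\Rightarrow$ extremal''---is exactly the framework of the paper. The K\"onig-based derivation of the extremal case is a legitimate alternative to the paper's augmenting-path argument, and in fact the sizes come out automatically: if $M$ is not perfect and $A\cup B$ is a minimum vertex cover, then any cluster in $\mathcal{U}\setminus A$ has all its $R$-neighbours in $B$, forcing $|B|\geq\delta_{\mathcal U}(R)\geq(\gamma_1-2\beta)L$ and symmetrically $|A|\geq(\gamma_2-2\beta)L$; this already pins $|\mathcal U\setminus A|$ and $|\mathcal V\setminus B|$ to within $O(\beta L)$ of $\gamma_1 L$ and $\gamma_2 L$, so your ``pivot via a minimum-degree vertex'' is unnecessary and the argument you give for it is not quite right (there may be a unique minimum-degree vertex, and nothing forces it to lie in $U^*$).

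The substantive gap is in the matching case. The papers you cite (\cite{Z,HS,CD}) do \emph{not} use the absorbing method; they use the Blow-up Lemma, and the present paper follows them verbatim: make each matched pair $(U_i,V_i)$ super-regular by discarding a few low-degree vertices, insert each exceptional vertex into some pair where it has $\Omega(d\ell)$ neighbours, rebalance cluster sizes modulo $s$ via short paths in the reduced graph, then apply Lemma~\ref{blowup}. Your proposed substitute---a global absorbing family built from ``routine $K_{s,s}$-swap counting using $\delta_U+\delta_V\geq(1-2\beta)n$''---does not go through as stated. A swap-absorber for two vertices $u,u'\in U$ requires at least $s$ common neighbours in $V$, but $\deg(u)+\deg(u')$ can be as small as $2\alpha n<n$, so $N(u)\cap N(u')$ may be empty; the same obstruction hits absorbers for pairs in $V$, since $\delta_V$ need only be about $n/2$. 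The degree-sum hypothesis is simply too weak to guarantee the polynomial lower bounds on absorber counts that the standard absorbing lemma needs. One can salvage an absorbing-style argument by building absorbers \emph{inside} the super-regular pairs of the matching (which is morally what the paper's Steps~1--3 do), but that is no longer the global absorbing method you describe, and at that point you are essentially reproducing the Blow-up Lemma route.
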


If $G$ is a graph for which \eqref{extremalcondition} holds, then we say $G$ satisfies the \emph{extremal condition with parameter $\alpha$}.

\subsection{Regularity and Blow-Up Lemmas}

In this section we review the Regularity and Blow-up Lemmas. Let $\varGamma$ be a simple graph on $n$ vertices. For two disjoint, nonempty subsets $U$ and $V$ of $V(\varGamma)$, define the density of the pair $(U,V)$ as
\[
d(U,V)=\frac{e(U,V)}{|U||V|}.
\]

\begin{definition}
A pair $(U,V)$ is called $\ep$-\emph{regular} if for every $%
U^{\prime }\subseteq U$ with $|U^{\prime }|\geq \ep |U|$ and every $%
V^{\prime }\subseteq V$ with $|V^{\prime }|\geq \ep |V|$, $
|d(U^{\prime },V^{\prime })-d(U,V)|\leq \ep $. The pair $\left(
U,V\right) $ is $( \ep ,\delta ) $-\emph{super-regular}
if it is $\ep $-regular and for all $u\in U$, 
$\deg\left(
u,V\right) \geq \delta \left| V\right| $ and for all $v\in V$, $\deg\left(
v,U\right) \geq \delta \left| U\right| $.
\end{definition}

First we note the following facts that we will need.

\begin{fact}[Intersection Property]\label{interprop}
\label{deg} If $(U,V)$ is an $\ep $-regular pair with density $d$,
then for any $Y\subseteq V$ with $(d-\ep)^{k-1}|Y|\geq \ep |V|$ there are less than
$k\ep |U|^k$ $k$-tuples of vertices $(u_1, u_2, \dots, u_k)$, $u_i\in U$, such that $|Y\cap N(u_1, u_2, \dots, u_k)|\leq (d-\ep)^k|Y|$.
\end{fact}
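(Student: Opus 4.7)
The plan is to prove this by induction on $k$. For the base case $k=1$, the hypothesis reduces to $|Y|\geq \ep|V|$, and I would show that the set $B_1 := \{u \in U : |Y \cap N(u)|\leq (d-\ep)|Y|\}$ has $|B_1| < \ep|U|$. Supposing otherwise, summing the bad degree bound over $u \in B_1$ gives $d(B_1, Y)\leq d-\ep$, while $\ep$-regularity applied to the pair $(B_1, Y)$ (legal because $|B_1|\geq \ep|U|$ and $|Y|\geq \ep|V|$) forces $d(B_1,Y)\geq d(U,V)-\ep = d-\ep$. These two bounds pinch the density to exact equality, which I would rule out by the standard slight perturbation of thresholds.

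For the inductive step, assuming the bound for $k-1$, I would partition the bad $k$-tuples $(u_1,\ldots,u_k)$ (those with $|Y\cap N(u_1,\ldots,u_k)|\leq (d-\ep)^k|Y|$) according to whether the prefix $(u_1,\ldots,u_{k-1})$ is already bad of order $k-1$. The inductive hypothesis limits such prefixes to fewer than $(k-1)\ep|U|^{k-1}$, so with $u_k$ chosen freely these account for fewer than $(k-1)\ep|U|^k$ bad $k$-tuples. For the remaining tuples, set $Y' := Y\cap N(u_1,\ldots,u_{k-1})$, so $|Y'|>(d-\ep)^{k-1}|Y|\geq \ep|V|$, which makes the base case applicable with target $Y'$. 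Since $|Y'\cap N(u_k)|\leq (d-\ep)^k|Y| < (d-\ep)|Y'|$, fewer than $\ep|U|$ vertices $u_k$ qualify for each fixed prefix, contributing fewer than $\ep|U|^k$ additional bad tuples. Summing the two contributions gives strictly fewer than $k\ep|U|^k$ bad tuples, as required.

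The main technical delicacy is the strict-versus-non-strict inequality in the base case, as both $\ep$-regularity and the definition of $B_1$ use $\leq$; this is the usual boilerplate in regularity-based arguments and can be finessed either by perturbing the threshold infinitesimally or by defining $B_1$ with a strict inequality. The inductive step itself is a routine telescoping of the base case and should present no further obstacle.
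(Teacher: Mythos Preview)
The paper does not prove this statement; it is stated as a standard fact about $\ep$-regular pairs and quoted without justification. Your inductive argument is exactly the classical proof of the intersection property: the base case is the usual degree counting via regularity, and the telescoping into ``bad prefix'' versus ``good prefix, bad last coordinate'' is the standard decomposition. So there is nothing to compare against, and your approach is the expected one.

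Your flagging of the strict/non-strict edge case is accurate and honest. With the paper's definitions (regularity with $\leq\ep$ and bad tuples with $\leq$), the base case as written only yields $|B_1|\leq\ep|U|$ rather than a strict inequality, and this would cascade to give at most $k\ep|U|^k$ rather than strictly fewer. In practice this is harmless, since every invocation of the fact in the paper uses it with substantial slack, but if you want a self-contained proof matching the stated strict bound you should either (a) observe that if $|B_1|=\ep|U|$ exactly then equality in the averaging forces every $u\in B_1$ to have \emph{exactly} $(d-\ep)|Y|$ neighbours in $Y$, and then add one more vertex of $U\setminus B_1$ to force the density of the enlarged set strictly above $d-\ep$ while still within the regularity window, or (b) simply note that the statement as used is insensitive to replacing $\leq$ by $<$ in the definition of bad tuples, which makes the base case strict directly. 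Either patch is routine; the substance of your argument is correct.
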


\begin{fact}[Slicing Lemma]\label{slicing}
\label{slice} Let $(U,V)$ be an $\ep $-regular pair with density $d$, and for some $\lambda >\ep $ let $U^{\prime }\subseteq U$, $V^{\prime }\subseteq V$, with $|U^{\prime }|\geq \lambda |U|$, $|V^{\prime }|
\geq \lambda |V|$. Then $(U^{\prime },V^{\prime })$ is an $\ep
^{\prime }$-regular pair of density $d^{\prime }$ where $\ep
^{\prime }=\max \{\frac{\ep }{\lambda },2\ep \}$ and $
d^{\prime}\ge d-\ep $.
\end{fact}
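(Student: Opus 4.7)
The plan is to deduce both assertions (the density bound and the $\epsilon'$-regularity) directly from the hypothesis that $(U,V)$ itself is $\epsilon$-regular, by choosing test subsets carefully so that the original regularity condition applies.

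First I would handle the density bound $d' \geq d-\epsilon$. Since $\lambda > \epsilon$, the subsets $U' \subseteq U$ and $V' \subseteq V$ satisfy $|U'| \geq \lambda|U| \geq \epsilon|U|$ and $|V'| \geq \lambda|V| \geq \epsilon|V|$. Hence they qualify as test sets in the definition of $\epsilon$-regularity of $(U,V)$, which gives $|d(U',V') - d| \leq \epsilon$. In particular $d(U',V') \geq d - \epsilon$, which is exactly the claimed bound on $d'$.

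Next I would verify $\epsilon'$-regularity of $(U',V')$. Pick arbitrary $U'' \subseteq U'$ and $V'' \subseteq V'$ with $|U''| \geq \epsilon' |U'|$ and $|V''| \geq \epsilon' |V'|$. The whole point of taking $\epsilon' \geq \epsilon/\lambda$ is to ensure that
\[
|U''| \geq \epsilon' |U'| \geq \frac{\epsilon}{\lambda} \cdot \lambda|U| = \epsilon |U|,
\]
and similarly $|V''| \geq \epsilon|V|$. So $(U'',V'')$ also qualifies as a test pair inside $(U,V)$, and $\epsilon$-regularity yields $|d(U'',V'') - d| \leq \epsilon$. Combining this with $|d(U',V') - d| \leq \epsilon$ via the triangle inequality gives $|d(U'',V'') - d(U',V')| \leq 2\epsilon \leq \epsilon'$, which is the required estimate.

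This is really a soft lemma — the only minor subtlety is calibrating $\epsilon'$ so that the two constraints (translating subset sizes from $(U',V')$ to $(U,V)$, and absorbing the $2\epsilon$ slack from the triangle inequality) are both met, which is precisely why the definition takes $\epsilon' = \max\{\epsilon/\lambda,\, 2\epsilon\}$. There is no real obstacle; the statement is a direct unpacking of the definition of $\epsilon$-regularity applied to the inclusions $U'' \subseteq U' \subseteq U$ and $V'' \subseteq V' \subseteq V$.
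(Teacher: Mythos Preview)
Your proof is correct and is exactly the standard argument for the Slicing Lemma. The paper itself does not prove this statement --- it is recorded as a background Fact without proof --- so there is nothing to compare against beyond noting that your argument is the canonical one.
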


Our main tool in the proof will be the Regularity Lemma of Szemer\'{e}di \cite{Sz} which we state in its multipartite form.

\begin{lemma}[Regularity Lemma - Bipartite Version]\label{bireg} 
For every $\ep>0$ there exists $M:=M(\ep)$ such that if $G:=G[U,V]$ is a balanced bipartite graph on $2n$ vertices and $d\in[0,1]$, then there is a partition of $U$ into clusters $U_0, U_1,\dots, U_t$, a partition of $V$ into clusters $V_0, V_1,\dots, V_t$, and a subgraph $G':=G'[U,V]$ with the following properties: 

\begin{enumerate}
\item $t\leq M$,
\item $|U_0|\leq \ep n$, $|V_0|\leq \ep n$,
\item $|U_i|=|V_i|=\ell\leq \ep n$ for all $i\in [t]$,
\item $\deg_{G'}(x)>\deg_G(x)-(d+\ep)n$ for all $x\in V(G)$,
\item All pairs $(U_i, V_i)$, $i,j\in [t]$, are $\ep$-regular in $G'$ each with density either $0$ or exceeding $d$.

\end{enumerate}
\end{lemma}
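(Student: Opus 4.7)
The statement is the bipartite form of Szemer\'edi's Regularity Lemma, and the plan is to derive it from the standard (non-partite) version rather than re-prove regularity from scratch. First I would apply the ordinary Szemer\'edi Regularity Lemma to $G$ viewed as a graph on $2n$ vertices, with parameter $\ep' := \ep/8$ and density parameter $d$, obtaining an equipartition $W_0, W_1, \dots, W_k$ of $V(G)$ with $|W_0| \le \ep'(2n)$, $|W_i| = \ell'$ for $i \ge 1$, $k \le M_0(\ep')$, and a subgraph $\widetilde{G}$ in which every surviving pair $(W_i, W_j)$ is $\ep'$-regular with density either $0$ or exceeding $d$, and in which $\deg_{\widetilde{G}}(x) > \deg_G(x) - (d+\ep')\cdot 2n$.

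Next I would refine this partition to respect the bipartition $(U,V)$. For each $i \ge 1$ split $W_i = W_i^U \cup W_i^V$ where $W_i^U = W_i \cap U$ and $W_i^V = W_i \cap V$. If $|W_i^U| < \ep' |W_i|$, move $W_i^U$ into the exceptional set on the $U$ side; similarly for $W_i^V$. By the Slicing Lemma (Fact \ref{slicing}), any surviving pair $(W_i^U, W_j^V)$ inherits $\ep$-regularity from $(W_i, W_j)$ (with parameter $\max\{\ep'/\ep', 2\ep'\} = 2\ep' < \ep$) and its density is within $\ep'$ of the original. The cost of these moves is at most a $2\ep'$ fraction of each $|W_i|$, so the $U$- and $V$-side exceptional sets remain of size at most $\ep n / 2$, say.

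Now I would equalize cluster sizes. Choose a common cluster size $\ell \le \min_{i,j}\{|W_i^U|, |W_j^V|\}$, and trim each refined part down to size $\ell$, sending the leftover vertices to the exceptional sets; because the original equipartition gave $|W_i|$ uniform and $\ep' \ll \ep$, the total amount trimmed stays within the $\ep n$ budget for $|U_0|$ and $|V_0|$. Relabel the refined clusters as $U_1, \dots, U_t$ and $V_1, \dots, V_t$ (these counts agree by construction). Finally, define $G'$ to be $\widetilde{G}$ with the edges of any pair $(U_i, V_j)$ that is not $\ep$-regular, or whose density does not exceed $d$, removed. Each vertex loses at most $(d+\ep')\ell$ edges to each such bad pair, and summing over at most $t$ partners plus contributions from exceptional vertices yields the claimed degree bound $\deg_{G'}(x) > \deg_G(x) - (d+\ep)n$.

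The main obstacle is the bookkeeping in the equalization step: one must make sure the refined $U$-side and $V$-side partitions have the same number of parts of the same common size $\ell$ while keeping $|U_0|, |V_0| \le \ep n$. This is handled by choosing $\ep'$ sufficiently smaller than $\ep$ so that all the losses (vertices lost to the exceptional set when $|W_i \cap U|$ or $|W_i \cap V|$ is small, vertices trimmed to match sizes, and vertices already in the original $W_0$) comfortably fit inside the $\ep n$ allowance; the slicing lemma then guarantees that passing from $(W_i, W_j)$ to $(U_{i'}, V_{j'})$ preserves regularity with the weaker parameter $\ep$.
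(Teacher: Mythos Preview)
The paper does not prove this lemma; it is simply quoted as the bipartite degree form of Szemer\'edi's Regularity Lemma with a reference to \cite{Sz}, so there is no proof in the paper to compare against. Your strategy of deducing it from the ordinary Regularity Lemma is standard and can be made to work, but the sketch as written has two real gaps.

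First, the assertion that the numbers of $U$-clusters and $V$-clusters ``agree by construction'' is unjustified: after splitting each $W_i$ into $W_i\cap U$ and $W_i\cap V$ and discarding the small pieces, the two sides can have different numbers of surviving parts (some $W_i$ may lie almost entirely in $U$, others almost entirely in $V$), and merely trimming all parts to a common size $\ell$ does not change the counts. You need a further subdivision of every surviving piece into blocks of some much smaller common size, after which both sides have $\lfloor n/\ell\rfloor$ blocks up to a negligible remainder that goes to $U_0$, $V_0$. Second, your degree bookkeeping is wrong. It is not true that ``each vertex loses at most $(d+\ep')\ell$ edges to each such bad pair'': a single vertex in $U_i$ can have up to $\ell$ neighbours in a sparse partner cluster $V_j$, and $d\ell$ is only the average loss over $U_i$. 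More seriously, invoking the degree form on a graph with $2n$ vertices yields $\deg_{\widetilde G}(x)>\deg_G(x)-(d+\ep')\cdot 2n$, and one cannot get from $2(d+\ep')n$ down to $(d+\ep)n$ just by shrinking $\ep'$ when $d$ is fixed. To recover the stated bound you must exploit the bipartite structure directly in the accounting (a vertex of $U$ has neighbours only in $V$, so its losses to the exceptional set, to irregular pairs, and to sparse pairs are all naturally measured against $n$ rather than $2n$), and you must handle the per-vertex sparse-pair loss by the usual device of moving the few atypical vertices into $U_0$ or $V_0$.
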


In addition, we will use the Blow-up Lemma of Koml\'{o}s, S\'{a}rk\"{o}zy, and Szemer\'{e}di \cite{KSSbu}.

\begin{lemma}[Blow-up Lemma]\label{blowup}
Given $\delta >0$, $\Delta >0$ there exists $\ep >0$ such that the following holds. Let $(U, V)$ be an $(\ep ,\delta )$-super-regular pair. 
If $T$ is a $U',V'$-bigraph with maximum degree $\Delta (T)\leq\Delta $ and $T$ is embeddable into the complete bipartite graph $K_{|U|},{|V|}$ then it is also embeddable into $(U,V)$.
\end{lemma}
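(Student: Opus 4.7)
The plan is to prove the Blow-up Lemma by a randomized greedy embedding followed by a deferred repair step via Hall's theorem, which is the approach of Koml\'os, S\'ark\"ozy, and Szemer\'edi. First I would fix auxiliary parameters with $\epsilon \ll \eta \ll \delta, 1/\Delta$ and set up the embedding framework as follows. The vertex set $U'\cup V'$ of $T$ is partitioned into a large \emph{main} portion and a small \emph{buffer} portion $B$ of size roughly $\eta|U'|$ on one side and $\eta|V'|$ on the other, chosen so that $B$ is independent in $T$. Since $\Delta(T)\le \Delta$, such a $B$ exists: take a random subset of density $2\eta/(\Delta+1)$ on each side and greedily delete one endpoint of every $T$-edge within it; what remains has the correct size up to constants. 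Main vertices are then to be embedded in a uniformly random order into $U$ and $V$, and buffer vertices are reserved for the final Hall step.

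For each unembedded vertex $t\in V(T)$ I maintain a \emph{candidate set} $C(t)$: the unused vertices of the appropriate side of $(U,V)$ that are $G$-adjacent to every already-embedded $T$-neighbor of $t$. When embedding a main vertex $t$, I choose its image uniformly at random from $C(t)$. The Intersection Property (Fact \ref{interprop}) ensures that, so long as no more than $\Delta$ neighbors of $t$ have already been embedded, $|C(t)|$ starts out as at least $(\delta-\epsilon)^{\Delta}|U|$ (or $|V|$), which greatly exceeds $\epsilon|U|$. To keep control over $|C(t)|$ throughout the process I would maintain, for every unembedded vertex $t$, a running estimate of $|C(t)|$ relative to its "ideal" value; whenever a buffer vertex's candidate set threatens to drop below a critical fraction of $|U|$ or $|V|$, I perform a \emph{swap}: exchange the image of the main vertex causing the damage with a previously embedded main vertex whose candidate sets are easier to rebalance. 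Using an Azuma--Hoeffding martingale argument on the sequence of uniformly random choices, one shows that except on an exponentially small event only a tiny fraction of vertices ever require such swaps, and the swap operations themselves preserve validity of the partial embedding.

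After all main vertices have been embedded, an $\eta$-fraction of each cluster remains unused, and these leftover vertices must host the buffer vertices. Since $B$ is independent in $T$, there are no constraints among buffer vertices, so the remaining task is to find, for each buffer vertex $t$, a distinct image in $C(t)$ intersected with the set of unused vertices. This is a system of distinct representatives, solvable by Hall's theorem. The super-regularity hypothesis enters decisively here: the absolute minimum-degree condition $\deg(u,V)\ge \delta|V|$ (and symmetrically) ensures that even at the end of the random phase, $|C(t)|$ for every buffer vertex $t$ is a proper constant fraction of the remaining cluster. Combined with $\epsilon$-regularity applied to the remaining (still large) unused sets, Hall's condition is verified on every subset, completing the embedding.

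The main obstacle in executing this plan is the probabilistic tracking of the candidate sets across $\Theta(n)$ rounds of random choices: $|C(t)|$ depends on the entire history of the embedding, and one must show that its expected shrinkage per step is very close to the ideal factor $d$ from regularity, with concentration tight enough that the compounded error across $\Delta$ embedded neighbors does not destroy the $(\delta-\epsilon)^{\Delta}$ lower bound. Calibrating $\epsilon$ small with respect to $\eta,\delta,\Delta$ so that (i) the greedy phase succeeds with overwhelming probability, (ii) the swap bookkeeping never runs out of candidates, and (iii) Hall's condition survives the final phase is the technical heart of the argument.
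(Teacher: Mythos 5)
The paper does not prove this lemma; it is imported as a black box with a citation to Koml\'os, S\'ark\"ozy, and Szemer\'edi \cite{KSSbu}, so there is no in-paper argument to compare against. Your sketch is a faithful high-level outline of the actual KSS proof: randomized greedy embedding of the ``main'' vertices via candidate sets, a small independent buffer set held back, and a final system-of-distinct-representatives step driven by super-regularity and Hall's theorem. That is the right decomposition of the argument, and you correctly identify super-regularity (as opposed to mere regularity) as the ingredient that makes Hall's condition verifiable at the end.

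However, as written this is a plan, not a proof, and the gap is precisely where you flag it. Three things are asserted but not established. (1) The concentration claim: you invoke Azuma--Hoeffding to say the candidate sets $|C(t)|$ track their expectations, but the martingale is with respect to a filtration where the increments are not bounded independently of the history, and the actual KSS argument requires a more careful accounting (each embedded neighbor multiplies $|C(t)|$ by roughly $d$, and the compounded multiplicative error over $\Delta$ neighbors must stay within the $(\delta-\ep)^\Delta$ budget). Stating ``one shows'' does not discharge this. (2) The swap/exception mechanism is described as ``exchange the image of the main vertex causing the damage with a previously embedded main vertex whose candidate sets are easier to rebalance,'' but you give no criterion for which vertex to swap with, no argument that a suitable partner exists, and no proof that swaps terminate or that they do not themselves degrade other vertices' candidate sets. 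In KSS this is handled by a queue of exceptional vertices with an explicit counting argument bounding its size. (3) The Hall verification at the end is asserted to follow from super-regularity plus $\ep$-regularity on the unused sets, but the unused sets have size only $\eta|U|$, and $\ep$-regularity gives useful information only on sets of size at least $\ep|U|$; you need $\ep\ll\eta$ and an explicit bound showing every buffer vertex still sees a $\gamma\eta|U|$-sized candidate set among the leftovers, and then a defect-free Hall check using regularity of the leftover pair. None of these are errors in the plan, but they are the substance of the theorem, and the proposal defers all of them. As it stands this would not be accepted as a proof of the lemma.
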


\subsection{Proof of Theorem \ref{non-extreme}}

Here we prove Theorem \ref{non-extreme}.  We show that if $G$ is not in the extremal case, we obtain a tiling with $K_{s,s}$; otherwise $G$ is in the extremal case which we deal with in Section \ref{extremesection}.  The proof is adopted from Zhao \cite{Z}.

\begin{proof}Let $\ep$, $d$, and $\beta$ be positive real numbers such that $$\ep\ll d\ll \beta\ll \alpha$$ and suppose $n$ is large.  Let $G[U,V]$ be a bipartite graph with $|U|=|V|=n$,  $\delta_U+\delta_V\geq (1-\beta)n$, and $\delta_V\geq \delta_U\gg \alpha n$.  We also have $\delta_U=k_1s+s+r$ for some $0\leq r\leq s-1$ and we set $k_2:=m-k_1$.  Let $\gamma_1, \gamma_2$ be positive real numbers such that $\delta_U\geq (\gamma_1-\beta)n$, $\delta_V\geq (\gamma_2-\beta) n$ and $\gamma_1+\gamma_2=1$.  Note that $\gamma_2\geq \gamma_1\gg \alpha$.  We apply Lemma \ref{bireg} to $G$ with parameters $\ep$ and $d$.  We obtain a partition of $U$ into $U_0, U_1,\dots, U_t$ and $V$ into $V_0, V_1,\dots, V_t$ such that $|U_i|=|V_i|=\ell\leq \ep n$ for all $i\in [t]$ and $|U_0|=|V_0|\leq \ep n$.  In the graph $G'$ from Lemma \ref{bireg}, we have $(U_i, V_j)$, is $\ep$-regular with density either $0$ or exceeding $d$ for all $i,j\in [t]$.  We also have $\deg_{G'}(u)>(\gamma_1-\beta)n-(\ep+d)n$ for $u\in U$ and $\deg_{G'}(v)>(\gamma_2-\beta)n-(\ep+d)n$ for $v\in V$.

We now consider the \emph{reduced graph} of $G'$.  Let $G_r$ be a bipartite graph with parts $\mathcal{U}:=\{U_1, \dots, U_t\}$ and $\mathcal{V}:=\{V_1,\dots, V_t\}$ such that $U_i$ is adjacent to $V_j$, denoted $U_i\sim V_j$, if and only if $(U_i, V_j)$ is an $\ep$-regular pair with density exceeding $d$.  A standard calculation gives the following degree condition in the reduced graph, $\delta_\mathcal{U}\geq (\gamma_1-2\beta)t$ and $\delta_\mathcal{V}\geq (\gamma_2-2\beta)t$.

\begin{claim}\label{split}
If $G_r$ contains two subsets $X\subseteq \mathcal{U}$ and $Y\subseteq \mathcal{V}$ such that $|X|\geq (\gamma_1-3\beta)t$, $|Y|\geq (\gamma_2-3\beta)t$ and there are no edges between $X$ and $Y$, then \eqref{extremalcondition} holds in $G$.
\end{claim}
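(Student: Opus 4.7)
The plan is to push the non-edge structure in the reduced graph back to $G$, obtaining a nearly-empty bipartite pair of the required sizes. Set $\tilde X := \bigcup_{U_i\in X}U_i \subseteq U$ and $\tilde Y := \bigcup_{V_j\in Y}V_j \subseteq V$; these will, after a small size correction, serve as $U_1'$ and $V_2'$. My first step would be to verify that $|\tilde X|$ and $|\tilde Y|$ are within $O(\beta n)$ of $k_1 s$ and $k_2 s$ respectively. The lower bound on $|\tilde X|$ follows from $|X|\ge(\gamma_1-3\beta)t$ together with $t\ell\ge(1-\ep)n$. For the matching upper bound, observe that every vertex of $Y$ has at least $(\gamma_2-2\beta)t$ neighbors in $\mathcal U$ by the reduced-graph degree condition, and all of these must lie in $\mathcal U\setminus X$, so $|X|\le(\gamma_1+2\beta)t$. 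Since one may take $\gamma_1 n\in[\delta_U,\delta_U+\beta n]$ and $\gamma_2 n = n-\gamma_1 n$, both $\gamma_1 n$ and $\gamma_2 n$ are within $O(\beta n + s)$ of $k_1s=\delta_U-s-r$ and $k_2s = n-k_1 s$ respectively.

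The second step is to bound $e_G(\tilde X,\tilde Y)$. Since no edge of $G_r$ runs between $X$ and $Y$, every pair $(U_i,V_j)$ with $U_i\in X$, $V_j\in Y$ has density exactly $0$ in $G'$, so $e_{G'}(\tilde X,\tilde Y)=0$. Using $\deg_G(u)-\deg_{G'}(u)\le(d+\ep)n$ for every $u\in V(G)$, one gets
\[
e_G(\tilde X,\tilde Y)\;\le\;\sum_{u\in\tilde X}\bigl(\deg_G(u)-\deg_{G'}(u)\bigr)\;\le\;(d+\ep)n\cdot|\tilde X|\;\le\;(d+\ep)n^2.
\]

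In the third step I would modify $\tilde X$ and $\tilde Y$ by adding or deleting at most $O(\beta n)$ vertices to produce $U_1'\subseteq U$ and $V_2'\subseteq V$ with $|U_1'|=k_1 s$ and $|V_2'|=k_2 s$ exactly. Each such modification contributes at most $n$ edges to the count between $U_1'$ and $V_2'$, so
\[
e_G(U_1',V_2')\;\le\;e_G(\tilde X,\tilde Y)+O(\beta n^2)\;\le\;O\bigl((d+\beta+\ep)n^2\bigr).
\]
Since $\delta_U\gg\alpha n$ forces $\gamma_1\gg\alpha$, and $\gamma_2\ge 1/2$, we have $k_1 s\cdot k_2 s \ge (\gamma_1\gamma_2/2)\,n^2 \gg \alpha n^2$, whence
\[
d(U_1',V_2')\;=\;\frac{e_G(U_1',V_2')}{k_1 s \cdot k_2 s}\;\le\;\frac{O(d+\beta+\ep)}{\gamma_1\gamma_2}\;\le\;\alpha,
\]
using the hierarchy $\ep\ll d\ll\beta\ll\alpha$ (with $\beta$ small enough relative to $\alpha$ and to the implicit lower bound on $\gamma_1$). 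This verifies \eqref{extremalcondition}. The only real obstacle is bookkeeping: aligning the approximate sizes from the regular partition with the exact targets $k_1 s$ and $k_2 s$ while keeping the final density below $\alpha$. All error terms are absorbed by the parameter hierarchy, so no additional ideas are needed.
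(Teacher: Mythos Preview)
Your proof is correct and follows essentially the same approach as the paper's: take the union of clusters in $X$ and $Y$, use that no reduced-graph edges implies $e_{G'}(\tilde X,\tilde Y)=0$, bound the edges lost in passing from $G$ to $G'$, and then adjust sizes by $O(\beta n)$ vertices to hit $k_1s$ and $k_2s$ exactly. The one minor difference is that the paper first shrinks $X$ and $Y$ to have sizes exactly $(\gamma_1-3\beta)t$ and $(\gamma_2-3\beta)t$, so that $|\tilde X|\le k_1s$ and $|\tilde Y|\le k_2s$ automatically and only additions are needed; this sidesteps your separate upper-bound argument on $|X|$ via the reduced-graph degree of $Y$, though your argument is perfectly valid.
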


\begin{proof}
Without loss of generality, assume that $|X|=(\gamma_1-3\beta)t$ and $|Y|=(\gamma_2-3\beta)t$.  Let $U'=\cup_{U_i\in X}U_i$ and $V'=\cup_{V_i\in Y}V_i$.  We have 
$$(\gamma_1-4\beta)n<(\gamma_1-3\beta)t\ell=|X|\ell=|U'|\leq (\gamma_1-3\beta)n$$
and
$$(\gamma_2-4\beta)n<(\gamma_2-3\beta)t\ell=|Y|\ell=|V'|\leq (\gamma_2-3\beta)n.$$
Since there is no edge between $X$ and $Y$ we have $e_{G'}(U',V')=0$.  Consequently $e_G(U', V')\leq e_{G'}(U', V')+d|U'||V'|+2\ep n|U'|<dk_1sk_2s$.  By adding at most $4\beta k_1s$ vertices to $U'$ and $4\beta k_2s$ vertices to $V'$, we obtain two subsets of size $k_1s$ and $k_2s$ respectively, with at most $dk_1sk_2s+4\beta k_1sk_2s+4\beta k_1sk_2s<\alpha k_1sk_2s$ edges, and thus \eqref{extremalcondition} holds in $G$.
\end{proof}

For the rest of this proof, we suppose that \eqref{extremalcondition} does not hold in $G$.

\begin{claim}\label{matching}
$G_r$ contains a perfect matching.
\end{claim}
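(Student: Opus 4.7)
The plan is to invoke Hall's theorem and derive a contradiction via Claim \ref{split}. Assume for contradiction that $G_r$ has no perfect matching. Since $|\mathcal{U}|=|\mathcal{V}|=t$, Hall's theorem yields a (necessarily nonempty) set $S\subseteq \mathcal{U}$ with $|N_{G_r}(S)|<|S|$. The goal is to upgrade this Hall defect into a pair of large sets on opposite sides of $G_r$ with no edges between them, which is exactly the hypothesis of Claim \ref{split}.

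I would then squeeze $|S|$ from both sides using the two minimum-degree conditions in the reduced graph. Any single vertex of $\mathcal{U}$ has at least $(\gamma_1-2\beta)t$ neighbors, so $|N_{G_r}(S)|\geq(\gamma_1-2\beta)t$, forcing $|S|>(\gamma_1-2\beta)t$. Setting $T:=\mathcal{V}\setminus N_{G_r}(S)$, by construction there are no edges between $S$ and $T$ in $G_r$, and $|T|\geq t-|S|+1>0$. Picking any $v\in T$, all of $v$'s at least $(\gamma_2-2\beta)t$ neighbors lie in $\mathcal{U}\setminus S$, giving $|S|\leq(\gamma_1+2\beta)t$ and hence $|T|>t-|S|\geq(\gamma_2-2\beta)t$.

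I would finish by applying Claim \ref{split} with $X=S$ and $Y=T$: the bounds $|S|>(\gamma_1-3\beta)t$ and $|T|>(\gamma_2-3\beta)t$ hold with room to spare, and $S,T$ are non-adjacent in $G_r$. The claim then asserts that \eqref{extremalcondition} holds in $G$, contradicting the standing hypothesis that \eqref{extremalcondition} fails. The only point requiring care is the two-sided bookkeeping that produces sufficiently large sets on \emph{both} sides for Claim \ref{split}; one inequality is automatic from Hall's defect and the minimum degree on $\mathcal{U}$, and the other comes from applying the minimum degree on $\mathcal{V}$ to a vertex of $T$. With $\gamma_1+\gamma_2=1$ and $\beta\ll\alpha$ there is no deeper obstacle.
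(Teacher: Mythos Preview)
Your proof is correct. Both your argument and the paper's ultimately invoke Claim~\ref{split} to reach a contradiction, but the route is different. The paper takes a maximum matching $M$, picks unsaturated vertices $x\in\mathcal{U}$ and $y\in\mathcal{V}$, and tries to build an augmenting path: if the index sets $I=\{i:V_i\in N(x)\}$ and $J=\{j:U_j\in N(y)\}$ intersect one gets a length-$3$ augmenting path, and if they are disjoint then $|I|\geq(\gamma_1-2\beta)t$, $|J|\geq(\gamma_2-2\beta)t$ forces (via Claim~\ref{split}) an edge between $\{U_i:i\in I\}$ and $\{V_j:j\in J\}$, producing a length-$5$ augmenting path. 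Your approach sidesteps the augmenting-path machinery entirely: the Hall-violating set $S$ and its complement $T=\mathcal{V}\setminus N_{G_r}(S)$ are themselves the large non-adjacent sets needed for Claim~\ref{split}, so the contradiction comes in one shot with no case split. Your version is shorter; the paper's version is perhaps more in keeping with the augmenting-path manipulations used later in Step~3 of the same proof, but there is no mathematical advantage to either.
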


\begin{proof}
Let $M$ be a maximum matching of $G_r$. After relabeling indices if necessary, we may assume that
$M = \{U_iV_i : i\in[k], k\leq t\}$. If $M$ is not perfect, let $x\in \mathcal{U}$ and $y \in \mathcal{V}$ be vertices which are unsaturated by $M$. Then the neighborhood $N(x)$ is a subset of $V(M)$, otherwise we can enlarge $M$ by
adding an edge $xz$ for any $z \in N(x)\setminus V(M)$. We have $N(y) \subseteq V (M)$ for the same reason.
Now let $I = \{i : V_i \in N(x)\}$ and $J = \{j : U_j\in N(y)\}$. If $I \cap J\neq \emptyset$; that is, there exists $i$
such $xV_i$ and $yU_i$ are both edges, then we can obtain a larger matching by replacing $U_iV_i$ in
$M$ by $xV_i$ and $yU_i$. Otherwise, assume that $I \cap J = \emptyset$. Since $|I|\geq (\gamma_1-2\beta)t$ and $|J|\geq (\gamma_2-2\beta)t$
and \eqref{extremalcondition} does not hold in $G$, then by the contrapositive of Claim \ref{split} there exists an edge
between $\{U_i : i \in I\}$ and $\{V_j : j \in J\}$. This implies that there exist $i \neq j$ such that $xV_i$,
$U_iV_j$, and $yU_j$ are edges. Replacing $U_iV_i$, $U_jV_j$ in $M$ by $xV_i$, $U_iV_j$ and $yU_j$, we obtain a larger
matching, contradicting the maximality of $M$.

\end{proof}

By Claim \ref{matching} we assume that $U_i\sim V_i$ for all $i\in [t]$.  If each $\ep$-regular pair $(U_i, V_i)$ is also super-regular and $s$ divides $\ell$, then the Blow-up Lemma (Lemma \ref{blowup}) guarantees that $G'[U_i, V_i]$ can be tiled with $K_{s,s}$ (since $K_{\ell,\ell}$ can be tiled with $K_{s,s}$). If we also know that $U_0=V_0=\emptyset$, then we obtain a $K_{s,s}$-tiling of $G$. Otherwise we do the following steps (details of these steps are given next). \emph{Step 1}: For each $i\geq 1$, we move vertices from $U_i$ to $U_0$ and from $V_i$ to $V_0$ so that each remaining vertex in $(U_i, V_i)$ has at least $(d-2\ep)\ell$ neighbors. \emph{Step 2:} We eliminate $U_0$ and $V_0$ by removing copies of $K_{s,s}$, each of which contains at most one vertex of $U_0\cup V_0$. \emph{Step 3}: We make sure that for each $i\geq 1$, $|U_i|=|V_i| > (1 - d)\ell$ and $|U_i|$ is divisible by
$s$. Finally we apply the Blow-up Lemma to each $(U_i, V_i)$ (which is still super-regular) to finish the proof. Note that we always refer to the clusters as $U_i, V_i, i \geq 0$ even though they may gain or lose vertices during the process.

\emph{Step 1.} For each $i \geq 1$, we remove all $u\in U_i$ such that $\deg(u, V_i) < (d-\ep)\ell$ and all
$v\in V_i$ such that $\deg(v, U_i) < (d-\ep)\ell$. Fact \ref{interprop} (with $k=1$) guarantees that the
number of removed vertices is at most $\ep\ell$. We then remove more vertices from either $U_i$
or $V_i$ to make sure $U_i$ and $V_i$ still have the same number of vertices. All removed vertices
are added to $U_0$ and $V_0$. As a result, we have $|U_0|=|V_0|\leq 2\ep n$.

\emph{Step 2.} This step implies that a vertex in $U_0, V_0$ can be viewed as a vertex in $U_i$ or $V_i$ for
some $i\geq 1$. For a vertex $x \in V(G)$ and a cluster $C$, we say $x$ is adjacent to $C$, denoted $x \sim C$, if
$\deg_G(x,C)\geq d\ell$. We claim that at present, each vertex in $U$ is adjacent to at least
$(\gamma_1-2\beta)t$ clusters.  If this is not true for some $u \in U$, then we obtain a contradiction
$$(\gamma_1-\beta)n\leq \deg_G(u)\leq (\gamma_1-2\beta)t\ell+d\ell t +2\ep n<(\gamma_1-3\beta/2)n.$$
Likewise, each vertex in $V$ is adjacent to at least $(\gamma_2-2\beta)t$ clusters.  Assign an arbitrary order to the vertices in $U_0$. For each $u \in U_0$, we pick some $V_i$ adjacent
to $u$. The selection of $V_i$ is arbitrary, but no $V_i$ is selected more than $\frac{d\ell}{6s}$ times. Such $V_i$
exists even for the last vertex of $U_0$ because $|U_0|\leq 2\ep n<(\gamma_1-2\beta)t\frac{d\ell}{6s}$. For each $u\in U_0$
and its corresponding $V_i$, we remove a copy of $K_{s,s}$ containing $u$, $s$ vertices in $V_i$, and $s-1$ vertices
in $U_i$. Such a copy of $K_{s,s}$ can always be found even if $u$ is the last vertex in $U_0$
because $(U_i, V_i)$ is $\ep$-regular and $\deg_G(u, V_i)\geq d\ell>\ep\ell+\frac{d\ell}{6s}s$ thus Fact \ref{interprop} (with $k=s-1$) allows us to choose $s-1$ vertices from $U_i$ and $s$ vertices from $N(u)\cap V_i$ to complete the copy of $K_{s,s}$. As a result, $U_i$ now has
one more vertex than $V_i$, so one may view this process as moving $u$ to $U_i$. We repeat this
process for all $v \in V_0$ as well. By the end of this step, we have $U_0=V_0=\emptyset$, and each
$U_i$, $V_i$, $i \geq 1$ contains at least $\ell -\ep\ell-d\ell/3$ vertices (for example, $U_i$ may have lost $\frac{d\ell(s-1)}{6s}$
vertices because of $U_0$ and $d\ell/6$ vertices because of $V_0$).  As a result, we have $\delta(G[U_i, V_i])\geq (\frac{2d}{3}-2\ep)\ell$ for all $i\geq 1$. Note that the sizes of $U_i$ and $V_i$ may currently be different.

\emph{Step 3}. We want to show that for any $i \neq j$, there is a path $U_iV_{i_1}U_{i_1}\dots V_{i_a}U_{i_a}V_jU_j$ (resp. $V_iU_{i_1}V_{i_1}\dots U_{i_a}V_{i_a}U_jV_j$) for some $0\leq a\leq 2$. If such a path exists, then for each $i_b$, $1\leq b\leq a+1$ (assume that $i = i_0$ and $j = i_{a+1}$), we may remove a copy of $K_{s,s}$ containing one vertex from $U_{i_{b-1}}$, $s$ vertices
from $V_{i_b}$, and $s-1$ vertices from $U_{i_b}$. This removal reduces the size of $U_i$ by one, increases
the size of $U_j$ by one but does not change the sizes of other clusters (all modulo $s$). We may
therefore adjust the sizes of $U_i$ and $V_i$ (for $i \geq 1$) such that $|U_i|=|V_i|$ and $|U_i|$ is divisible
by $s$.  To do this we will need at most $2t$ paths: (i) Let $r:=\floor{\frac{n}{t}}\hspace{-.1in}\mod s$. (ii) Pair up the current biggest set $U_i$ and current smallest set $U_j$ and move vertices from $U_i$ to $U_j$ until one of the sets has exactly $\floor{\frac{n}{t}}-r$ elements. (iii) Repeat this process until all but one set in $\mathcal{U}$ has exactly $\floor{\frac{n}{t}}-r$ elements (there will be one set, say $U_t$, with as many as $(t-1)^2$ extra vertices) (iv) Do the same for the clusters in $\mathcal{V}$.  

Now we show how to find this path from $U_1$ to $U_2$.  First, if $U_1\sim V_2$, then $U_1V_2U_2$
is a path. Let $I = \{i : U_1 \sim V_i\}$ and $J = \{i : U_i\sim V_2\}$. If there exists $i \in I \cap J$, then
we find a path $U_1V_iU_iV_2U_2$. Otherwise $I \cap J = \emptyset$. Since both $|I|\geq (\gamma_1-2\beta)t$ and $|J|\geq (\gamma_2-2\beta)t$, Claim \ref{split} guarantees that there exists $i\in I$ and $j\in J$ such that $U_i\sim V_j$. We thus have a path $U_1V_iU_iV_jU_jV_2U_2$. Note that in this step we require that a cluster is contained in at most $\frac{d\ell}{3s}$ paths. This restriction has little impact
on the arguments above: we have $|I|>(\gamma_1-3\beta)t$ and $|J| > (\gamma_2-3\beta)t$ instead, still satisfying the conditions
of Claim \ref{split}.  

Now $U_0=V_0=\emptyset$, and for all $i \geq 1$, $|U_i|=|V_i|$ is divisible by $s$.  Let $\mathcal{K}$ be the union of all vertices in existing copies of $K_{s,s}$ and note that, 
\begin{equation*}
|U_i\setminus \mathcal{K}|=|V_i\setminus \mathcal{K}|\geq \ell-\ep \ell-2d\ell/3,
\end{equation*} 
which implies $\delta(G[U_i, V_i])\geq (\frac{d}{3}-2\ep)\ell\geq \frac{d}{4}\ell$ for $i\geq 1$.  Thus Fact \ref{slicing} implies that each pair $(U_i, V_i)$ is $(2\ep, \frac{d}{4})$-super-regular.  Applying the Blow-up Lemma to each $(U_i, V_i)$, we find the desired $K_{s,s}$-tiling.

\end{proof}

\section{Extremal Case}\label{extremesection}

In this section we prove Theorems \ref{main 1} and \ref{main 2} in the case when $G$ satisfies the extremal condition.  

Given $s\geq 2$ and $\lambda\in (0,\frac{1}{2})$, let $\alpha>0$ be sufficiently small.  Let $G[U,V]$ be a balanced bipartite graph on $2n=2ms$ vertices for sufficiently large $n$.  Without loss of generality suppose $\delta_V\geq \delta_U$ and note that $\delta_U\geq \lambda n$.  Suppose $G$ is edge minimal with respect to the condition $\delta_U+\delta_V\geq n+c$, and that $G$ satisfies the extremal condition with parameter $\alpha$.  Let $k_1$ be defined by $\delta_U=k_1s+s+r$, where $0\leq r\leq s-1$ and let $k_2s=n-k_1s$.

The proof will split into cases depending on whether $k_1\leq (1-\frac{1}{2s})k_2$ (we say $k_2\gg k_1$) or $k_1> (1-\frac{1}{2s})k_2$ (we say $k_1\approx k_2$).  When $k_1> (1-\frac{1}{2s})k_2$, we are only dealing with Theorem \ref{main 1} in which case we have $\delta_U+\delta_V\geq n+3s-5$.  Since $\delta_U=k_1s+s+r$, we have $\delta_V\geq k_2s+2s-5-r$. Since $G$ is edge minimal we have $\delta_V=k_2s+2s-5-r$, and since $\delta_V\geq \delta_U$, we have $k_2\geq k_1$.  If $\delta_V=\delta_U$, then we have $$\delta(G)\geq \frac{n+3s-5}{2}> \left\lbrace \begin{array}{ll} \frac{n}{2}+s-2   & \text{ if } m \text{ is even } \\ \frac {n+3s}{2}-3 & \text{ if } m \text{ is odd, } \end{array} \right. $$ which is solved in \cite{Z}.  So we may suppose that $\delta_V>\delta_U$.  

\begin{claim}\label{k2approxk1}
If $k_2=k_1$, then $r\leq \frac{s-6}{2}$ and consequently $\delta_V=k_2s+2s-5-r\geq k_2s+s$.  If $k_2=k_1+1$, then $r\leq s-3$ and consequently $\delta_V=k_2s+2s-5-r\geq k_2s+s-2$.
\end{claim}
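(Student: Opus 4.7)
The claim is a direct calculation that should fall out of three facts already established just before it: (i) $\delta_U = k_1 s + s + r$ with $0 \leq r \leq s-1$, (ii) edge-minimality forces $\delta_V = k_2 s + 2s - 5 - r$ (so that $\delta_U + \delta_V$ hits the threshold $n + 3s - 5$ exactly), and (iii) the strict inequality $\delta_V > \delta_U$. So my plan is to simply substitute and exploit (iii) in each of the two sub-cases.

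For the case $k_2 = k_1$, I would write $\delta_V - \delta_U = (k_2 - k_1)s + (2s - 5 - r) - (s + r) = s - 5 - 2r$, and invoke $\delta_V > \delta_U$ to conclude $s - 5 - 2r > 0$, i.e.\ $r < (s-5)/2$. Since $r$ is an integer this tightens to $r \leq (s-6)/2$ (and to $(s-7)/2$ when $s$ is odd, but the weaker bound suffices). Plugging back in,
\[
\delta_V \;=\; k_2 s + 2s - 5 - r \;\geq\; k_2 s + 2s - 5 - \tfrac{s-6}{2} \;=\; k_2 s + \tfrac{3s-4}{2} \;\geq\; k_2 s + s,
\]
where the last step uses $s \geq 4$. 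The cases $s \leq 5$ are vacuous since $r \geq 0$ is then incompatible with $r \leq (s-6)/2$, so in that range the hypothesis $k_2 = k_1$ cannot occur under $\delta_V > \delta_U$.

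For the case $k_2 = k_1 + 1$, I substitute $k_1 s = k_2 s - s$ into $\delta_U$ to get $\delta_U = k_2 s + r$, whence $\delta_V - \delta_U = (2s - 5 - r) - r = 2s - 5 - 2r$. Then $\delta_V > \delta_U$ gives $r < s - 5/2$, hence $r \leq s - 3$ because $r$ is an integer. Therefore
\[
\delta_V \;=\; k_2 s + 2s - 5 - r \;\geq\; k_2 s + 2s - 5 - (s - 3) \;=\; k_2 s + s - 2,
\]
as claimed.

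There is essentially no obstacle here; the only thing to be careful about is the rounding when converting the strict real inequality $r < (s-5)/2$ (or $r < s - 5/2$) to an integer bound, and noting that the stated conclusion $r \leq (s-6)/2$ is simply this integer bound (vacuously true for small $s$). Everything else is a one-line substitution.
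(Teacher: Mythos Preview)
Your proposal is correct and follows exactly the same approach as the paper's proof, which simply states that both conclusions follow from the inequality $k_2s+2s-5-r=\delta_V>\delta_U=k_1s+s+r$. You have merely unpacked the arithmetic that the paper leaves implicit.
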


\begin{proof}
Both statements are implied the following inequality: $k_2s+2s-5-r=\delta_V>\delta_U=k_1s+s+r$.
\end{proof}

When $k_1\leq (1-\frac{1}{2s})k_2$, we either have $k_2<(s-d)k_1$, in which case we are still only dealing with Theorem \ref{main 1} and we will assume $\delta_U+\delta_V\geq n+3s-5$, or we have $k_2\geq (s-d)k_1$, in which case we are dealing with Theorem \ref{main 2} and we will assume $\delta_U+\delta_V\geq \delta_U+\delta_V\geq n+2s-2\croot{s}+d+c(s)$.  

\subsection{Pre-processing}\label{preprocesssection}

Let $U_2'=U\setminus U_1'$ and $V_1'=V\setminus V_2'$.  Let
\begin{align*}
U_1&=\{x\in U: 
\deg(x,V_2')<\alpha^{1/3}k_1s\},~~~
V_2=\{x\in V: 
\deg(x,U_1')<\alpha^{1/3}k_2s\},\\
U_2&=\{x\in U: \deg(x, V_1')<\alpha^{1/3}k_1s \vee \deg(x,V_2')>(1-\alpha^{1/3})k_2s\},\\
V_1&=\{x\in V: \deg(x,U_2')<\alpha^{1/3}k_2s \vee \deg(x, U_1')>(1-\alpha^{1/3})k_1s\},\\
U_0&=U\setminus (U_1\cup U_2), \text{ and } V_0=V\setminus (V_1\cup V_2).\\
\end{align*}

\begin{claim}\label{preprocess}
\begin{enumerate}
\item $k_1s-\alpha^{2/3}k_2s\leq|U_1|,|V_1|\leq k_1s+\alpha^{2/3}k_1s$

\item $k_2s-\alpha^{2/3}k_1s\leq|U_2|,|V_2|\leq k_2s+\alpha^{2/3}k_2s$

\item $|U_0|,|V_0|\leq \alpha^{2/3}n$

\item $\delta(U_0,V_1)\geq \alpha^{1/3}k_1s-\alpha^{2/3}k_2s$, $\delta(U_0,V_2)\geq \alpha^{1/3}k_1s-\alpha^{2/3}k_1s$

\item $\delta(V_0,U_1)\geq \alpha^{1/3}k_2s-\alpha^{2/3}k_2s$, $\delta(V_0,U_2)\geq \alpha^{1/3}k_2s-\alpha^{2/3}k_1s$

\item $\delta(G[U_i,V_i])\geq k_is-\alpha^{1/3}k_is-\alpha^{2/3}k_{3-i}s\geq (1-2\alpha^{1/3})k_is$

\item $\Delta(U_1,V_2)\leq 2\alpha^{1/3}k_1s$, $\Delta(V_2,U_1)\leq 2\alpha^{1/3}k_2s$
\end{enumerate}

\end{claim}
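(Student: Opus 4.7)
The plan is to verify each of the seven properties by careful double-counting, using the fact that the extremal condition $d(U_1', V_2') \leq \alpha$ translates into $e(U_1', V_2') \leq \alpha k_1 k_2 s^2$, combined with the degree conditions $\delta_U \geq k_1 s + s$ and $\delta_U + \delta_V \geq n + c$ (for the appropriate constant $c$ in each of Theorem \ref{main 1} and Theorem \ref{main 2}).

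For the size bounds (i) and (ii), I first show that almost all of $U_1'$ lies in $U_1$ and almost all of $V_2'$ lies in $V_2$ via a Markov-type count: the number of $x \in U_1'$ with $\deg(x, V_2') \geq \alpha^{1/3} k_1 s$ is at most $\alpha k_1 k_2 s^2 / (\alpha^{1/3} k_1 s) = \alpha^{2/3} k_2 s$, giving $|U_1 \cap U_1'| \geq k_1 s - \alpha^{2/3} k_2 s$ (and analogously $|V_2 \cap V_2'| \geq k_2 s - \alpha^{2/3} k_1 s$). For the matching upper bounds on $|U_1|$ and $|V_2|$, I need to control the ``crossing'' pieces $|U_1 \cap U_2'|$ and $|V_2 \cap V_1'|$: since $\sum_{v \in V_2'}\deg(v) \geq k_2 s \cdot \delta_V$ and $e(U_1', V_2') \leq \alpha k_1 k_2 s^2$, the edge deficit $k_2^2 s^2 - e(U_2', V_2')$ is small, and a second Markov-type bound on that deficit controls $|U_1 \cap U_2'|$. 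A symmetric argument using $e(U_1', V_1')$ handles $|V_1|$ and $|U_2|$.

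Property (iii) follows after observing $U_1 \cap U_2 = \emptyset$: a vertex in the intersection would either have total degree below $2\alpha^{1/3} k_1 s < \delta_U$, or would force $\alpha^{1/3} k_1 s > (1 - \alpha^{1/3}) k_2 s$, both impossible for $\alpha$ small and $k_1 \leq k_2$. Then $|U_0| = n - |U_1| - |U_2| \leq \alpha^{2/3}(k_1 + k_2) s = \alpha^{2/3} n$, using the lower bounds from (i) and (ii). Statements (iv)--(vii) are then straightforward applications of the defining thresholds together with the size bounds. For instance, $u \in U_0$ forces $\deg(u, V_1'), \deg(u, V_2') \geq \alpha^{1/3} k_1 s$, and subtracting $|V_i' \setminus V_i|$ (bounded by (i)--(ii)) gives (iv); (v) is symmetric. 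For (vi), any $u \in U_1$ has $\deg(u, V_1') \geq \delta_U - \alpha^{1/3} k_1 s \geq (1 - \alpha^{1/3}) k_1 s$ by definition, and subtracting $|V_1' \setminus V_1|$ gives the bound; for $v \in V_1$ a small case split on the two clauses defining $V_1$ leads to the same conclusion. Property (vii) follows from $\deg(u, V_2) \leq \deg(u, V_2') + |V_2 \setminus V_2'|$ together with the definition of $U_1$.

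The main obstacle I expect is verifying the upper bounds in (i)--(ii) when $k_2 \gg k_1$ (the regime of Theorem \ref{main 2}): bounding $|U_1 \cap U_2'|$ through the edge count $e(U_2', V_2')$ requires $\delta_V$ to be essentially at least $k_2 s$, which must be extracted from $\delta_U + \delta_V \geq n + c$ with care, since $\delta_U = k_1 s + s + r$ can be much smaller than $k_2 s$. Checking that the relevant constant $c$ in each theorem is large enough to close this gap, once $k_1$ is large (as guaranteed by $\delta_U \geq \lambda n$ together with $n$ large), is the most delicate piece of bookkeeping.
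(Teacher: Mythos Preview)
Your approach is essentially identical to the paper's: the same Markov-type counts on $e(U_1',V_2')$ and the complementary non-edge counts $\bar{e}(U_2',V_2')$, $\bar{e}(U_1',V_1')$ give the lower bounds on $|U_1|,|V_2|,|U_2|,|V_1|$, the upper bounds and (iii) follow from disjointness of $U_1,U_2$ (which you make explicit while the paper leaves it implicit), and (iv)--(vii) are read off from the defining thresholds plus the size estimates. You have also correctly isolated the only non-automatic step, namely that the deficit bound $\bar{e}(U_2',V_2')\le \alpha k_1k_2s^2$ needs $\delta_V\ge k_2s-O_s(1)$, which is exactly what $\delta_U+\delta_V\ge n+c$ together with $\delta_U=k_1s+s+r$ provides in both theorems.
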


\begin{proof}
We have 
\begin{align*}
\alpha^{1/3}k_1s|U_1'\setminus U_1|\leq e(U_1'\setminus U_1,V_2')\leq e(U_1',V_2')\leq \alpha k_1sk_2s 
\end{align*}
which gives $|U_1'\setminus U_1|\leq \alpha^{2/3}k_2s$ and thus $|U_1|\geq k_1s-\alpha^{2/3}k_2s$.

Also 
\begin{align*}
\alpha^{1/3}k_2s|V_2'\setminus V_2|\leq e(V_2'\setminus V_2,U_1')\leq e(V_2',U_1')\leq \alpha k_1sk_2s
\end{align*}
which gives $|V_2'\setminus V_2|\leq \alpha^{2/3}k_1s$ and thus $|V_2|\geq k_2s-\alpha^{2/3}k_1s$.

Since $e(U_1',V_2')\leq \alpha k_1sk_2s$, we have $e(U_2',V_2')\geq k_2sk_2s-\alpha k_1sk_2s$ and $e(U_1',V_1')\geq k_1sk_1s-\alpha k_1sk_2s$.  Thus 
\begin{align*}
\alpha^{1/3}k_2s|U_2'\setminus U_2|\leq \bar{e}(U_2',V_2')\leq \alpha k_1sk_2s
\end{align*}
which gives $|U_2'\setminus U_2|\leq \alpha^{2/3}k_1s$ and thus $|U_2|\geq k_2s-\alpha^{2/3}k_1s$.

Also
\begin{align*}
\alpha^{1/3}k_1s|V_1'\setminus V_1|\leq \bar{e}(U_1',V_1')\leq \alpha k_1sk_2s
\end{align*}
which gives $|V_1'\setminus V_1|\leq \alpha^{2/3}k_2s$ and thus $|V_1|\geq k_1s-\alpha^{2/3}k_2s$.

Putting these results together we have $|U_0|,|V_0|\leq \alpha^{2/3} n$, $|U_1|,|V_1|\leq k_1s+\alpha^{2/3}k_1s$, and $|U_2|,|V_2|\leq k_2s+\alpha^{2/3}k_2s$.

By the definition of $U_1,U_2,V_1,V_2$ and the lower bounds on their sizes, we have $\delta(U_0,V_1)\geq \alpha^{1/3}k_1s-\alpha^{2/3}k_2s$, $\delta(U_0,V_2)\geq \alpha^{1/3}k_1s-\alpha^{2/3}k_1s$, $\delta(V_0,U_1)\geq \alpha^{1/3}k_2s-\alpha^{2/3}k_2s$, and $\delta(V_0,U_2)\geq \alpha^{1/3}k_2s-\alpha^{2/3}k_1s$.  By the definition of $U_1,V_2$ and the upper bounds on their sizes we have $\Delta(U_1,V_2)\leq 2\alpha^{1/3}k_1s$ and $\Delta(V_2,U_1)\leq 2\alpha^{1/3}k_2s$.

\end{proof}

\subsection{Idea of the Proof}

We start with the partition given in Section \ref{preprocesssection} and we call $U_0$ and $V_0$ the \emph{exceptional} sets. Let $i\in \{1,2\}$. We will attempt to update the partition by moving a constant number (depending only on $s$) of \emph{special} vertices between $U_1$ and $U_2$, denote them by $X$, and \emph{special} vertices between $V_1$ and $V_2$, denote them by $Y$, as well as partitioning the exceptional sets as $U_0=U_0^1\cup U_0^2$ and $V_0=V_0^1\cup V_0^2$. Let $U_1^*$, $U_2^*$, $V_1^*$ and $V_2^*$ be the resulting sets after moving the special vertices. 
Suppose $u$ is a special vertex in the set $U_1^*$.  The degree of $u$ in $V_1^*$ may be small, but $u$ will have a set of at least $s$ neighbors in $V_1^*$ which are disjoint from the neighbors of any other special vertex in $U_1^*$.  Furthermore, these neighbors of $u$ in $V_1^*$ will have huge degree in $U_1^*$, so it will be easy to incorporate each special vertex into a unique copy of $K_{s,s}$.

Our goal is to obtain two graphs, $G_1:=G[U_1^*\cup U_0^1, V_1^*\cup V_0^1]$ and $G_2:=[U_2^*\cup U_0^2, V_2^*\cup V_0^2]$ so that $G_1$ satisfies $$|U_1^*\cup U_0^1|=\ell_1s,~ |V_1^*\cup V_0^1|=\ell_1s$$ and $G_2$ satisfies $$|U_2^*\cup U_0^2|=\ell_2s,~ |V_2^*\cup V_0^2|=\ell_2s,$$ for some positive integers $\ell_1,\ell_2$.  We tile $G_1$ as follows.  We incorporate all of the special vertices into copies of $K_{s,s}$.  We now deal with the exceptional vertices: Claim \ref{preprocess} gives $|U_0|, |V_0|\leq \alpha^{2/3}n$ and $\delta(U_0, V_i), \delta(V_0, U_i)\gg s\alpha^{2/3}n$, so they may greedily be incorporated into unique copies of $K_{s,s}$. Then we are left with two balanced ``almost complete'' graphs, which can be easily tiled.

So throughout the proof, if we can make, say $|U_1^*\cup U_0^1|$ and $|V_1^*\cup V_0^1|$ equal and divisible by $s$, we simply state that ``we are done.''

\subsection{Preliminary Lemmas}

In this section we give some lemmas which will be used in the proof of Theorems \ref{main 1} and \ref{main 2}.  Recall that in each of those theorems we suppose $k_2s\geq k_1s\geq \lambda n$.  

\begin{lemma}[Zhao \cite{Z}, Fact 5.3]\label{Zhao lemma}
Let $F$ be an $A,B$-bigraph with $\delta:=\delta(A,B)$ and $\Delta:=\Delta(B,A)$ 
Then $F$ contains $f_h$ vertex disjoint $h$-stars from $A$ to $B$, and $g_h$ vertex disjoint $h$-stars from $B$ to $A$ (the stars from $A$ to $B$ and those from $B$ to $A$ need not be disjoint), where
\begin{align*}
f_h\geq\frac{(\delta-h+1)|A|}{h\Delta+\delta-h+1}, ~~~ g_h\geq\frac{\delta|A|-(h-1)|B|}{\Delta+h\delta-h+1}.
\end{align*}

\end{lemma}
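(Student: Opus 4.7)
The plan is to run a greedy/extremal argument for each bound. In each case, take a maximum vertex-disjoint family $\mathcal{C}$ of $h$-stars (with the appropriate orientation of center and leaves) and let $A_u, A_{\bar u}$ (resp.\ $B_u, B_{\bar u}$) denote the used and unused portions of $A$ (resp.\ $B$). The key observation in both cases is that maximality of $\mathcal{C}$ forces a local degree constraint on the unused vertices, and then the claim follows by double-counting edges between $A_{\bar u}$ and $B$, once using this local constraint and once using the global min/max-degree hypotheses.

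For the first inequality, let $\mathcal{C}$ be a maximum family of vertex-disjoint $h$-stars with centers in $A$ and leaves in $B$, and set $f := |\mathcal{C}|$, so $|A_u| = f$ and $|B_u| = fh$. By maximality, every $a \in A_{\bar u}$ satisfies $|N(a) \cap B_{\bar u}| \leq h - 1$, hence has at least $\delta - h + 1$ neighbors inside $B_u$. Summing over $A_{\bar u}$ gives $e(A_{\bar u}, B_u) \geq (|A| - f)(\delta - h + 1)$. On the other hand, bounding edges from the $B_u$-side via the max-degree hypothesis gives $e(A_{\bar u}, B_u) \leq e(A, B_u) \leq \Delta |B_u| = fh\Delta$. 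Combining the two and solving for $f$ yields exactly $f \geq \frac{(\delta - h + 1)|A|}{h\Delta + \delta - h + 1}$.

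For the second inequality, let $\mathcal{C}$ be a maximum family of vertex-disjoint $h$-stars with centers in $B$ and leaves in $A$, and set $g := |\mathcal{C}|$, so $|B_u| = g$ and $|A_u| = gh$. Maximality now gives $|N(b) \cap A_{\bar u}| \leq h - 1$ for every $b \in B_{\bar u}$, hence $e(A_{\bar u}, B_{\bar u}) \leq (h-1)(|B| - g)$. Also, the max-degree bound on $B$ gives $e(A_{\bar u}, B_u) \leq \Delta g$, so adding these produces an upper bound on $e(A_{\bar u}, B)$. The min-degree hypothesis on $A$ supplies the complementary lower bound $e(A_{\bar u}, B) \geq \delta |A_{\bar u}| = \delta(|A| - gh)$. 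Chaining the two gives $\delta(|A| - gh) \leq g\Delta + (h-1)(|B| - g)$, which rearranges to the stated $g \geq \frac{\delta|A| - (h-1)|B|}{\Delta + h\delta - h + 1}$.

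Both arguments are short and essentially mechanical, so there is no genuine obstacle. The one point requiring care is that the minimum-degree hypothesis applies only to $A$ and the maximum-degree hypothesis only to $B$; in particular, in the second argument it is essential to double-count from the $A$-side (summing $\deg(a)$ over $A_{\bar u}$) rather than from the $B$-side, since we have no useful lower bound on $\deg(b)$ for $b \in B_{\bar u}$.
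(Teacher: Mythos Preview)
Your proof is correct. Note that the paper does not actually supply a proof of this lemma: it is quoted verbatim as Fact~5.3 from Zhao~\cite{Z} and used as a black box. Your argument is the standard greedy/double-counting proof (and is essentially the one given in~\cite{Z}), so there is nothing to compare.
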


\begin{lemma}\label{STARSlemma}
Let $G[A,B]$ be a bipartite graph with 
$|B|=\ell s+b$ for some positive integers $\ell$ and $b$.  Let $0\leq x\leq s-1$ and let $\gamma$ be a small constant such that $\alpha^{1/3}\ll \gamma\ll \frac{1}{2s}$.  If $b<\frac{1}{\gamma}$ and
\begin{enumerate}
\item $\delta(B,A)\geq s-x$, $\Delta(A,B)\leq 2\alpha^{1/3}k_2s$, and $|B|\geq \alpha^{1/6}|A|$
\end{enumerate}
then there are at least $b$ vertex disjoint $(s-x)$-stars from $B$ to $A$.

Suppose $k_2s+\alpha^{2/3}k_2s\geq |A|, |B|\geq k_1s-\alpha^{2/3}k_2s$.  If
\begin{enumerate}[resume]
\item $\delta(A,B)\geq s-1+b$ and $k_1>(1-\frac{1}{2s})k_2$,
\end{enumerate}
then there are at least $b$ vertex disjoint $s$-stars from $B$ to $A$.  If $b<\frac{1}{\gamma}$ and
\begin{enumerate}[resume]

\item $\delta(A,B)\geq s$, $k_1>(1-\frac{1}{2s})k_2$, and $\Delta(B,A)\leq 2\alpha^{1/3}k_2s$ or

\item $\delta(A,B)\geq d$, $|A|\geq \frac{s-1/2}{d}|B|$, and $\Delta(B,A)\leq 2\alpha^{1/3}k_2s$,
\end{enumerate}
then there are at least $b$ vertex disjoint $s$-stars from $B$ to $A$.  Furthermore, if $b\geq \frac{1}{\gamma}$ and 
\begin{enumerate}[resume]
\item $\delta(A,B)\geq b/4$ and $\Delta(B,A)<2\alpha^{1/3}k_2s$ or

\item $\delta(B,A)\geq b/4$ and $\Delta(A,B)<2\alpha^{1/3}k_2s$,

\end{enumerate}
then there are at least $b$ vertex disjoint $s$-stars from $B$ to $A$.

\end{lemma}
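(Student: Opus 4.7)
The plan is to deduce each of the six parts as a direct application of Zhao's star counting inequality (Lemma \ref{Zhao lemma}), choosing the parameters $h$, $\delta$, $\Delta$ to match the hypothesis and, whenever the given degree bound is $\delta(B,A)$ instead of $\delta(A,B)$, swapping the roles of the two sides and using the $f_h$-bound in place of $g_h$. Since the number of vertex-disjoint stars is an integer, it is enough to verify in each case that the Zhao lower bound is strictly greater than $b-1$.

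For part (i) I would apply Lemma \ref{Zhao lemma} with $A$ and $B$ interchanged and $h=s-x$; the numerator $(\delta(B,A)-h+1)|B|$ is at least $|B|$, the denominator is at most $2(s-x)\alpha^{1/3}k_2 s+1$, and the assumption $|B|\ge \alpha^{1/6}|A|$ combined with $(s-x)|B|\le \Delta(A,B)\cdot |A|\le 2\alpha^{1/3}k_2 s\cdot |A|$ forces $|B|$ to be linear in $n$, so $f_{s-x}\gg 1/\gamma>b$. Part (vi) is handled the same way with $h=s$ and $\delta=b/4$, using $b\ge 1/\gamma\gg s$ to make the numerator at least $(b/8)|B|$ while the denominator stays $o(b)$. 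For parts (ii)-(v) I would apply Lemma \ref{Zhao lemma} directly with $h=s$, yielding
$$g_s\ge \frac{\delta|A|-(s-1)|B|}{\Delta(B,A)+s\delta-s+1}.$$
In (iii), $\delta=s$, and the size bounds $k_2s+\alpha^{2/3}k_2s\ge|A|,|B|\ge k_1s-\alpha^{2/3}k_2s$ together with $k_1>(1-\tfrac{1}{2s})k_2$ force $s|A|-(s-1)|B|\ge (s-1)k_2/2-O(\alpha^{2/3}k_2s)$, while the bound $\Delta(B,A)\le 2\alpha^{1/3}k_2s$ keeps the denominator $o(k_2)$, giving $g_s\gg 1/\gamma>b$. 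Part (iv) is almost an identity: the ratio $|A|\ge\frac{s-1/2}{d}|B|$ is set up precisely so that $d|A|-(s-1)|B|\ge \tfrac{1}{2}|B|$, and the small $\Delta(B,A)$ again produces $g_s\gg b$. In (v), $\delta=b/4$ with $b\ge 1/\gamma\gg s$ yields a numerator at least $(b/8)|A|$ (the size bounds give $|B|\le 2|A|$ after absorbing the $\alpha^{2/3}$ terms) against a denominator at most $2\alpha^{1/3}k_2s+sb/4$, comfortably producing $g_s\ge b$.

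The genuinely delicate case is (ii). No upper bound on $b$ is assumed, and the numerator $(s-1+b)|A|-(s-1)|B|=b|A|+(s-1)(|A|-|B|)$ can be pushed toward zero when $|B|>|A|$. The crux is the hypothesis $k_1>(1-\tfrac{1}{2s})k_2$, which together with the size constraints caps $|B|-|A|$ at $k_2/2+O(\alpha^{2/3}k_2s)$; since $|A|\ge k_1s-\alpha^{2/3}k_2s=\Omega(n)$, the $b|A|$ term dominates the deficit $(s-1)(|B|-|A|)$ for every $b\ge 1$, and one verifies $g_s>b-1$. The main obstacle throughout is exactly this kind of bookkeeping: weighing the $\alpha^{2/3}$-order slack in the vertex-set sizes against the small denominators afforded by $\Delta\le 2\alpha^{1/3}k_2s$ so that Zhao's estimate lands strictly above $b-1$ on the nose.
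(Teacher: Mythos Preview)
Your approach for parts (i), (iii), (iv), (v), (vi) is essentially the same as the paper's: each is a direct plug into Lemma~\ref{Zhao lemma}, choosing $f_h$ when the hypothesis bounds $\delta(B,A)$ and $g_h$ when it bounds $\delta(A,B)$.

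The gap is in part (ii). You propose to use the $g_s$ bound
\[
g_s\ \ge\ \frac{\delta(A,B)\,|A|-(s-1)|B|}{\Delta(B,A)+s\,\delta(A,B)-s+1},
\]
but part (ii) carries \emph{no} hypothesis on $\Delta(B,A)$; the only assumptions are $\delta(A,B)\ge s-1+b$, the size window $k_1s-\alpha^{2/3}k_2s\le |A|,|B|\le k_2s+\alpha^{2/3}k_2s$, and $k_1>(1-\tfrac{1}{2s})k_2$. So $\Delta(B,A)$ may be as large as $|A|$, and then the denominator is of order $|A|+sb$. With $|A|\approx|B|\approx N$ and numerator $\approx bN$, Zhao's inequality gives only $g_s\gtrsim bN/(N+sb)$, which falls short of $b$ once $b$ is not $o(\sqrt{N})$; in the applications (e.g.\ $b=a_2=|U_2|-k_2s$, which can be as large as $\alpha^{2/3}k_2s$) this regime does occur. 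Your paragraph on (ii) addresses the numerator (that $b|A|$ dominates $(s-1)(|B|-|A|)$), but never controls the denominator, and that is where the argument breaks.

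The paper therefore does \emph{not} invoke Lemma~\ref{Zhao lemma} for (ii). It argues directly: take a maximum family of vertex-disjoint $s$-stars with centers $C\subseteq B$ and leaves $L\subseteq A$, and suppose $|C|\le b-1$. Every vertex of $B\setminus C$ then has at most $s-1$ neighbours in $A\setminus L$, while every vertex of $A\setminus L$ still has at least $\delta(A,B)-|C|\ge s$ neighbours in $B\setminus C$. Double counting the edges between $A\setminus L$ and $B\setminus C$ gives
\[
s\bigl(|A|-|L|\bigr)\ \le\ (s-1)\bigl(|B|-|C|\bigr),
\]
and plugging in the size window yields $sk_1\le (s-\tfrac12)k_2$, contradicting $k_1>(1-\tfrac{1}{2s})k_2$. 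This extremal argument sidesteps $\Delta(B,A)$ entirely, which is exactly what your Zhao-based approach cannot do here.
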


\begin{proof}

\begin{enumerate}

\item Suppose $b<\frac{1}{\gamma}$, $\delta(B,A)\geq s-x$, $\Delta(A,B)\leq 2\alpha^{1/3}k_2s$, and $|B|\geq \alpha^{1/6}|A|$.  Let $\mathcal{S}_B$ be the maximum set of vertex disjoint $(s-x)$-stars from $B$ to $A$ and let $f_{s-x}=|\mathcal{S}_B|$.  By Lemma \ref{Zhao lemma}, we have
\begin{align*}
f_{s-x}\geq \frac{|B|}{2(s-x)\alpha^{1/3}k_2s+1}\geq \frac{\alpha^{1/6}}{3s\alpha^{1/3}}
\geq \frac{1}{\gamma}
\geq b
\end{align*}

\item Suppose $\delta(A,B)\geq s-1+b$ and $k_1>(1-\frac{1}{2s})k_2$.  Let $\mathcal{S}_A$ be a maximum set of vertex disjoint $s$-stars with centers $C\subseteq B$ and leaves $L\subseteq A$.  Suppose $|C|\leq b-1$.  Then 
\begin{align*}
s(|A|-|L|) \leq(s-1+b-|C|)(|A|-|L|) \leq e(B\setminus C, A\setminus L)\leq (s-1)(|B|-|C|),
\end{align*}
which implies
\begin{align*}
s(k_1s-\alpha^{2/3}k_2s)\leq (s-1)(k_2s+\alpha^{2/3}k_2s)+s|L|-(s-1)|C|.
\end{align*}
Thus $sk_1\leq (s-\frac{1}{2})k_2$, contradicting the fact that $k_1>(1-\frac{1}{2s})k_2$.

\item Suppose $b<\frac{1}{\gamma}$, $\delta(A,B)\geq s$, $k_1>(1-\frac{1}{2s})k_2$, and $\Delta(B,A)\leq 2\alpha^{1/3}k_2s$.  Let $\mathcal{S}_A$ be the maximum set of vertex disjoint $s$-stars from $A$ to $B$ and let $g_s=|\mathcal{S}_A|$.  By Lemma \ref{Zhao lemma}, we have
\begin{align*}
g_s\geq \frac{s|A|-(s-1)|B|}{2\alpha^{1/3}k_2s+s^2-s+1}\geq \frac{s(k_1s-\alpha^{2/3}k_2s)-(s-1)(k_2s+\alpha^{2/3}k_2s)}{3\alpha^{1/3}k_2s}
\geq \frac{1}{12\alpha^{1/3}}\geq \frac{1}{\gamma}\geq b
\end{align*}
Where the third inequality holds since $sk_1s> (s-\frac{1}{2})k_2s$.

\item Suppose $b<\frac{1}{\gamma}$, $\delta(A,B)\geq d$, $|A|\geq \frac{s-1/2}{d}|B|$, and $\Delta(B,A)\leq 2\alpha^{1/3}k_2s$.  Let $\mathcal{S}_B$ be the maximum set of vertex disjoint $s$-stars from $B$ to $A$ and let $g_s=|\mathcal{S}_B|$.  By Lemma \ref{Zhao lemma}, we have
\begin{align*}
g_s\geq \frac{d|A|-(s-1)|B|}{2\alpha^{1/3}k_2s+sd-s+1}\geq \frac{|B|/2}{3\alpha^{1/3}k_2s}
\geq \frac{\lambda}{6\alpha^{1/3}}
\geq \frac{1}{\gamma}
\geq b
\end{align*}

\item Suppose $b\geq \frac{1}{\gamma}$, $\delta(A,B)\geq b/4$ and $\Delta(B,A)<2\alpha^{1/3}k_2s$.  Let $\mathcal{S}_B$ be the maximum set of vertex disjoint $s$-stars from $B$ to $A$ and let $g_s=|\mathcal{S}_B|$.  By Lemma \ref{Zhao lemma}, we have
\begin{align*}
g_s\geq \frac{\frac{b}{4}|A|-(s-1)|B|}{2\alpha^{1/3}k_2s+s\frac{b}{4}-s+1}\geq \frac{b\lambda/4-(s-1)}{3\alpha^{1/3}}\geq b
\end{align*}

\item Suppose $b\geq \frac{1}{\gamma}$, $\delta(B,A)\geq b/4$ and $\Delta(A,B)<2\alpha^{1/3}k_2s$.  Let $\mathcal{S}_B$ be the maximum set of vertex disjoint $s$-stars from $B$ to $A$ and let $f_s=|\mathcal{S}_B|$.  By Lemma \ref{Zhao lemma}, we have
\begin{align*}
f_s\geq \frac{(\frac{b}{4}-s+1)|B|}{2s\alpha^{1/3}k_2s+\frac{b}{4}-s+1}\geq \frac{(\frac{b}{4}-s+1)\lambda}{3\alpha^{1/3}}\geq b
\end{align*}

\end{enumerate}

\end{proof}



\begin{lemma}\label{lemma:diagonalsum}
Let $G[A,B]$ be a bipartite graph with $|A|=\ell_1s+a$ and $|B|=\ell_2s+b$ such that 
$1\leq b\leq s-1$.  Suppose further that $k_2s+\alpha^{2/3}k_2s\geq |A|,|B|\geq k_1s-\alpha^{2/3}k_2s$ and $\Delta(A, B), \Delta(B, A)\leq 2\alpha^{1/3} k_2s$.  If 
\begin{enumerate}
\item $a\geq 1$ and $\delta(A, B)+\delta(B, A)\geq 2s-3+a+b$ or

\item $a=0$ and $\delta(A, B)+\delta(B, A)\geq 2s-2+b$,
\end{enumerate}
then there is a set $\mathcal{S}_A$ of $a$ vertex disjoint $s$-stars from $A$ to $B$ and a set $\mathcal{S}_B$ of $b$ vertex disjoint $s$-stars from $B$ to $A$ such that the stars in $\mathcal{S}_A$ are disjoint from the stars in $\mathcal{S}_B$.
\end{lemma}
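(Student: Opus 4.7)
The plan is to construct $\mathcal{S}_B$ and $\mathcal{S}_A$ sequentially, exploiting the degree-sum hypothesis together with the tightness $\Delta(A,B), \Delta(B,A) \leq 2\alpha^{1/3}k_2 s$. Since the canonical decomposition gives $a, b \leq s-1$, the entire target star forest occupies only $(a+b)(s+1) = O(s^2)$ vertices, negligible compared to $|A|, |B| = \Theta(n)$; so the challenge is to locate the stars rather than pack them densely.

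First I would dispatch case (ii), where $a = 0$ and only $\mathcal{S}_B$ is required. The argument splits on $\delta(B,A)$. If $\delta(B,A) \geq s$, I would apply Lemma \ref{Zhao lemma} with the roles of $A$ and $B$ swapped (so $\delta$ becomes $\delta(B,A)$ and $\Delta$ becomes $\Delta(A,B)$) to get at least $\frac{(\delta(B,A) - s + 1)|B|}{s\Delta(A,B) + \delta(B,A) - s + 1}$ disjoint $B$-centered $s$-stars; the given lower bound on $|B|$ and the smallness of $\Delta(A,B)$ make this quantity much larger than $b$, and I pick any $b$. If instead $\delta(B,A) \leq s-1$, the hypothesis forces $\delta(A,B) \geq s-1+b$, and the ``$g_s$'' form of Lemma \ref{Zhao lemma} applied directly (with $\delta = \delta(A,B)$, $\Delta = \Delta(B,A)$) still produces far more than $b$ $B$-centered stars, because $\delta(A,B)|A| - (s-1)|B|$ is of order $b|A|$ under the given size constraints while the denominator is $O(\alpha^{1/3} k_2 s^2)$.

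For case (i) with $a \geq 1$, I would first construct $\mathcal{S}_B$ exactly as above (the hypothesis in (i) is strictly stronger than what (ii) needs, since $a \geq 1$), and then locate $\mathcal{S}_A$ inside $G - V(\mathcal{S}_B)$. Removing the $b(s+1) \leq s^2$ vertices of $V(\mathcal{S}_B)$ lowers every minimum degree by at most $s^2$, preserving all hypotheses up to an additive constant. If $\delta(A,B) \geq s$ on the reduced graph, the ``$f_s$'' form of Lemma \ref{Zhao lemma} again yields many more than $a$ disjoint $A$-centered $s$-stars, and I select $a$. Otherwise $\delta(A,B) \leq s-1$ in the reduced graph, which combined with the degree-sum hypothesis forces $\delta(B,A) \geq s - 2 + a + b \geq s$ (using $a, b \geq 1$). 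I then argue, via the edge count $\delta(B,A)|B| \leq (s-1)(|A| - |A^*|) + \Delta(A,B)|A^*|$ for $A^* := \{u \in A : \deg(u,B) \geq s\}$, that $|A^*| \geq a$, and finally extend $a$ centers chosen from $A^*$ with $s$ fresh leaves each, using $\Delta(A,B) \leq 2\alpha^{1/3}k_2 s$ to argue that overlaps with $V(\mathcal{S}_B)$ and with previously-chosen leaves are confined to a negligible fraction of $B$.

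The main obstacle will be verifying $|A^*| \geq a$ (and the analogous positivity of the $g_s$-numerator in case (ii)) in the regime where $|A|$ and $|B|$ differ significantly: the size bounds permit $|A|/|B|$ to grow as large as roughly $k_2/k_1$, which the global hypothesis $\delta_U \geq \lambda n$ constrains only by $k_1/k_2 \geq \lambda/(1-\lambda)$. The degree-sum thresholds $2s-3+a+b$ and $2s-2+b$ are exactly the tight values that make the counting close, and I expect to split the analysis by the magnitude of $k_2/k_1$ and possibly by whether $|A| \geq |B|$, using in the more extreme subregimes that $\Delta(A,B), \Delta(B,A)$ are both $O(\alpha^{1/3}k_2 s)$ to rule out configurations where $A^*$ could be small.
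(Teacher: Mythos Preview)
Your proposal has a genuine gap: you assume that ``the canonical decomposition gives $a,b\le s-1$,'' but the lemma only constrains $b$; nothing bounds $a$, and in the paper's applications $a$ can be as large as order $\alpha^{2/3}n$. This matters because your plan for case (i) after building $\mathcal{S}_B$ is, in the subcase $\delta(A,B)\le s-1$, to exhibit a set $A^*\subseteq A$ of size $\ge a$ with $\deg(u,B)\ge s$ for $u\in A^*$ and then greedily extend $a$ centers to disjoint $s$-stars. Even granting $|A^*|\ge a$, the greedy step fails when $a$ is large: a center in $A^*$ is only guaranteed $s$ neighbours in $B$, so after using $\Theta(as)$ leaves you cannot avoid collisions. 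Running Zhao's $f_s$ bound on $A^*$ instead gives $f_s\ge |A^*|/(2s^2\alpha^{1/3}k_2+1)$, and your lower bound on $|A^*|$ is only of order $a\cdot n/(\alpha^{1/3}k_2 s)$, so the quotient is of order $a\cdot n/(\alpha^{2/3}k_2^2 s^3)$, which is \emph{not} $\ge a$ for large $n$.

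The paper handles this by singling out the regime $a>1/\gamma$ (with $\alpha^{1/3}\ll\gamma\ll 1/(2s)$) first. There the degree-sum hypothesis forces one of $\delta(A,B),\delta(B,A)$ to be at least $\tfrac12(2s-3+a+b)>a/4$, and Zhao's lemma applied with this large minimum degree (parts (v)/(vi) of Lemma~\ref{STARSlemma}) directly yields $b$ disjoint $s$-stars from $B$ to $A$ and then, after deleting their $\le bs$ leaves, still leaves minimum degree $>a/4$ to produce $a$ disjoint $s$-stars from $A$ to $B$ in one shot. Your sequential ``$\mathcal{S}_B$ first, then $\mathcal{S}_A$'' scheme is fine for bounded $a$ and for case (ii), and there your argument is essentially the paper's Cases~2 and~3; but you need this separate large-$a$ mechanism. (Your closing worry about the $|A|/|B|$ ratio is real but orthogonal: both your argument and the paper's invocation of Lemma~\ref{STARSlemma}(ii),(iii) silently use $k_1>(1-\tfrac{1}{2s})k_2$, which holds wherever the lemma is applied.)
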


\begin{proof}
Let $\gamma$ be a real number such that $\alpha^{1/3}\ll \gamma\ll \frac{1}{2s}$.  

\noindent
\textbf{Case 1} $a>\frac{1}{\gamma}$.  Suppose first $\delta(B, A)\geq \frac{1}{2}(2s-3+a+b)$.  In this case we apply Lemma \ref{STARSlemma}(vi) to get a set of $b$ vertex disjoint $s$-stars with centers $C\subseteq B$ and leaves $L\subseteq A$.  Then since $\delta(B, A\setminus L)\geq \frac{1}{2}(2s-3+a+b)-bs>\frac{a}{4}$ we apply Lemma \ref{STARSlemma}(v) to get a set of $a$ vertex disjoint $s$-stars from $A\setminus L$ to $B\setminus C$.  Now suppose $\delta(A, B)> \frac{1}{2}(2s-3+a+b)$.  As before, we apply Lemma \ref{STARSlemma}(v) to get a set of $b$ vertex disjoint $s$-stars with centers $C\subseteq B$ and leaves $L\subseteq A$.  Then since $\delta(A, B\setminus C)> \frac{1}{2}(2s-3+a+b)-b>\frac{a}{4}$ we apply Lemma \ref{STARSlemma}(vi) to get a set of $a$ vertex disjoint $s$-stars from $A\setminus L$ to $B\setminus C$.

\noindent
\textbf{Case 2} $1\leq a\leq \frac{1}{\gamma}$.  Suppose first that $\delta(B, A)\geq s-1+a$.  We apply Lemma \ref{STARSlemma}(ii) to get a set of $a$ vertex disjoint $s$-stars with centers $C\subseteq A$ and leaves $L\subseteq B$.  We still have $\delta(B\setminus N(C), A\setminus C)\geq s-1+a$ and $|B\setminus N(C)|\geq |B|-\frac{2\alpha^{1/3}}{\gamma}k_2s\geq \alpha^{1/6}|A|$, thus we can apply Lemma \ref{STARSlemma}(i) to get a set of $b$ vertex disjoint $s$-stars from $B\setminus N(C)$ to $A\setminus C$.  Now suppose $\delta(A, B)\geq s+b$.  We apply Lemma \ref{STARSlemma}(ii) to get a set of $b$ vertex disjoint $s$-stars with centers $C\subseteq B$ and leaves $L\subseteq A$.  We still have $\delta(A\setminus L, B\setminus C)\geq s+b-b=s$ so we apply Lemma \ref{STARSlemma}(i) to get $a$ vertex disjoint $s$-stars from $A\setminus L$ to $B\setminus C$.

\noindent
\textbf{Case 3} $a=0$.  We have $\delta(A, B)+\delta(B, A)\geq 2s-2+b\geq 2s-1$ and thus $\delta(A, B)\geq s$ or $\delta(B, A)\geq s$.  In either case we can apply Lemma \ref{STARSlemma}(i) or (iii) to get a set of $b$ vertex disjoint $s$-stars from $B$ to $A$.

\end{proof}

In addition, we will use the following fact from \cite{CD}.

\begin{lemma}\label{K_1sum}
Suppose $|U_0|\geq s$.  Let $V_1'\subseteq V_1$ and $V_2'\subseteq V_2$ such that $\delta(V_1', U_0)+\delta(V_2', U_0)\geq |U_0|+s$.  If $|V_1'|\geq\frac{n}{8}$ and $|V_2'|\geq\frac{n}{8}$, then for any $0\leq b\leq s$, there is a $K_{s,s}=:K$ with $s$ vertices in $U_0$, $b$ vertices in $V_1$ and $s-b$ vertices in $V_2$. 
\end{lemma}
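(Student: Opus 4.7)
The plan is to exploit the degree-sum hypothesis $\delta(V_1',U_0)+\delta(V_2',U_0)\ge |U_0|+s$ through the inclusion-exclusion identity: for any $v\in V_1'$ and $w\in V_2'$,
\begin{equation*}
|N(v)\cap N(w)\cap U_0|\ge \delta(V_1',U_0)+\delta(V_2',U_0)-|U_0|\ge s.
\end{equation*}
So each cross-pair $(v,w)\in V_1'\times V_2'$ already pins down a candidate $s$-subset $T\subseteq U_0$ that could serve as the $U_0$-side of the desired $K_{s,s}$. In particular, writing $\delta_i:=\delta(V_i',U_0)$, since $\delta_i\le |U_0|$, we automatically obtain $\delta_1,\delta_2\ge s$, which is the minimum prerequisite for any $s$-subset of $N(v)\cap U_0$ to exist at all.

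For the interior range $1\le b\le s-1$, I would first fix one $v_1\in V_1'$ and one $w_1\in V_2'$ and focus on the set $C:=N(v_1)\cap N(w_1)\cap U_0$, which by the above has $|C|\ge s$. Since $|V_1'|,|V_2'|\ge n/8$ is huge compared with the number of $s$-subsets of $C$, a pigeonhole on the counts $\sum_{v\in V_i'}\binom{|N(v)\cap C|}{s}$ over the $s$-subsets $T\subseteq C$ should produce some $T$ that is contained in the neighborhoods of at least $b$ vertices of $V_1'$ and simultaneously at least $s-b$ vertices of $V_2'$. Taking that $T$ together with those $b$ and $s-b$ vertices gives the required $K_{s,s}$.

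For the boundary cases $b=0$ and $b=s$, the desired $K_{s,s}$ sits entirely inside the bipartite graph $(U_0,V_2')$ or $(U_0,V_1')$, respectively. Here I would apply the double-counting estimate $\sum_{v\in V_i'}\binom{\deg(v,U_0)}{s}\ge |V_i'|\binom{\delta_i}{s}$ distributed over the $\binom{|U_0|}{s}$ candidate $s$-subsets of $U_0$, which by pigeonhole forces some $T\subseteq U_0$ of size $s$ to lie in the neighborhoods of at least $s$ vertices of $V_i'$, producing the $K_{s,s}$.

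The main technical obstacle I anticipate is making the pigeonhole sharp in the boundary cases when one of $\delta_1,\delta_2$ is near its minimum forced value of $s$. The key observation that makes things go through is that in this regime the other side of the hypothesis forces $\delta_{3-i}$ to be nearly $|U_0|$, so vertices of $V_{3-i}'$ are almost completely joined to $U_0$; this near-total connectivity on one side lets us reduce the problematic $b\in\{0,s\}$ situation to the easier interior case by shifting one of the required vertices from the sparse side to the dense side when needed, and otherwise the available size $|V_i'|\ge n/8$, combined with the structural constraint on $|U_0|$ inherited from the extremal-case preprocessing in Claim~\ref{preprocess}, is enough to carry the pigeonhole.
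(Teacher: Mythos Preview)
The paper does not prove this lemma here; it is quoted as a fact from~\cite{CD}. Your proposed argument, however, has a genuine gap already in the interior range $1\le b\le s-1$. After fixing $v_1\in V_1'$, $w_1\in V_2'$ and setting $C=N(v_1)\cap N(w_1)\cap U_0$, you pigeonhole $\sum_{v\in V_i'}\binom{|N(v)\cap C|}{s}$ over the $s$-subsets of $C$. But the degree-sum hypothesis only controls \emph{cross} pairs: it gives $|N(v)\cap N(w)\cap U_0|\ge s$ for $v\in V_1'$ and $w\in V_2'$, yet says nothing about the threefold intersection $N(v)\cap N(v_1)\cap N(w_1)\cap U_0$ for a second vertex $v\in V_1'$ (or a second $w\in V_2'$). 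One can easily have every $v\in V_1'\setminus\{v_1\}$ satisfy $N(v)\cap C=\emptyset$, so your sum collapses to the single contribution $\binom{|C|}{s}$ coming from $v_1$ itself, and the pigeonhole yields only one common neighbour in $V_1'$, not $b$.

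The boundary cases $b\in\{0,s\}$ fare no better. Your pigeonhole requires $|V_i'|\binom{\delta_i}{s}\ge s\binom{|U_0|}{s}$; with $\delta_i=s$ this reads $|V_i'|\ge s\binom{|U_0|}{s}$, and since Claim~\ref{preprocess} only bounds $|U_0|\le \alpha^{2/3}n$, the right side is of order $n^s$ and dwarfs $|V_i'|\ge n/8$ for every $s\ge 2$. Your suggested repair---exploit near-complete connectivity on the $V_{3-i}'$ side to ``shift one of the required vertices''---cannot rescue $b=s$, because there all $s$ of the $V$-vertices must lie in $V_1$. Concretely, with $\delta_1=s$, $\delta_2=|U_0|$, and $|U_0|=\alpha^{2/3}n$, one can give the $n/8$ vertices of $V_1'$ pairwise distinct $s$-element neighbourhoods in $U_0$, so that no $s$-set in $U_0$ has even two common neighbours in $V_1'$. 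A correct argument needs a different mechanism for producing simultaneous large common neighbourhoods on both sides; you should consult the original proof in~\cite{CD}.
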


\subsection{Case $k_2\gg k_1$}

In this section we prove Theorem \ref{main 2} and prove Theorem \ref{main 1} in the case that $k_1\leq (1-\frac{1}{2s})k_2$.  Let $G$ be a graph which satisfies the extremal condition and for which $k_1\leq (1-\frac{1}{2s})k_2$.  Recall the bounds from Claim \ref{preprocess}, specifically $k_1s-\alpha^{2/3}k_2s\leq|U_1|,|V_1|\leq k_1s+\alpha^{2/3}k_1s$, $k_2s-\alpha^{2/3}k_1s\leq|U_2|,|V_2|\leq k_2s+\alpha^{2/3}k_2s$, and $|U_0|,|V_0|\leq \alpha^{2/3}n$.  The fact that $\delta_U+\delta_V\geq n$ implies 
\begin{equation}\label{V1toU2:big}
\delta(V_1,U_2)\geq \delta_V-|U_0\cup U_1|\geq (k_2-k_1-2\alpha^{2/3}k_1)s
\geq(\frac{1}{2s} k_2-2\alpha^{2/3}k_1)s>\frac{1}{4s}k_2s.
\end{equation}


%

\begin{proof}
Note that $s-2\croot{s}+c(s)+1\geq 0$ with equality if and only if $s=2$, so $d$ is defined for all $s\geq 2$.  Let $\alpha^{1/3}\ll \gamma\ll \frac{1}{2s}$.  Let $\ell_1$ be maximal so that $|U_1|\geq \ell_1 s$ and $|V_0\cup V_1|\geq \ell_1 s$.  Let $y:=|U_1|-\ell_1 s$ and $z:=|V_0\cup V_1|-\ell_1 s$.  We note that $n+3s-5\geq n+2s-2\croot{s}+d+c(s)$ with equality if and only if $s=2$.  So for this proof we will assume $\delta_U+\delta_V\geq n+2s-2\croot{s}+d+c(s)$ with one exception that we point out.

\begin{claim}\label{V1>U1}
If there exists $\ell$ such that $|V_0\cup V_1|\geq \ell s$ and $|U_1|\leq \ell s$, then $G$ can be tiled with $K_{s,s}$.  
\end{claim}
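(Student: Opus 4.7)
The plan is to choose the updated partition $U = U_1^* \cup U_2^*$, $V = V_1^* \cup V_2^*$ so that $|U_1^*| = |V_1^*| = \ell s$ and $|U_2^*| = |V_2^*| = (m-\ell)s$, and then separately tile $G_1 = G[U_1^*, V_1^*]$ and $G_2 = G[U_2^*, V_2^*]$. As explained in the ``Idea of the Proof,'' once the sides balance and every exceptional vertex retains at least $s$ useful neighbors on the opposite main side, each $G_i$ is tiled by combining Lemma~\ref{STARSlemma} (to absorb exceptional vertices into disjoint $s$-stars) with the Blow-up Lemma on the nearly complete pair $(U_i,V_i)$, just as in the proof of Theorem~\ref{non-extreme}.

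I split on $y := |U_1| - \ell_1 s \in \{0,\ldots,s-1\}$. When $y = 0$, I take $\ell = \ell_1$ and set $U_1^* = U_1$, $U_2^* = U_0 \cup U_2$; this is safe since $\delta(U_0,V_2) \geq s$ by Claim~\ref{preprocess}(4). For $V$, I form $V_1^*$ by including all of $V_1$ (filling in from $V_0$ if $|V_1| < \ell s$, or truncating $V_1$ and sending the overflow to $V_2^*$ if $|V_1| > \ell s$), which is feasible because $|V_0 \cup V_1| \geq \ell s$; the $V_0$ vertices absorb into either $G_1$ or $G_2$ since $\delta(V_0,U_1), \delta(V_0,U_2) \geq s$ by Claim~\ref{preprocess}(5). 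When $1 \leq y \leq s-1$, the maximality of $\ell_1$ combined with the claim's hypothesis forces $z := |V_0 \cup V_1| - \ell_1 s \geq s$ (otherwise no $\ell$ would satisfy both inequalities), so I take $\ell = \ell_1 + 1$ and must enlarge $U_1^*$ by exactly $s - y \leq s-1$ ``special'' vertices, pulled from $U_2$ (or from $U_0$ if any are available). The $V$ side is handled as in the $y=0$ case.

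The main obstacle is selecting the $s - y$ special vertices from $U_2$ in the second subcase, because a priori a vertex of $U_2$ may have very small $V_1$-degree. I overcome this using equation~\eqref{V1toU2:big}, which gives $\delta(V_1,U_2) \geq k_2/4$ and hence by double counting $e(V_1,U_2) \geq |V_1| k_2/4$; a simple Markov-type argument then shows at least $k_2/8 \gg s$ vertices of $U_2$ have $\geq s$ neighbors in $V_1$, so I can pick the required $s-y$ special vertices. Each is absorbed into a distinct $K_{s,s}$ in $G_1$ via Lemma~\ref{STARSlemma} together with Fact~\ref{interprop} on the dense pair $(U_1,V_1)$. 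After these constantly many special vertices (plus the $O(\alpha^{2/3}n)$ ``tame'' exceptional vertices from $U_0 \cup V_0$, each with guaranteed star-support by Claim~\ref{preprocess}(4)--(5)) are absorbed, each $G_i$ reduces to a near-complete balanced bipartite graph whose sides are divisible by $s$, and the Blow-up argument from the proof of Theorem~\ref{non-extreme} finishes the tiling.
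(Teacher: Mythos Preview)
Your approach is essentially the paper's: both target $\ell=\ell_1+1$ when $1\le y\le s-1$ (and $\ell=\ell_1$ when $y=0$), import $s-y$ vertices from $U_2$ into $U_1$, and trim $V_0\cup V_1$ down to $\ell s$ using the large $\delta(V_1,U_2)$ from \eqref{V1toU2:big}. The one soft spot is your Markov step. Knowing that $k_2/8$ vertices of $U_2$ each have $\geq s$ neighbours in $V_1$ does not by itself yield $s-y$ special vertices with \emph{disjoint} $s$-leaf-sets in $V_1$, and disjointness is exactly what the absorption framework in the ``Idea of the Proof'' requires (two special vertices could share all $s$ neighbours). The paper sidesteps this by applying Lemma~\ref{Zhao lemma} to the pair $(V_1',U_2')$ after peeling off the $z-s$ star-centres, obtaining $s-y$ vertex-disjoint $s$-stars from $U_2'$ to $V_1'$ in one stroke. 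Your route is easily repaired---the same averaging shows many $U_2$-vertices have $\Omega(k_1)\gg s^2$ neighbours in $V_1$, so a greedy choice gives disjoint leaf-sets---but your citations of Lemma~\ref{STARSlemma} and Fact~\ref{interprop} do not actually apply here (neither is stated for this configuration; $(U_1,V_1)$ is nearly complete, not an $\ep$-regular pair), so you should either invoke Lemma~\ref{Zhao lemma} as the paper does or spell out the greedy argument.
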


\begin{proof}
Suppose there exists such an $\ell$.  By the choice of $\ell_1$, we can assume $|U_1|\leq (\ell_1+1)s$ and $|V_0\cup V_1|\geq (\ell_1+1)s$. By \eqref{V1toU2:big} we have $\delta(V_1,U_2)>\frac{1}{4s}k_2s\geq 2s\alpha^{2/3}n$ and thus we can greedily choose a set of $z-s$ vertex disjoint $s$-stars from $V_1$ to $U_2$ with centers $C_V$ and leaves $L_U$.  Let $V_1':=V_1\setminus C_V$ and $U_2':=U_2\setminus L_U$, since $\delta(V_1',U_2')\geq \frac{1}{8s} k_2s$ we may apply Lemma \ref{Zhao lemma} to the graph induced by $U_2'$ and $V_1'$ to get a set of $s-y$ vertex disjoint $s$-stars from $U_2'$ to $V_1'$.  We move the centers of the stars giving $|U_1|+(s-y)=(\ell_1+1)s=|V_0\cup V_1|-(z-s)$ and we are done.
\end{proof}

If $z\geq s$, then by the maximality of $\ell_1$ we have $y<s$ and thus we can apply Claim \ref{V1>U1} to finish.  If $y=0$, then we can also apply Claim \ref{V1>U1} to finish.  So for the rest of the proof, suppose that $0\leq z\leq s-1$ and $1\leq y$.  Our goal is to show that there exists a set $\mathcal{S}_U$ of vertex disjoint $(s-x)$-stars from $U_1$ to $V_2$ such that $|V_0\cup V_1|-x|\mathcal{S}_U|\geq |U_1|-|\mathcal{S}_U|=\ell_1 s$ and a set $\mathcal{T}_V$ of vertex disjoint $s$-stars from $V_1$ to $U_2$ so that $|V_0\cup V_1|-x|\mathcal{S}_U|-|\mathcal{T}_V|=\ell_1 s$ for some $0\leq x\leq s-1$.
Since $\delta_U+\delta_V\geq n+2s-2\croot{s}+d+c(s)$, we have 
\begin{align}
\delta(U_1,V_2)+\delta(V_2,U_1)&\geq n+2s-2\croot{s}+d+c(s)-|V_0\cup V_1|-|U_0\cup U_2|\notag\\
&\geq 2s-2\croot{s}+d+c(s)+y-z\label{diagonal}
\end{align}

\noindent
\textbf{Case 1} $|U_1|-|V_0\cup V_1|>0$.

\textbf{Case 1.1} $y\geq \frac{1}{\gamma}$. We have 
\begin{align*}
\delta(U_1,V_2)+\delta(V_2,U_1)\geq2s-2\croot{s}+d+c(s)+y-z\geq y+s-2\croot{s}+d+c(s)+1
\end{align*}
and thus there are two cases. Either $\delta(U_1,V_2)\geq \frac{1}{2}(y+s-2\croot{s}+d+c(s)+1)$ and we apply Lemma \ref{STARSlemma}(vi) to get $y$ vertex disjoint $s$-stars from $U_1$ to $V_2$ or $\delta(V_2,U_1)> \frac{1}{2}(y+s-2\croot{s}+d+c(s)+1)$ and we apply Lemma \ref{STARSlemma}(v) to get $y$ vertex disjoint $s$-stars from $U_1$ to $V_2$.  We move the centers from $U_1$ to $U_2$ to make $|U_1|=\ell_1 s$.  Then we move vertices from $V_0\cup V_1$ to $V_2$ to make $|V_0\cup V_1|=\ell_1 s$.

\textbf{Case 1.2} $y<\frac{1}{\gamma}$.


\textbf{Case 1.2.1.} $\delta(U_1,V_2)\geq s$. Apply Lemma \ref{STARSlemma}(i) with $x=0$ to get $y$ vertex disjoint $s$-stars from $U_1$ to $V_2$.

\textbf{Case 1.2.2.} $\delta(U_1,V_2)\leq s-1$.  By \eqref{diagonal} we have $\delta(V_2,U_1)\geq 2s-2\croot{s}+d+c(s)+y-z-(s-1)=s-2\croot{s}+d+c(s)+1+y-z\geq d+1$.  Since $k_2\geq (s-d)k_1$ and thus $|V_2|\geq (s-\frac{1}{2}-d)|U_1|\geq \frac{s-\frac{1}{2}}{d+1}|U_1|$, we can apply Lemma \ref{STARSlemma}(iv) to get $y$ vertex disjoint $s$-stars from $U_1$ to $V_2$.

\textbf{Case 2.} $|U_1|-|V_0\cup V_1|\leq 0$.  In this case we have $y\leq z$.  Rearranging \eqref{diagonal} gives 
\begin{equation}
\delta(U_1,V_2)+\delta(V_2,U_1)\geq 2s-2\croot{s}+d+c(s)-(z-y). \label{z-y}
\end{equation} 
Also since $k_1\leq \frac{k_2}{s-d}$, we have
\begin{align}
\delta(V_1, U_2)\geq \delta_V-|U_0\cup U_1|\geq (k_2-k_1-2\alpha^{2/3}k_1)s\geq(1-\frac{1+2\alpha^{2/3}}{s-d})k_2s 
&\geq  \frac{s-d-1-2\alpha^{2/3}}{(s-d)(1+\alpha^{2/3})}|U_2| \notag \\
&\geq \frac{s-d-1-\alpha^{1/3}}{s-d}|U_2|\label{V_1U_2}
\end{align}  

If $\delta_U+\delta_V\geq n+3s-5$, then \eqref{z-y} gives $\delta(U_1,V_2)+\delta(V_2,U_1)\geq 2s-3$ since $z-y\leq s-2$. Thus we have  $\delta(V_2, U_1)\geq s-1$ or $\delta(U_1, V_2)\geq s-1$.  In either case we can get $y$ vertex disjoint $(s-1)$-stars from $U_1$ to $V_2$ by Lemma \ref{STARSlemma}(iii) or Lemma \ref{STARSlemma}(i) with $x=1$.  For each $(s-1)$-star we choose a vertex from $V_1$ and $(s-1)$-vertices in $U_2$, which is possible by \eqref{V_1U_2} and $z\geq y$.  So for the rest of the proof we assume $\delta_U+\delta_V\geq n+2s-2\croot{s}+d+c(s)$.


\textbf{Case 2.1.} $z-y\leq s-2\croot{s}+c(s)+1$.  

\textbf{Case 2.1.1.} $\delta(U_1,V_2)\geq s-1$. We can get $y$ vertex disjoint $(s-1)$-stars from $U_1$ to $V_2$ by Lemma \ref{STARSlemma}(i) with $x=1$.  For each $(s-1)$-star we choose a vertex from $V_1$ and $(s-1)$-vertices in $U_2$, which is possible by \eqref{V_1U_2} and $z\geq y$.

\textbf{Case 2.1.2.} $\delta(U_1,V_2)\leq s-2$. So \eqref{z-y} and the condition of Case 2.2.1. gives $$\delta(V_2,U_1)\geq 2s-2\croot{s}+d+c(s)-(s-2\croot{s}+c(s)+1)-(s-2)=d+1.$$  We can get $y$ vertex disjoint $s$-stars from $U_1$ to $V_2$ by Lemma \ref{STARSlemma}(iv) as in Case 1.2.2.

\textbf{Case 2.2.} $z-y\geq s-2\croot{s}+c(s)+2$.  If $\delta(U_1,V_2)\geq s-1$ or $\delta(V_2,U_1)\geq d+1$, then we would be done as in the previous two cases.  So suppose $\delta(U_1,V_2)\leq s-2$ and $\delta(V_2,U_1)\leq d$ .  By \eqref{z-y}, we have
\begin{align}
s-2\geq s-x=\delta(U_1,V_2)&\geq 2s-2\croot{s}+d+c(s)-(z-y)-\delta(V_2, U_1)\label{s-x} \\
&\geq s-2\croot{s}+c(s)+2\geq d+1 \notag
\end{align}
for some $2\leq x\leq s-d-1$.

Let $\mathcal{S}_U$ be a set of $y$ vertex disjoint $(s-x)$-stars from $U_1$ to $V_2$, which exists by Lemma \ref{STARSlemma}(i).  For each $(s-x)$-star in $\mathcal{S}_U$ we will choose $s-1$ vertices from $U_2$ and $x$ vertices from $V_1$ to complete a copy of $K_{s,s}$. Let $u_1$ be the center of a star in $\mathcal{S}_U$ and let $v_1^1,v_1^2,\dots, v_1^x$ be a set of $x$ vertices in $N(u_1)\cap V_1$. By \eqref{V_1U_2}, we have $|N(v_1^1, v_1^2, \dots, v_1^x)\cap U_2|\geq \left(1-\frac{x(1+\alpha^{1/3})}{s-d}\right)|U_2|$.  Let $v_2^1,v_2^2,\dots, v_2^{s-x}$ be a set of $s-x$ vertices in $V_2$. By Claim \ref{preprocess}, we have $|N(v_2^1, v_2^2,\dots,v_2^{s-x})\cap U_2|\geq (1-(s-x)\alpha^{1/3})|U_2|$.  Thus 
$$ |N(v_1^1, v_1^2, \dots, v_1^x,v_2^1,v_2^2,\dots, v_2^{s-x})\cap U_2|\geq \left(1-\frac{x(1+\alpha^{1/3})}{s-d}-(s-x)\alpha^{1/3}\right)|U_2| \geq \alpha|U_2|$$ and we can choose $x$ vertices from $V_1$ and $s-1$ vertices from $U_2$ to turn each $s-x$ star into a copy of $K_{s,s}$.

Finally we must be sure that $|V_0\cup V_1|-xy\geq \ell s$, i.e. $z\geq xy$.  There are two cases.

\textbf{Case 2.2.1.} $1\leq q\leq p$ and consequently $c(s)=1$.  By \eqref{s-x} and $\delta(V_2, U_1)\leq d$, we get 
\begin{equation}
x+y\leq z-(s-2\croot{s}+1)\label{x+y first}
\end{equation}  
and thus
\begin{align*}
xy\leq\left(\frac{z-(s-2\croot{s}+1)}{2}\right)^2\leq z.
\end{align*}
The first inequality is by \eqref{x+y first} and the arithmetic mean-geometric mean inequality.  To verify the second inequality, let $F(z)=z-\left(\frac{z-(s-2\croot{s}+1)}{2}\right)^2$ and note $s-2\croot{s}+3\leq z\leq s-1$.  Using calculus, we see that $F$ achieves a maximum at $s-2\croot{s}+3$, $F$ is decreasing on the interval $[s-2\croot{s}+3, s-1]$ and $F(s-1)=s-1-(\croot{s}-1)^2=p^2+q-1-p^2\geq 0$.

\textbf{Case 2.2.2.} $q=0$ or $p+1\leq q\leq 2p$ and consequently $c(s)=0$.  By \eqref{s-x} and $\delta(V_2, U_1)\leq d$, we get
\begin{equation}
x+y\leq z-(s-2\croot{s}).\label{x+y second}
\end{equation}  
If $z=s-1$, then \eqref{x+y second} gives $x+y\leq 2\croot{s}-1$.  Since $2\croot{s}-1$ is odd, we have
\begin{align*}
xy\leq\left(\frac{2\croot{s}}{2}\right)\left(\frac{2\croot{s}-2}{2}\right)=\croot{s}(\croot{s}-1)\leq s-1=z
\end{align*}
where the last inequality holds by the assumption of this case.  So we may assume $z\leq s-2$.  So we have
\begin{align*}
xy\leq\left(\frac{z-(s-2\croot{s})}{2}\right)^2\leq z.
\end{align*}
The first inequality holds by (\ref{x+y second}) and the arithmetic mean-geometric mean inequality.  To verify the second inequality, let $F(z)=z-\left(\frac{z-(s-2\croot{s})}{2}\right)^2$ and note $s-2\croot{s}+2\leq z\leq s-2$.  Using calculus, we see that $F$ achieves a maximum at $s-2\croot{s}+2$, $F$ is decreasing on the interval $[s-2\croot{s}+2, s-2]$ and $F(s-2)=s-2-(\croot{s}-1)^2$.  When $q=0$ we have $p\geq 2$, and thus $F(s-2)=s-2-(\croot{s}-1)^2=p^2-2-(p^2-2p+1)=2p-3\geq 1$.  When $q\geq p+1$, we have $F(s-2)=s-2-(\croot{s}-1)^2=p^2+q-2-p^2=q-2\geq 0$.

\end{proof}

\subsection{Case $k_2\approx k_1$}

We are left to prove Theorem \ref{main 1} when $k_1> (1-\frac{1}{2s})k_2$.  
The proof is split into two cases depending on whether $s=2$ or $s\geq 3$.  
The proof of the $s\geq 3$ case follows a similar structure as the $s=2$ case, however the case analysis is extremely long and detailed.

We start with a graph which satisfies the extremal condition after pre-processing.  For $i=1,2$, let $U_i^M=\{u\in U_i:\deg(u, V_{3-i})>\alpha^{1/3}n\}$ and $V_i^M=\{v\in V_i: \deg(v, U_{3-i})>\alpha^{1/3}n \}$.  We call these vertices \emph{movable}.  Note that $U_1^M=\emptyset=V_2^M$ by Claim \ref{preprocess}.

\subsubsection{Case $s=2$}
Let $\gamma$ be a real number such that $\alpha^{1/3}\ll \gamma\ll \frac{1}{2s}$.  We assume that $n=2m$ and $\delta_V>\delta_U$, thus $\delta_V \geq \frac{n}{2}+1$. As a result
\begin{equation}\label{twocommon}
\forall v, v'\in V, |N(v)\cap N(v')|\geq 2
\end{equation}
Furthermore, since $\delta_V\geq \frac{n}{2}+1$, and since there is some vertex $u\in U$ with $\deg(u, V)\leq \frac{n}{2}$, 
\begin{equation}\label{u*}
\exists u^* \in U \mbox{ such that } \deg(u^*, V)\geq \frac{n}{2}+2.  
\end{equation}

\noindent
\textbf{Case 1.} $U_0\cup U_2^M\neq \emptyset$ or $|U_2|$ is even.  There are two cases: (i) $|V_0\cup V_1|>|U_1|$ 
or (ii) $|V_2|\geq |U_0\cup U_2|$.  If (i) is the case 
there exists some $\ell_1\in \mathbb{N}$, $X\subseteq U_0\cup U_2^M$, and $Y\subseteq V_0\cup V_1^M$ such that $|U_1\cup X|=\ell_1s$, $|(V_0\cup V_1)\setminus Y|\geq \ell_1s$ and $|(V_0\cup V_1)\setminus Y|-|U_1\cup X|$ is as small as possible.  If $|(V_0\cup V_1)\setminus Y|-|U_1\cup X|=0$, then we are done.  Otherwise there are no movable vertices left in $(V_0\cup V_1)\setminus Y$.   If (ii) is the case, then there exists some $\ell_2\in \mathbb{N}$ and $X\subseteq U_0\cup U_2^M$ with $|X|\leq 1$ such that $|(U_0\cup U_2)\setminus X|=\ell_2s$, $|V_2|\geq \ell_2s$ and $|V_2|-|(U_0\cup U_2)\setminus X|$ is as small as possible.

Notice that in either case, we are either done or there are no movable vertices left in $(V_0\cup V_1)\setminus Y$ or $V_2$.  Because of this symmetry we can suppose without loss of generality that that (i) is the case.  We reset $U_1:=U_1\cup X$ , $U_0:=(U_0\cup U_2^M)\setminus X$, $U_2:=U_2\setminus U_2^M$, $V_1:=V_1\setminus Y$, and $V_0:=V_0\cup Y$.  Let $\ell_2=m-\ell_1$.  Let $a:=|V_1|-\ell_1s$.  If $a=0$, then we are done, so suppose $a\geq 1$.  Note that there are no movable vertices in $V_1$ or $U_2$.  We have \begin{equation}\label{bothdirections}\delta(V_1, U_0\cup U_2)+\delta(U_0\cup U_2, V_1)\geq a+1.\end{equation} 

\textbf{Case 1.1.} $a>\frac{1}{\gamma}$.  We know that $|U_0|\leq 1$, otherwise we could make $a$ smaller by moving $2$ vertices from $U_0$ to $U_1$ while maintaining the fact that $|U_1|$ is even.  Either $\delta(V_1, U_2)\geq \delta(V_1, U_0\cup U_2)-1\geq \frac{a+1}{2}-1$ and we apply Lemma \ref{STARSlemma}(vi) to get $a$ vertex disjoint $2$-stars from $V_1$ to $U_2$ or else $\delta(U_0\cup U_2, V_1)> \frac{a+1}{2}$ and we apply Lemma \ref{STARSlemma}(v) to get $a$ vertex disjoint $2$-stars from $V_1$ to $U_2$.  We move the centers from $V_1$ to $V_2$ to make $|V_1|=\ell_1 s$.

\textbf{Case 1.2.} $a\leq \frac{1}{\gamma}$. If $\delta(U_0\cup U_2, V_1)\geq 2$, then we apply Lemma \ref{STARSlemma}(iii) to get a set of $a$ vertex disjoint $2$-stars from $V_1$ to $U_2$. So suppose $\delta(U_0\cup U_2, V_1)\leq 1$ and thus \begin{equation}\label{V1toU2}\delta(V_1, U_0\cup U_2)\geq a.\end{equation}

\textbf{Case 1.2.1.} $a\geq 3$.  We know that $|U_0|\leq 1$, otherwise we could make $a$ smaller by moving $2$ vertices from $U_0$ to $U_1$ while maintaining the fact that $|U_1|$ is even.  Since $a\geq 3$, we have $\delta(V_1, U_2)\geq \delta(V_1, U_0\cup U_1)-1\geq 2$ by \eqref{V1toU2}, and thus we can apply Lemma \ref{STARSlemma}(i) to get a set of $a$ vertex disjoint $2$-stars from $V_1$ to $U_2$.
So we only need to deal with the case $a\leq 2$.  

\textbf{Case 1.2.2.} $a=2$.  If $U_0=\emptyset$, then we can use \eqref{V1toU2} and apply Lemma \ref{STARSlemma}(i) to get a set of $a$ vertex disjoint $2$-stars from $V_1$ to $U_2$.  So suppose $U_0=\{u_0\}$.  If there is a vertex $u\in U_2$ with $\deg(u, V_1)=0$, then by (\ref{bothdirections}) we have $\delta(V_1, U_0\cup U_2)\geq 3$ and we are done since $\delta(V_1, U_2)\geq \delta(V_1, U_0\cup U_1)-1\geq 2$. So suppose $\delta(U_0\cup U_2)\geq 1$.  If there is a vertex $u\in U_2$ with $\deg(u, V_1)\geq 2$, then we can move $u_0$ and $u$ to $U_1$, thus for all $u\in U_2$, $\deg(u, V_1)=1$.  Now suppose there is a vertex $v_1\in V_1$ with $\deg(v_1, U_2)\geq 2$ and let $u_2, u_2'\in N(v)\cap U_2$.  Let $v_1'\in N(u_0)\cap (V_1\setminus \{v_1\})$.  Since $\Delta(U_2, V_1)\leq 1$, there exists some $u'\in (U_2\setminus \{u_2, u_2'\})\cap N(v_1')$.  Thus we can move $v_1$ and $v_1'$.  So for all $v\in V_1$, $\deg(v, U_2)=1$.  This implies that $\ell_2s-1=|U_2|=|V_1|=\ell_1s+2$, a contradiction.

\textbf{Case 1.2.3.} $a=1$.  If $U_0\neq \emptyset$, then let $u_0\in U_0$.  Let $u_2v_1\in E(V_1, (U_0\cup U_2)\setminus \{u_0\})$, which exists be \eqref{bothdirections}.  Let $v_2\in N(u_2)\cap V_2$.  By \eqref{twocommon}, $v_1$ and $v_2$ have a common neighbor $u'$ different than $u_2$.  If $u'\in U_0\cup U_2$, then we are done by simply moving $v_1$, so we have $u'\in U_1$ which completes a $K_{2,2}$.  Now we move $u_0$ to $U_1$ to finish.

Finally, suppose $U_0=\emptyset$.  If there exists a vertex $v\in V_1$ such that $\deg(v, U_2)\geq 2$, then we can move $v$ and be done.  So suppose $\Delta(V_1, U_2)\leq 1$.  Furthermore if there was a vertex $v\in V_1$ such that $\deg(v, U_2)=0$, then \eqref{bothdirections} would imply $\delta(U_2, V_1)\geq 2$ contradicting the fact that $\Delta(V_1, U_2)\leq 1$.  So every vertex in $V_1$ has exactly one neighbor in $U_2$ and \eqref{bothdirections} implies $\delta(U_2, V_1)\geq 1$.  Since $|U_2|$ is even and $|V_1|$ is odd, we must have $|V_1|\neq |U_2|$.  If $|U_2|>|V_1|$, then $\delta(U_2, V_1)\geq 1$ would imply that there was a vertex in $V_1$ with two neighbors in $U_2$, so suppose $|V_1|>|U_2|$. This implies that there exists some $u_0\in U_2$ such that $\deg(u_0, V_1)\geq 2$. Let $u_2v_1\in E(V_1, U_2\setminus \{u_0\})$, which exists be \eqref{bothdirections}.  Let $v_2\in N(u_2)\cap V_2$.  By \eqref{twocommon}, $v_1$ and $v_2$ have a common neighbor $u'$ different than $u_2$.  If $u'\in U_2$, then we are done by simply moving $v_1$, so we have $u'\in U_1$ which completes a $K_{2,2}$.  Now we move $u_0$ to $U_1$ to finish.

\noindent
\textbf{Case 2.} $U_0\cup U_2^M= \emptyset$ and $|U_2|$ is odd.  Now there are no movable vertices in $U_1$ or $U_2$.  So choose $\ell_1, \ell_2$ such that $|U_1|=\ell_1s+1$, $|U_2|=\ell_2s-1$.  If it is not the case that $|V_0\cup V_1|\geq \ell_1s+2$ or $|V_0\cup V_2|\geq \ell_2s$, then $V_0=\emptyset$, $|V_1|=\ell_1s+1$, $|V_2|=\ell_2s-1$, and $V_1^M=\emptyset$.  Without loss of generality, suppose $|V_0\cup V_1|\geq \ell_1s+1$.  
Let $b:=|V_1\cup V_0|-|U_1|$.


\textbf{Case 2.1.} $b=0$.  Note that since $b=0$, $U_0=V_0=U_2^M=V_1^M=\emptyset$ for $i=1,2$.  We first show that if there is a vertex $u_i\in U_i$ such that $\deg(u_i, V_{3-i})\geq 2$, then we would be done.  Without loss of generality, suppose there exists $u_1\in U_1$ such that $\deg(u_1, V_2)\geq 2$.  Let $v,v'\in N(u_1)\cap V_2$.  Since $\delta(V_1, U_2)+\delta(U_2, V_1)\geq 1$, there is an edge $v_1u_2\in E(V_1, U_2)$.  Let $v_2\in V_2\cap N(u_2)\setminus\{v,v'\}$.  By \eqref{twocommon} we know that $v_1$ and $v_2$ have a common neighbor $u_0$ which is different than $u_2$.  If $u_0\in U_1$, then we have a copy of $K_{2,2}$ with one vertex in each of $U_1, U_2, V_1, V_2$ and we are done, so suppose $u_0\in U_2$.  Then we choose $u'\in (N(v)\cap N(v'))\cap (U_2\setminus\{u_0\})$.  Thus we can move $u$ and $v_1$ to finish.  So we may suppose that 
\begin{equation}\label{atmost1}
\Delta(U_1, V_2), \Delta(U_2, V_1)\leq 1.
\end{equation}
By (\ref{u*}), there is a vertex $u^*\in U$ such that $\deg(u^*, V)\geq \frac{n}{2}+2$.  Without loss of generality, suppose $u^*\in U_1$.  Then by (\ref{atmost1}) we have $|U_1|=|V_1|\geq \frac{n}{2}+1$, which in turn implies that $|U_2|=|V_2|\leq \frac{n}{2}-1$.  However, now we have $\delta(V_2, U_1)\geq 2$, and thus there exists $u\in U_1$ such that $\deg(u, V_2)\geq 2$, contradicting (\ref{atmost1}).

\textbf{Case 2.2.} $b\geq 1$.  Suppose first that $|V_1\setminus V_1^M|\geq \ell_1s+3$.  Let $b_1':=|V_1\setminus V_1^M|-(\ell_1s+2)$.   We have 
$$
\delta(V_1\setminus V_1^M, U_2)+\delta(U_2, V_1\setminus V_1^M)\geq n+1-(\ell_1s+1+\ell_2s-2-b_1')=b_1'+2.
$$
So we apply Lemma \ref{lemma:diagonalsum}(i) with $A=V_1\setminus V_1^M$ and $B=U_2$ to get a set of $b_1'$ vertex disjoint $s$ stars from $V_1\setminus V_1^M$ to $U_2$ and one $s$-star from $U_2$ to $V_1\setminus V_1^M$.

So we may suppose $|V_1\setminus V_1^M|\leq \ell_1s+2$.  Reset $V_1:=V_1\setminus V_1^M$ and $V_0:=V_0\cup V_1^M$, then partition $V_0=V_0^1\cup V_0^2$ so that $|V_1\cup V_0^1|=l_1s+2$ and $|V_2\cup V_0^2|=l_2s-2$.  We have
\begin{equation}\label{b=1}
\delta(V_1\cup V_0^1, U_2)+\delta(U_2, V_1\cup V_0^1)\geq n+1-(\ell_1s+1+\ell_2s-2)=2.
\end{equation}
We first observe that if $\delta(V_1\cup V_0^1, U_2)\geq 2$, then there will be a vertex $u_2\in U_2$ such that $\deg(u_2, V_1)\geq 2$ in which case we would be done, so suppose not.  This implies that $|U_1|\geq \frac{n}{2}$.

First assume that $|V_0^1|\leq 1$.  By (\ref{b=1}), one of $\delta(U_2, V_1\cup V_0^1)\geq 2$ or $\delta(V_1\cup V_0^1, U_2)\geq 1$ must hold.  Since $|V_1\cup V_0^1|>|U_2|$, in either case there is a vertex $u\in U_2$ such that $\deg(u, V_1\cup V_0^1)\geq 2$, in which case we are done since $|V_0^1|\leq 1$.

So suppose $|V_0^1|\geq 2$.  Now if $\delta(V_2\cup V_0^2, U_1)\geq 2$, then there will be a vertex $u_1\in U_1$ such that $\deg(u_1, V_2)\geq 2$ in which case we would be done, since we can also move two vertices from $V_0^2$, so suppose not.  This implies that $|U_2|\geq \frac{n}{2}$ and since $|U_1|\geq \frac{n}{2}$, we have $|U_1|=|U_2|=\frac{n}{2}$.  So let $v_2\in V_2$ with $\deg(v_2, U_1)=1$ and let $v_1\in N(u_1)\cap V_1$.  By \eqref{twocommon}, $v_1$ and $v_2$ have a common neighbor in $U_2$ (since $\deg(v_2, U_1)=1$) which completes a $K_{2,2}$.  We finish by moving one additional vertex from $V_0^1$ to $V_2$.

\subsubsection{Case $s\geq 3$}

The following proof has many cases, so we provide an outline for reference.

\begin{easylist} 

\ListProperties(Style*=\bfseries)

# $|V_1|\leq k_1s$ and $|V_0\cup V_1|\leq k_1s+r$
 
# $\exists \ell_1\geq k_1$, $\exists Y\subseteq V_1^M$ and $\exists V_0'\subseteq V_0$ such that $|(V_1\setminus Y)\cup V_0'|=\ell_1s$.  

## $|V_1|\leq k_1s$

### $|V_0\cup V_1|\geq k_1s+s$




### $|V_0\cup V_1|<k_1s+s$






## $|V_1|>k_1s$

### $|V_1\setminus V_1^M|\leq k_1s$

#### $|U_0\cup U_2|\geq k_2s$

#### $|U_0\cup U_2|<k_2s$ 

##### $|V_0\cup V_1|\geq k_1s+s$

###### $|U_0\cup U_1|\geq k_1s+s$

###### $|U_0\cup U_1|<k_1s+s$

##### $|V_0\cup V_1|<k_1s+s$

### $|V_1\setminus V_1^M|>k_1s$

#### $\exists \ell_1$, $\exists Y\subseteq V_1^M$ such that $|V_1\setminus Y|=\ell_1s$ 

##### $|U_0\cup U_2|< \ell_2s$ (i.e. $|U_1|>\ell_1s$)

##### $|U_0\cup U_2|\geq \ell_2s$

#### $\exists \ell_1$, $\exists V_0'\subseteq V_0$ such that $|V_1\cup V_0'|=\ell_1s$ 

##### $|U_0\cup U_2|<\ell_2 s$ 

##### $|U_0\cup U_2|\geq \ell_2 s$

%
%
%

%
%
%

# For some $\ell_1\geq k_1$ we have $\ell_1s<|V_1\setminus V_1^M|\leq |V_1\cup V_0|<\ell_1s+s$

## $|U_2\setminus U_2^M|\geq \ell_2s$

## $|U_2\setminus U_2^M|<\ell_2s$

### $|U_0\cup U_1|\geq \ell_1s+s$

#### $|U_1|\leq \ell_1s$

#### $|U_1|>\ell_1s$

##### $\ell_1>k_1$

##### $\ell_1=k_1$



%
%
%

%
%
%

### $\ell_1s<|U_0\cup U_1|<\ell_1s+s$

#### $|U_1|\leq \ell_1s$
%
%
%
%
%


#### $|U_1|>\ell_1s$






##### For some $i\in\{1,2\}$ we have $\delta(V_i, U_{3-i})\geq s$ or $\delta(U_{3-i}, V_i)\geq s$

##### For all $i\in\{1,2\}$ we have $\delta(V_i, U_{3-i})<s$ and $\delta(U_{3-i}, V_i)<s$

\end{easylist}

Recall the following definitions. For $i=1,2$, $U_i^M=\{u\in U_i:\deg(u, V_{3-i})>\alpha^{1/3}n\}$ and $V_i^M=\{v\in V_i: \deg(v, U_{3-i})>\alpha^{1/3}n \}$. Also recall $U_1^M=\emptyset=V_2^M$ by Claim \ref{preprocess}.

\noindent
\textbf{Case 1} $|V_1|\leq k_1s$ and $|V_0\cup V_1|\leq k_1s+r$.  Let $b_2:=|V_2|-k_2s$ and note that $b_2\geq -r$.  We have 
\begin{equation} 
\label{eq1.1} \delta(U_1, V_2)\geq k_1s+s+r-(k_1s-b_2)\geq s+r+b_2\geq s. 
\end{equation}

\begin{claim}\label{Claim1}
If $|V_0\cup V_1|\geq k_1s$, then there exists $V_0'\subseteq V_0$ such that $|V_1\cup (V_0\setminus V_0')|=k_1s$.  If $|V_0\cup V_1|<k_1s$, then there exists a set of vertex disjoint $s$-stars with centers $C\subseteq V_2$  and leaves in $U_1$ such that $|V_0\cup V_1|+|C|=k_1s$.
\end{claim}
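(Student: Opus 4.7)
\textbf{Proof proposal for Claim \ref{Claim1}.}

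The first half is essentially a counting observation. Since we are in Case~1, $|V_1|\leq k_1s$, so $|V_0|=|V_0\cup V_1|-|V_1|\geq |V_0\cup V_1|-k_1s\geq 0$. Thus we may choose any $V_0'\subseteq V_0$ with $|V_0'|=|V_0\cup V_1|-k_1s$, and by construction $|V_1\cup(V_0\setminus V_0')|=k_1s$.

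For the second half, set $t:=k_1s-|V_0\cup V_1|\geq 1$. The key algebraic observation is that $|V_0|+|V_1|+|V_2|=n=(k_1+k_2)s$ forces $|V_2|=k_2s+t$, so $b_2=t$ in the notation introduced just before the claim. Combining with \eqref{eq1.1} yields
\[
\delta(U_1,V_2)\geq s+r+b_2=s+r+t\geq s.
\]
By Claim \ref{preprocess}(vii) we also have $\Delta(V_2,U_1)\leq 2\alpha^{1/3}k_2s$, and the case we are in assumes $k_1>(1-\tfrac{1}{2s})k_2$.

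The plan is now to produce $t$ vertex-disjoint $s$-stars with centers in $V_2$ and leaves in $U_1$ by a direct application of Lemma \ref{STARSlemma} with $A:=U_1$, $B:=V_2$, $b:=t$. We split on whether $t$ is bounded or grows with $n$. If $t<\tfrac{1}{\gamma}$, then all hypotheses of Lemma \ref{STARSlemma}(iii) hold, producing the required stars. If instead $t\geq \tfrac{1}{\gamma}$, then since $\delta(U_1,V_2)\geq s+r+t\geq t/4$, the hypotheses of Lemma \ref{STARSlemma}(v) are satisfied, again producing $t$ vertex-disjoint $s$-stars of the desired type. Letting $C\subseteq V_2$ be the set of centers we have $|C|=t$, and hence $|V_0\cup V_1|+|C|=k_1s$.

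The only point that requires any care is the potentially large regime $t\geq \tfrac{1}{\gamma}$: one must verify that $\delta(U_1,V_2)$ scales with $t$ so that Lemma \ref{STARSlemma}(v) is applicable. This is precisely where the identity $b_2=t$ (forced by $|V_2|=n-|V_0\cup V_1|$) does the work, making the degree bound $\delta(U_1,V_2)\geq s+r+t$ automatically sufficient. I do not anticipate any further obstacles.
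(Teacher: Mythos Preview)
Your argument is correct. The first half is identical to the paper's (a one-line counting observation using $|V_1|\leq k_1s$). For the second half, the paper takes a marginally shorter route: since $\delta(U_1,V_2)\geq s+r+b_2\geq s-1+b_2$ and we are in the case $k_1>(1-\tfrac{1}{2s})k_2$, a single application of Lemma~\ref{STARSlemma}(ii) (which has no upper bound on $b$ and needs no $\Delta$ hypothesis) already yields the $b_2=t$ stars without splitting on the size of $t$. Your two-case split via parts (iii) and (v) is perfectly valid but unnecessary here; part (ii) subsumes both regimes because the degree condition $\delta(A,B)\geq s-1+b$ scales with $b$ automatically.
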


\begin{proof}
If $|V_0\cup V_1|\geq k_1s$, we just choose $V_0'\subseteq V_0$ such that $|V_1\cup (V_0\setminus V_0')|=k_1s$.  Otherwise $b_2\geq 0$ and thus by \eqref{eq1.1} and $\Delta(V_2, U_1)<2\alpha^{1/3}k_2s$, we can apply Lemma \ref{STARSlemma}(ii) to get a set of $b_2$ vertex disjoint $s$-stars from $V_2$ to $U_1$ with centers $C$.  So we have $|V_0\cup V_1\cup C|=k_1s$.
\end{proof}

Let $a_2:=|U_2|-k_2s$. We have two cases.

Suppose $a_2\geq 0$. Claim \ref{k2approxk1} gives $\delta(V_1, U_2)\geq k_2s+2s-5-r-(k_1s-a_2)\geq s+a_2$.  So by Lemma \ref{STARSlemma}(ii) there are $a_2$ vertex disjoint $s$-stars from $U_2$ to $V_1$ with centers $C_U$.  So we can make $|U_0\cup U_1\cup C_U|=k_1s$ and apply Claim \ref{Claim1} to finish.

Suppose $a_2<0$. Then $|U_0\cup U_1|>k_1s$.  If $|U_1|\leq k_1s$, then there exists $U_0'\subseteq U_0$ such that $|U_1\cup (U_0\setminus U_0')|=k_1s$ and we apply Claim \ref{Claim1} to finish.  Otherwise $|U_1|>k_1s$ and let $a_1:=|U_1|-k_1s>0$.  If $b_2>0$, then we have $$\delta(U_1, V_2)+\delta(V_2, U_1)\geq 3s-5+a_1+b_2,$$ and we use Lemma \ref{lemma:diagonalsum}(i) to get a set of $a_1$ vertex disjoint $s$-stars from $U_1$ to $V_2$ with centers $C_U$ and a set of $b_2$ vertex disjoint $s$-stars from $V_2$ to $U_1$ with centers $C_V$.  Thus $|U_1\setminus C_U|=k_1s$ and $|V_0\cup V_1\cup C_V|=k_1s$.  Finally suppose $b_2\leq 0$, i.e. $|V_0\cup V_1|\geq k_1s$.  If there exists a set of $a_1$ vertex disjoint $s$-stars from $U_1$ to $V_2$, then we can apply Claim \ref{Claim1} to finish.  We show that such a set exists.  We have 
\begin{equation} 
\label{eq1.2} \delta(V_2, U_1)\geq k_2s+2s-5-r-(k_2s-a_1)=2s-5-r+a_1\geq s-4+a_1. 
\end{equation} 
If $a_1\leq 3$, we use \eqref{eq1.1} and Lemma \ref{STARSlemma}(i) with $x=0$ to get a set of $a_1$ vertex disjoint $s$-stars from $U_1$ to $V_2$ with centers $C_U$.  Otherwise $a_1\geq 4$ and we use \eqref{eq1.2} and Lemma \ref{STARSlemma}(iii) or (v) to get a set of $a_1$ vertex disjoint $s$-stars from $U_1$ to $V_2$ with centers $C_U$.

\noindent
\textbf{Case 2.} There exists $\ell_1\geq k_1$, $Y\subseteq V_1^M$ and $V_0'\subseteq V_0$ such that $|(V_1\setminus Y)\cup V_0'|=\ell_1s$.  Let $\ell_1\geq k_1$ be minimal.

\textbf{Case 2.1.}
$|V_1|\leq k_1s$.  By Case 1 we have $|V_0\cup V_1|>k_1s+r$.  This implies that there exists $V_0'\subseteq V_0$ such that $|V_1\cup V_0'|=k_1s$ and $|(V_0\cup V_2)\setminus V_0'|=k_2s$.  We now try to make $|U_1|=k_1s$ or $|U_2|=k_2s$.  Reset $U_2:=U_2\setminus U_2^M$ and $U_0:=U_0\cup U_2^M$.  Let $a_1:=|U_1|-k_1s$ and $a_2:=|U_2|-(k_2s-s)$. We have 
\begin{equation}\label{eq2.1.a}
\delta(V_2, U_1)\geq k_2s+2s-5-r-(k_2s-a_1)=2s-5-r+a_1
\end{equation}
and
\begin{equation}\label{eq2.1.b}
\delta(V_1, U_2)\geq k_2s+2s-5-r-(k_1s+s-a_2)=(k_2-k_1)s+s-5-r+a_2.
\end{equation}
If $|U_2|\geq k_2s$ i.e. $a_2\geq s$, then by \eqref{eq2.1.b} and Claim \ref{k2approxk1} we have $\delta(V_1, U_2)\geq s-1+(a_2-s)$ and thus Lemma \ref{STARSlemma}(ii) gives $a_2-s$ vertex disjoint $s$-stars from $U_2$ to $V_1$ with centers $C_U$ such that $|U_2\setminus C_U|=k_2s$.  Otherwise we have $|U_0\cup U_1|>k_1s$.  If $|U_1|\leq k_1s$, then we choose $U_0'\subseteq U_0$ such that $|U_1\cup (U_0\setminus U_0')|=k_1s$.  So suppose $|U_1|>k_1s$, i.e. $a_1>0$.  

\textbf{Case 2.1.1.} $|V_0\cup V_1|\geq k_1s+s$.  If $|U_0\cup U_1|\geq k_1s+s$, then we are done: either $a_1\leq s$ and we just choose $U_0'\subseteq U_0$ and $V_0'\subseteq V_0$ such that $|V_1\cup (V_0\setminus V_0')|=k_1s+s$ and $|U_1\cup (U_0\setminus U_0')|=k_1s+s$ or else $a_1> s$ and thus \eqref{eq2.1.a} gives $\delta(V_2, U_1)\geq 2s-4+(a_1-s)\geq s-1+(a_1-s)$ and thus Lemma \ref{STARSlemma}(ii) allows us to find $a_1-s$ vertex disjoint $s$-stars from $U_1$ to $V_2$.  So suppose $|U_0\cup U_1|<k_1s+s$ and thus $a_2>0$.  

$k_2=k_1$.  By Claim \ref{k2approxk1}, $r\leq \frac{s-6}{2}$ which implies $\delta(V_2, U_1)\geq s-1+a_1$ by \eqref{eq2.1.a}.  So there are $a_1$ vertex disjoint $s$-stars from $U_1$ to $V_2$ by Lemma \ref{STARSlemma}(ii).

$k_2=k_1+1$.  By Claim \ref{k2approxk1}, $r\leq s-3$ which implies $\delta(V_2, U_1)\geq s-2+a_1$ by (\ref{eq2.1.a}).  If $a_1\geq 2$ or $r\leq s-4$, then there are $a_1$ vertex disjoint $s$-stars from $U_1$ to $V_2$ by Lemma \ref{STARSlemma}(iii), so suppose $a_1=1$ and $r=s-3$.  Furthermore we have $\delta(V_1, U_2)\geq s-2+a_2$ by (\ref{eq2.1.b}).  If $a_2\geq 2$, then there are $a_2$ vertex disjoint $s$-stars from $U_2$ to $V_1$ by Lemma \ref{STARSlemma}(iii), so suppose $a_2=1$.  Note that we would be done unless $\Delta(U_1, V_2)\leq s-1$ and $\Delta(U_2, V_1)\leq s-1$.  Let $d_1:=k_1s-|V_1|$ and let $d_2:=k_2s-|V_2|$.  Note that $|V_0|=d_1+d_2\geq s$.  
Let $\hat{U_1}=\{u\in U_1:\deg(u, V_1)\leq k_1s-d_1-4\}$ and suppose that $\hat{U_1}\neq \emptyset$.  
So we have $$\delta(\hat{U_1}, V_0)+\delta(U_2, V_0)\geq 2(k_1s+s+r)-(k_1s-d_1-4+s-1)-(k_2s-d_2+s-1)\geq |V_0|+s.$$
This implies that we can find a $K_{s,s}$ with one vertex in $U_1$, $s-1$ vertices in $U_2$ and $s$ vertices in $V_0$.  So we may suppose that $\hat{U_1}=\emptyset$.  Note that $\delta(U_1, V_1)\geq k_1s-d_1-3=|V_1|-3$.  Since $\delta(V_1, U_2)\geq s-1$, there exists a set of $3s-2$ vertex disjoint $(s-1)$-stars from $U_2$ to $V_1$ with centers $C_U$.  Let $v_2\in N(C_U)\cap V_2$. Since $\delta(V_2, U_1)\geq s-1$, we can let $L_U\subseteq N(v_2)\cap U_1$ such that $|L_U|=s-1$.  Since $\delta(U_1, V_1)\geq |V_1|-3$, the leaves of at least one of the $(s-1)$-stars from $U_2$ to $V_1$ forms a $K_{s-1,s-1}$ with $L_U$.  This allows us to move a vertex $u_2\in U_2$ to $U_1$ and $v_2$ to $V_1$. This makes $|U_2\setminus \{u_2\}|=k_2s-s$, and we choose $V_0'\subseteq V_0$ such that $|V_0'\cup V_2\setminus\{v_2\}|=k_2s-s$.

$k_2\geq k_1+2$.  In this case, we see from \eqref{eq2.1.b} that $\delta(V_1, U_2)\geq 2s-4+a_2\geq s-1+a_2$.  So there are $a_2$ vertex disjoint $s$-stars from $U_2$ to $V_1$ by Lemma \ref{STARSlemma}(ii).  Then we choose $V_0'\subseteq V_0$ such that $|V_1\cup (V_0\setminus V_0')|=k_1s+s$.

\textbf{Case 2.1.2.} $|V_0\cup V_1|<k_1s+s$. Let $b_2:=|V_2|-(k_2s-s)$ and note that $b_2>0$.

$k_2=k_1$.  Then $r\leq \frac{s-6}{2}$ which implies $\delta(V_2, U_1)\geq s-1+a_1$ by (\ref{eq2.1.a}).  So by Lemma \ref{STARSlemma}(ii) there are $a_1$ vertex disjoint $s$-stars from $U_1$ to $V_2$. 

$k_2=k_1+1$.  Then $r\leq s-3$ which implies $\delta(V_2, U_1)\geq s-2+a_1$ by (\ref{eq2.1.a}).  If $a_1\geq 2$, then there are $a_1$ vertex disjoint $s$-stars from $U_1$ to $V_2$, so suppose $a_1=1$.  We have $|V_2|=k_2s-s+b_2=k_1s+b_2$.  If $b_2\geq 2$, then $|V_2|>|U_1|$ which together with $\delta(V_2, U_1)\geq s-1$ implies that there is a vertex in $U_1$ with at least $s$ neighbors in $V_2$, in which case we are done.  So suppose $b_1=1$ and thus $|V_2|=|U_1|$.  So if there is a vertex in $V_2$ with $s$ neighbors in $U_1$, then there is a vertex in $U_1$ with $s$ neighbors in $V_2$, so suppose not.  Together with $\delta(V_2, U_1)\geq s-1$, this implies that $G[U_1, V_2]$ is $(s-1)$-regular.  So we have $\delta(V_2, U_0\cup U_2)\geq k_2s+2s-5-r-(s-1)\geq k_2s-1=|U_0\cup U_2|$ which implies that $G[V_2, U_0\cup U_2]$ is complete, and thus we can choose a vertex $u_1\in U_1$ and a vertex $v_1\in N(u_1)\cap V_1$.  Since $\deg(u_1, V_2)=s-1$ and $\deg(v_1, U_0\cup U_2)\geq k_2s+2s-5-r-(k_1s+1)\geq 2s-3\geq s$ we can move $u_1$ and $v_1$.  Then we replace $v_1$ with a vertex from $V_0$ as $V_0\neq \emptyset$.

$k_2\geq k_1+2$. 

\begin{claim}\label{Claim 2123}
If $|U_0\cup U_1|\geq k_1s+s$ and $|U_1|\leq k_1s+s$, then there exists $U_0'\subseteq U_0$ such that $|(U_0\cup U_1)\setminus U_0'|=k_1s+s$. If $|U_0\cup U_1|< k_1s+s$, then there exists a set of vertex disjoint $s$-stars with centers $C\subseteq U_2$ and leaves in $V_1$ such that $|U_0\cup U_1|+|C|=k_1s+s$.
\end{claim}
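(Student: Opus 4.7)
\medskip
\noindent\textbf{Proof plan.} The claim naturally splits along its two hypotheses, and I would handle the two halves separately. The first half is pure counting and the second half is a single application of Lemma \ref{STARSlemma}(ii), where the slack comes from the standing assumption $k_2 \geq k_1+2$ of the current sub-case. The main bookkeeping is to translate ``number of stars needed'' into the parameter $a_2$ and then verify that the degree bound \eqref{eq2.1.b} comfortably exceeds $s-1+a_2$.

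Suppose first that $|U_0 \cup U_1| \geq k_1 s + s$ and $|U_1| \leq k_1 s + s$. Then $|U_0| = |U_0 \cup U_1| - |U_1| \geq (k_1 s + s) - |U_1| \geq 0$, so any $U_0' \subseteq U_0$ with $|U_0'| = |U_0 \cup U_1| - (k_1 s + s)$ satisfies $|(U_0 \cup U_1) \setminus U_0'| = k_1 s + s$, finishing this case.

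Suppose now that $|U_0 \cup U_1| < k_1 s + s$ and set $c_0 := k_1 s + s - |U_0 \cup U_1| > 0$. Since $|U_0 \cup U_1| + |U_2| = n = (k_1 + k_2) s$, we read off $|U_2| = k_2 s - s + c_0$, so the parameter $a_2 = |U_2| - (k_2 s - s)$ from Case 2.1 equals $c_0$. Feeding this into \eqref{eq2.1.b} gives
\[
\delta(V_1, U_2) \;\geq\; (k_2 - k_1)s + s - 5 - r + c_0 \;\geq\; 2s + s - 5 - (s-1) + c_0 \;=\; 2s - 4 + c_0 \;\geq\; s - 1 + c_0,
\]
where the middle step uses $k_2 \geq k_1 + 2$ and $r \leq s-1$, and the last step uses $s \geq 3$. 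The size hypotheses of Lemma \ref{STARSlemma}(ii) hold: the upper bounds $|V_1|, |U_2| \leq k_2 s + \alpha^{2/3} k_2 s$ come from Claim \ref{preprocess}, the lower bound $|V_1| \geq k_1 s - \alpha^{2/3} k_2 s$ again from Claim \ref{preprocess}, and the lower bound $|U_2| \geq k_1 s - \alpha^{2/3} k_2 s$ is immediate from $|U_2| > k_2 s - s \geq (k_1+1) s$. The condition $k_1 > (1 - \tfrac{1}{2s}) k_2$ is standing throughout this subsection. Applying Lemma \ref{STARSlemma}(ii) with $A = V_1$, $B = U_2$, and $b = c_0$ yields $c_0$ vertex-disjoint $s$-stars from $U_2$ to $V_1$; their centres $C \subseteq U_2$ have size $c_0$, so $|U_0 \cup U_1| + |C| = k_1 s + s$, as desired.

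There is really no substantive obstacle: the claim is a direct consequence of \eqref{eq2.1.b} once one notices the identification $c_0 = a_2$, and the hypothesis $k_2 \geq k_1 + 2$ supplies the extra $(k_2 - k_1) s \geq 2s$ of degree that is needed (beyond the baseline from $\delta_U + \delta_V \geq n+3s-5$) to push $\delta(V_1, U_2)$ above $s - 1 + c_0$.
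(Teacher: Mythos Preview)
Your proof is correct and follows essentially the same approach as the paper: the first half is pure counting, and the second half identifies $a_2$ with the number of stars needed, invokes \eqref{eq2.1.b} together with $k_2\geq k_1+2$ to get $\delta(V_1,U_2)\geq 2s-4+a_2\geq s-1+a_2$, and applies Lemma~\ref{STARSlemma}(ii). You supply a bit more detail than the paper (explicitly checking the size hypotheses of Lemma~\ref{STARSlemma}(ii) and noting $s\geq 3$), but the argument is the same.
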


\begin{proof}
Suppose first that $|U_0\cup U_1|\geq k_1s+s$ and $|U_1|\leq k_1s+s$.  Let $U_0'\subseteq U_0$ so that $|(U_0\cup U_1)\setminus U_0'|=k_1s+s$.  
Now suppose $|U_0\cup U_1|< k_1s+s$ and let $a_2:=|U_2|-(k_2s-s)$.  Since $k_2\geq k_1+2$, \eqref{eq2.1.b} gives $\delta(V_1, U_2)\geq 2s-4+a_2\geq s-1+a_2$ and thus by Lemma \ref{STARSlemma}(ii) there is a set of $a_2$ vertex disjoint $s$-stars with centers $C\subseteq U_2$ and leaves in $V_1$ such that $|U_0\cup U_2|+|C|=k_1s+s$.
\end{proof}

We have 
\begin{equation}
\delta(U_1, V_2)\geq k_1s+s+r-(k_1s+s-b_2)=r+b_2.
\end{equation}
If $r\geq s-b_2$, then $\delta(U_1, V_2)\geq s$ and we apply Lemma \ref{STARSlemma}(iii) to get a set of $b_2$ vertex disjoint $s$-stars from $V_2$ to $U_1$.  So suppose $r\leq s-1-b_2$.  By \eqref{eq2.1.a} we have 
\begin{equation}
\delta(V_2, U_1)\geq s-4+a_1+b_2.\label{2123}  
\end{equation}
We would be done unless $2\leq a_1+b_2\leq 3$. Note also that we have 
\begin{equation}
\delta(V_1, U_0\cup U_2)\geq k_2s+2s-5-r-(k_1s+a_1)\geq (k_2-k_1)s+s-4+b_2-a_1\geq 3s-4+b_2-a_1.\label{2123i}
\end{equation}

First suppose $b_2=2$ and $a_1=1$.  By \eqref{2123} we have $\delta(V_2, U_1)\geq s-1$, and since $|V_2|>|U_1|$ there exists some $u\in U_1$ such that $\deg(u, V_2)\geq s$.  Thus we can move one vertex from $U_1$.

Now suppose $b_2=1$.  If there is a vertex $v_2\in V_2$ such that $\deg(v_2, U_1)\geq s$, then $|(V_0\cup V_1)\cup\{v_2\}|=k_1s+s$ and we apply Claim \ref{Claim 2123} to finish.  So suppose $\Delta(V_2, U_1)\leq s-1$.  

If $a_1=2$, we have $\delta(V_2, U_0\cup U_2)\geq k_2s+2s-5-r-(s-1)\geq k_2s-2=|U_0\cup U_2|$ which implies that $G[V_2, U_0\cup U_2]$ is complete.  Since $\delta(V_2, U_1)\geq s-1$ and $|V_2|>|U_1|$, there is a vertex $u_1\in U_1$ such that $\deg(u_1, V_2)\geq s$ and since $\delta(V_2, U_1)\geq s-1$ and $\Delta(U_1, V_2)<2\alpha^{1/3}k_1s$, there is another vertex $u_1'\in U_1$ such that $\deg(u_1', V_2)\geq s-1$ and the neighborhoods of $u_1$ and $u_1'$ in $V_2$ are disjoint.  Let $v_1'\in N(u_1')\cap V_1$; by \eqref{2123i} $\deg(v_1', U_0\cup U_2)\geq s-1$ and thus since $G[V_2, U_0\cup U_2]$ is complete we can move $u_1$, $u_1'$ to make $|U_1|=k_1s$.

If $a_1=1$, we have $\delta(V_2, U_0\cup U_2)\geq k_2s+2s-5-r-(s-1)\geq k_2s-2=|U_0\cup U_2|-1$.  Since $\delta(V_2, U_1)\geq s-2$ and $|V_2|>|U_1|$, there is a vertex $u_1\in U_1$ such that $\deg(u_1, V_2)\geq s-1$.  Let $v_1\in V_1\cap N(u_1)$; by \eqref{2123i} we have $\deg(v_1, U_0\cup U_2)\geq 3s-4\geq 2s-1$.  Since $\delta(V_2, U_0\cup U_2)\geq |U_0\cup U_2|-1$, $K_{s-1,s-1}\subseteq G[N(u_1)\cap V_2, N(v_1)\cap (U_0\cup U_2)]$.  Thus we can move $u_1$.

\textbf{Case 2.2} $|V_1|>k_1s$.

\textbf{Case 2.2.1.} $|V_1\setminus V_1^M|\leq k_1s$.  Let $Y\subseteq V_1^M$ such that $|V_1\setminus Y|=k_1s$.

\textbf{Case 2.2.1.1.} $|U_0\cup U_2|\geq k_2s$. If $|U_2|\leq k_2s$, then there exists $U_0'\subseteq U_0$ such that $|U_1\cup U_0'|=k_1s=|V_1\setminus Y|$ and we are done.  If not, then we have $|U_2|>k_2s$. So let $a_2:=|U_2|-k_2s$.  We have $\delta(V_1, U_2)\geq k_2s+2s-5-r-(k_1s-a_2)=(k_2-k_1)s+2s-5-r+a_2\geq s-1+a_2$ by Claim \ref{k2approxk1}, and thus we can apply Lemma \ref{STARSlemma}(ii) to get a set of $a_2$ vertex disjoint $s$-stars from $U_2$ to $V_1$.  Since $|(V_0\cup V_2)\cup Y|=k_2s$, we are done.

\textbf{Case 2.2.1.2.} $|U_0\cup U_2|<k_2s$.  Set $a_1:=|U_1|-k_1s$ and note that $a_1\geq 1$.  We have \begin{equation}\label{V2U1-2212} \delta(V_2, U_1)\geq k_2s+2s-5-r-(k_2s-a_1)=2s-5-r+a_1.\end{equation}

\textbf{Case 2.2.1.2.1.} $|V_0\cup V_1|\geq k_1s+s$.

\textbf{Case 2.2.1.2.1.1.} $|U_0\cup U_1|\geq k_1s+s$. If $a_1\leq s$, we can choose $U_0'\subseteq U_0$ and $Y'\subseteq V_1^M\cup V_0$ so that $|(U_0\cup U_1)\setminus U_0'|=|(V_0\cup V_1)\setminus Y'|=k_1s+s$.  If $a_1>s$, then \eqref{V2U1-2212} implies $\delta(V_2, U_1)\geq 2s-4+(a_1-s)\geq s-1+(a_1-s)$ and thus we can apply Lemma \ref{STARSlemma}(ii) to get $a_1-s$ vertex disjoint $s$-stars from $U_1$ to $V_2$.  Now let $Y'\subseteq V_1^M\cup V_0$ so that $|U_1|-(a_1-s)=|(V_0\cup V_1)\setminus Y'|=k_1s+s$.

\textbf{Case 2.2.1.2.1.2.} $|U_0\cup U_1|<k_1s+s$.  Let $a_2=|U_2|-(k_2s-s)$.  We have 
\begin{equation}\label{V1U2-22122}
\delta(V_1, U_2)\geq k_2s+2s-5-r-(k_1s+s-a_2)=(k_2-k_1)s+s-5-r+a_2.
\end{equation}  

If $k_2=k_1$, then $r\leq \frac{s-6}{2}$.  By \eqref{V2U1-2212} we have $\delta(V_2, U_1)\geq \frac{3s-4}{2}+a_1\geq s-1+a_1$.  So by Lemma \ref{STARSlemma}(ii), we can move $a_1$ vertices from $U_1$ so that $|U_1|-a_1=k_1s=|V_1\setminus Y|$.

If $k_2=k_1+1$, then $r\leq s-3$. By \eqref{V1U2-22122} and \eqref{V2U1-2212} we have $\delta(V_1, U_2)\geq s-2+a_2$ and $\delta(V_2, U_1)\geq s-2+a_1$.  We would be done if either $\delta(V_1, U_2)\geq s$ or $\delta(V_2, U_1)\geq s$, because $|V_0\cup V_1|\geq k_1s+s$ and $|V_1\setminus V_1^M|\leq k_1s$.  So we may suppose $a_1=a_2=1$ and $r=s-3$.  We have $|V_1|\geq |U_2|$, $\delta(V_1, U_2)\geq s-1$, and at least one vertex $v_1\in V_1^M$ such that $\deg(v_1, U_2)\geq \alpha^{1/3}n$. Thus there is a vertex $u_2\in U_2$ such that $\deg(u_2, V_1)\geq s$.  So we have $|(U_0\cup U_2)\cup \{u_2\}|=k_1s+s$ and $|V_0\cup V_1|\geq k_1s+s$ with $|V_1\setminus V_1^M|\leq k_1s$ so we are done.

Finally, suppose that $k_2\geq k_1+2$.  We have $\delta(V_1, U_2)\geq (k_2-k_1)s+s-5-r+a_2\geq 2s-4+a_2\geq s-1+a_2$ since $s\geq 3$.  Thus we can find $a_2$ vertex disjoint $s$-stars from $U_2$ to $V_1$ by Lemma \ref{STARSlemma}(ii) and we have $|(U_0\cup U_1)|+a_2=k_1s+s$.  Since $|V_0\cup V_1|\geq k_1s+s$ and $|V_1\setminus V_1^M|\leq k_1s$ we are done.

\textbf{Case 2.2.1.2.2.} $|V_0\cup V_1|<k_1s+s$.  Set $b_2:=|V_2|-(k_2s-s)$ and $b_1:=|V_1|-k_1s$.  Note that  
$1\leq b_1, b_2\leq s-1$.

If $k_2=k_1$, then $r\leq \frac{s-6}{2}$ by Claim \ref{k2approxk1}. So by \eqref{V2U1-2212} we have $\delta(V_2, U_1)\geq \frac{3s-4}{2}+a_1\geq s-1+a_1$.  By Lemma \ref{STARSlemma}(ii), we can move $a_1$ vertices from $U_1$ so that $|U_1|-a_1=k_1s=|V_1\setminus Y|$.

If $k_2=k_1+1$, then $r\leq s-3$ and by \eqref{V2U1-2212} we have \begin{equation}\label{V2U1-221212}\delta(V_2, U_1)\geq s-2+a_1.\end{equation}  If $a_1\geq 2$ or $r\leq s-4$, then \eqref{V2U1-221212} gives $\delta(V_2, U_1)\geq s-2+a_1\geq s$ in which case we can apply Lemma \ref{STARSlemma}(iii) to get a set of $a_1$ vertex disjoint $s$-stars from $U_1$ to $V_2$. So suppose $a_1=1$ and $r=s-3$.  We have $\delta(U_1, V_2)\geq k_1s+s+r-(k_1s+s-b_2)=r+b_2\geq s-3+b_2$.  If $b_2\geq 3$, then we have $\delta(U_1, V_2)\geq s$ and thus we can move a single vertex from $U_1$ to make $|U_1|-a_1=k_1s=|V_1\setminus Y|$.  So suppose $1\leq b_2\leq 2$.  By \eqref{V2U1-221212}, we have $\delta(V_2, U_1)\geq s-1$. If $b_2=2$, then $|V_2|=k_1s+2>k_1s+1=|U_1|$ and since $\delta(V_2, U_1)\geq s-1$ there exists $u\in U_1$ such that $\deg(u, V_2)\geq s$. So we move $u$ to $U_2$ and $|U_1\setminus\{u\}|=k_1s=|V_1\setminus Y|$.  So we may suppose that $b_2=1$.  Since $\delta(V_2, U_1)\geq s-1$, if there was a vertex $v\in V_2$ such that $\deg(v, U_1)\geq s$, then there exists $u\in U_1$ such that $\deg(u, V_2)\geq s$ in which case we would be done.  So we can suppose $\Delta(U_1, V_2), \Delta(V_2, U_1)\leq s-1$.  Then since $\delta(V_2, U_1)\geq s-1$ by \eqref{V2U1-221212}, we have that $G[U_1, V_2]$ is $(s-1)$-regular.  So we have $\delta(V_2, U_0\cup U_2)\geq k_2s+2s-5-r-(s-1)\geq k_2s-1=|U_0\cup U_2|$ and thus $G[V_2, U_0\cup U_2]$ is complete.  Since $|V_1|=k_1s+1$ and $|V_1\setminus V_1^M|\leq k_1s$, there exists some $v_1\in V_1$ with $\deg(v_1, U_2)> \alpha^{1/3}n$.  Let $u_1\in U_1\cap N(v_1)$.  Since $\deg(u_1, V_2)=s-1$ and $G[V_2, U_0\cup U_2]$ is complete there is a copy of $K_{s,s}$ which contains $u_1$ and $v_1$.  Thus $|U_1\setminus\{u_1\}|=k_1s=|V_1\setminus Y|$.

Finally, suppose $k_2\geq k_1+2$.  We first prove the following claim.

\begin{claim}\label{Claim 22122}
If $|U_0\cup U_1|\geq k_1s+s$ and $|U_1|\leq k_1s+s$, then there exists $U_0'\subseteq U_0$ such that $|(U_0\cup U_1)\setminus U_0'|=k_1s+s$. If $|U_0\cup U_1|< k_1s+s$, then there exists a set of vertex disjoint $s$-stars with centers $C\subseteq U_2$ and leaves in $V_1$ such that $|U_0\cup U_1|+|C|=k_1s+s$.
\end{claim}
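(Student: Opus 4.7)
The plan is to mirror the two-step template of Claim \ref{Claim 2123} in Case 2.1.2, since the structural hypotheses here (namely $k_2\ge k_1+2$, $s\ge 3$, and the size bounds of Claim \ref{preprocess}) are exactly those that made that argument go through. The claim is purely a "size-balancing" statement, so the plan naturally splits into an accounting step and a stars-extraction step.

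For the first statement, the approach is a pigeonhole count: choose $U_0'\subseteq U_0$ with $|U_0'|=|U_0\cup U_1|-(k_1s+s)$. This choice is feasible because $|U_1|\le k_1s+s$ forces $|U_0|\ge |U_0\cup U_1|-(k_1s+s)$, and by construction $|(U_0\cup U_1)\setminus U_0'|=k_1s+s$.

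For the second statement, set $a_2:=|U_2|-(k_2s-s)$; the hypothesis $|U_0\cup U_1|<k_1s+s$ together with $|U|=n=ms$ forces $a_2\ge 1$. The plan is to produce $a_2$ vertex-disjoint $s$-stars with centers in $U_2$ and leaves in $V_1$ and take $C$ to be the set of centers, so that $|U_0\cup U_1|+|C|=k_1s+s$ as required. The degree bound needed to run Lemma \ref{STARSlemma}(ii) follows from $\delta_V\ge k_2s+2s-5-r$ and $|V\setminus V_1|\le k_1s+s-a_2$:
\[
\delta(V_1,U_2)\ \ge\ k_2s+2s-5-r-(k_1s+s-a_2)\ =\ (k_2-k_1)s+s-5-r+a_2.
\]
Under $k_2\ge k_1+2$, $s\ge 3$, and $r\le s-1$, this is at least $2s-4+a_2\ge s-1+a_2$, so Lemma \ref{STARSlemma}(ii) applied with $A=U_2$, $B=V_1$, $b=a_2$ yields the desired $a_2$ stars. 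The size prerequisites of that lemma follow from Claim \ref{preprocess} together with the running hypothesis $|V_1|>k_1s$ of Case 2.2.

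I do not expect a real obstacle: the only quantitative check is that $(k_2-k_1)s+s-5-r\ge s-1$, which is precisely why this claim is placed in the subcase $k_2\ge k_1+2$; the other subcases $k_2=k_1$ and $k_2=k_1+1$ required ad hoc arguments earlier in Case 2.2.1.2.2 rather than an appeal to this claim.
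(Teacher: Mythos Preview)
Your proof is correct and follows essentially the same approach as the paper's: the first part is the obvious counting argument, and the second derives the bound $\delta(V_1,U_2)\ge (k_2-k_1)s+s-5-r+a_2\ge s-1+a_2$ (this is exactly equation~\eqref{V1U2-22122} in the paper, which the paper simply cites here) and then invokes Lemma~\ref{STARSlemma}(ii). Two cosmetic slips worth fixing: in justifying the degree bound you should write $|U\setminus U_2|=|U_0\cup U_1|=k_1s+s-a_2$ rather than $|V\setminus V_1|$; and in your invocation of Lemma~\ref{STARSlemma}(ii) the roles of $A$ and $B$ are swapped---you want $A=V_1$, $B=U_2$, so that the hypothesis $\delta(A,B)\ge s-1+b$ matches your bound on $\delta(V_1,U_2)$ and the resulting stars go from $B=U_2$ to $A=V_1$ as required.
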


\begin{proof}
Suppose first that $|U_0\cup U_1|\geq k_1s+s$ and $|U_1|\leq k_1s+s$.  Let $U_0'\subseteq U_0$ so that $|(U_0\cup U_1)\setminus U_0'|=k_1s+s$.  
Now suppose $|U_0\cup U_1|< k_1s+s$ and let $a_2:=|U_2|-(k_2s-s)$.  Equation \eqref{V1U2-22122} holds in this case.  Since $k_2\geq k_1+2$, \eqref{V1U2-22122} gives $\delta(V_1, U_2)\geq 2s-4+a_2\geq s-1+a_2$ and thus by Lemma \ref{STARSlemma}(ii) there is a set of $a_2$ vertex disjoint $s$-stars with centers $C\subseteq U_2$ and leaves in $V_1$ such that $|U_0\cup U_2|+a_2=k_1s+s$.
\end{proof}

We have 
\begin{equation}
\delta(U_1, V_2)\geq k_1s+s+r-(k_1s+s-b_2)=r+b_2.\label{221:rb2}
\end{equation}
If $r\geq s-b_2$, then $\delta(U_1, V_2)\geq s$ and we can apply Lemma \ref{STARSlemma}(iii) to get a set of $a_1$ vertex disjoint $s$-stars from $U_1$ to $V_2$ giving $|U_1|-a_1=k_1s=|V_1\setminus Y|$.  So suppose $r\leq s-1-b_2$.  By \eqref{V2U1-2212} we have 
\begin{equation}
\delta(V_2, U_1)\geq s-4+a_1+b_2.\label{221:a1b2}
\end{equation}  
If $\delta(V_2, U_1)\geq s$, we would be done by moving $a_1$ vertices from $U_1$, so suppose $2\leq a_1+b_2\leq 3$.

If $b_2=2$ and $a_1=1$, then $\delta(V_2, U_1)\geq s-1$ and since $|V_2|>|U_1|$, there is a vertex $u\in U_1$ with $\deg(u, V_2)\geq s$, which we can move $|U_1|-a_1=k_1s=|V_1\setminus Y|$.

If $a_1=2$ and $b_2=1$, then $\delta(V_2, U_1)\geq s-1$ by \eqref{221:a1b2}.  If $r\leq s-3$, then \eqref{V2U1-2212} would give $\delta(V_2, U_1)\geq s$ in which case we would be done by moving two vertices from $U_1$, so suppose $r=s-2$. If there is a vertex $v_2\in V_2$ with $\deg(v_2, U_1)\geq s$, we can move $v_2$ so that $|(V_0\cup V_2) \cup \{v_2\}|=k_1s+s$ and apply Claim \ref{Claim 22122} to finish.  So suppose $\Delta(V_2, U_1)\leq s-1$. So for all $v\in V_2$, $\deg(v, U_0\cup U_2)\geq k_2s+2s-5-r-(s-1)=k_2s-2=|U_0\cup U_2|$, which implies $G[V_2, U_0\cup U_2]$ is complete.  
Since $|V_2|>|U_1|$ and $\delta(V_2, U_1)\geq s-1$, there is a vertex $u_1\in U_1$ with $\deg(u_1, V_2)\geq s$.  Let $L$ be a subset of $N(u_1)\cap V_2$ of size $s$.  Let $v_1\in V_1^M$ and note that $\delta(U_1, V_2)\geq s-1$ by \eqref{221:rb2} and the fact that $r=s-2$.  Since $\Delta(V_2, U_1)\leq s-1$ there must be a vertex $u_1'\in U_1\cap N(v_1)$ such that $\deg(u_1', V_2\setminus L)\geq s-1$.   Then since $G[V_2, U_0\cup U_2]$ is complete, $u_1$ and $v_1$ are contained in a copy of $K_{s,s}$.  Thus $|U_1\setminus\{u_1, u_1'\}|=k_1s=|V_1\setminus Y|$.

Now in the final case we have $a_1=1=b_2$.  If there were a vertex $v_2\in V_2$ such that $\deg(v_2, U_1)\geq s$, then $|(V_0\cup V_1)\cup \{v_2\}|=k_1s+s$ and we apply Claim \ref{Claim 22122} to finish.  So suppose $\Delta(V_2, U_1)\leq s-1$.  Since $r\leq s-2$, we have $\delta(V_2, U_0\cup U_2)\geq k_2s+2s-5-r-(s-1)\geq k_2s-2=|U_0\cup U_2|-1$.  Also $\delta(V_1, U_0\cup U_2)\geq k_2s+2s-5-r-(k_1s+1)\geq(k_2-k_1)s+s-4\geq 3s-4\geq 2s-2$.  Since $\delta(V_2, U_1)\geq s-2$ and $|V_2|>|U_1|$, there exists $u_1\in U_1$ with $\deg(u_1, V_2)\geq s-1$.  Let $v_1\in N(u_1)\cap V_1$.  Since $v_1$ has $2s-2$ neighbors in $U_0\cup U_2$ and $\delta(V_2, U_0\cup U_2)\geq |U_0\cup U_2|-1$ there is a copy of $K_{s,s}$ which contains $u_1$ and $v_1$ with $s-1$ vertices in $U_0\cup U_2$ and $s-1$ vertices in $V_2$.  If $v_1\in V_1^M$, then $|U_1\setminus\{u_1\}|=k_1s=|V_1\setminus Y|$. If $v_1\notin V_1^M$, then let $Y'\subseteq Y$ with $|Y'|=|Y|-1$ and thus  $|U_1\setminus\{u_1\}|=k_1s=|(V_1\setminus\{v_1\})\setminus Y'|$.

\textbf{Case 2.2.2.} $|V_1\setminus V_1^M|>k_1s$.

\textbf{Case 2.2.2.1.} $\exists \ell_1$, $\exists Y\subseteq V_1^M$ such that $|V_1\setminus Y|=\ell_1s$.  Choose $\ell_1$ minimal and note that $\ell_1>k_1$ by Case 2.2.2.  Let $\ell_2:=m-\ell_1$.

\textbf{Case 2.2.2.1.1.} $|U_0\cup U_2|< \ell_2s$.  Let $a_1:=|U_1|-\ell_1s$. We have $\delta(V_2, U_1)\geq k_2s+2s-5-r-(\ell_2s-a_1)=(k_2-\ell_2)s+2s-5-r+a_1\geq 2s-4+a_1\geq s-1+a_1$, and thus we can find a set of $a_1$ vertex disjoint $s$-stars from $U_1$ to $V_2$.  This gives $|U_1|-a_1=\ell_1s=|V_1\setminus Y|$.  

\textbf{Case 2.2.2.1.2.} $|U_0\cup U_2|\geq \ell_2s$.  If $|U_2|\leq \ell_2s$, then there exists $U_0'\subseteq U_0$ such that $|U_1\cup U_0'|=\ell_1s=|V_1\setminus Y|$.  Otherwise $|U_2|>\ell_2s$.  Set $a_2:=|U_2|-\ell_2s$.  

We have $|V_1\setminus Y|=\ell_1s$ and since $\ell_1>k_1$ and $\ell_1$ is minimal, we have $|V_1^M\setminus Y|<s$.  Set $b_1:=|V_1\setminus V_1^M|-(\ell_1s-s)$.  We have 
\begin{equation}\label{V1-Y}
\delta(V_1\setminus Y, U_2)+\delta(U_2, V_1\setminus Y)\geq n+3s-5-(\ell_1s-a_2+\ell_2s)=3s-5+a_2
\end{equation}
and 
\begin{equation}\label{V1-V1M}
\delta(V_1\setminus V_1^M, U_2)+\delta(U_2, V_1\setminus V_1^M)\geq n+3s-5-(\ell_1s-a_2+\ell_2s+s-b_1)=2s-5+b_1+a_2.  
\end{equation}
If $\delta(V_1\setminus Y, U_2)\geq s$, then there are $a_2$ vertex disjoint $s$-stars from $U_2$ to $V_1$ by Lemma \ref{STARSlemma}(iii) and we are done.  Otherwise by \eqref{V1-Y} we have $\delta(U_2, V_1\setminus Y)\geq 2s-4+a_2\geq s$.  If $\delta(U_2, V_1\setminus V_1^M)\geq s$, then since $\Delta(V_1\setminus V_1^M, U_2)<\alpha^{1/3}n$ we can apply Lemma \ref{STARSlemma}(iii) to get a set of $a_2$ vertex disjoint $s$-stars from $U_2$ to $V_1$.  Likewise if $\delta(V_1\setminus V_1^M, U_2)\geq s$.  These two facts, together with \eqref{V1-V1M} imply $2\leq a_2+b_1\leq 3$.  If $a_2=1$, then since $\delta(U_2, V_1\setminus Y)\geq 2s-3\geq s$ and we only need to move one vertex, we are done.  So we only need to deal with the case when $a_2=2$, $b_1=1$, and $\delta(U_2, V_1\setminus V_1^M)=s-1=\delta(V_1\setminus V_1^M, U_2)$. Since $b_1=1$ we have $|V_1^M\setminus Y|=s-1$. If there exists a vertex $u_2\in U_2$ such that $\deg(u_2, V_1\setminus V_1^M)\geq s$, then since $\delta(U_2, V_1\setminus Y)\geq s$, we either have another vertex disjoint $s$-star and we are done, or every vertex in $U_2$ must have a neighbor in $N(u_2)\cap (V_1\setminus V_1^M)$.  However this implies that some vertex in $v'\in N(u_2)\cap (V_1\setminus V_1^M)$ has $\deg(v', U_2)>\alpha^{1/3}n$ contradicting the fact that vertices in $V_1\setminus V_1^M$ are not movable.  So we have $\Delta(U_2, V_1\setminus V_1^M)\leq s-1$.  Since $\delta(U_2, V_1\setminus Y)\geq 2s-4+a_2=2s-2$, $\Delta(U_2, V_1\setminus V_1^M)\leq s-1$ and $|V_1^M\setminus Y|=s-1$, every vertex in $U_2$ is adjacent to every vertex in $V_1^M\setminus Y$.  Since $\delta(V_1\setminus V_1^M, U_2)=s-1$, we can choose $v_1\in V_1\setminus V_1^M$ and $u_2, u_2'\in N(v_1)\cap U_2$.  Thus $\{v_1\}\cup (V_1^M\setminus Y)$ and $\{u_2, u_2'\}$ form a $K_{s,2}$ and thus we can move $u_2,u_2'$ from $U_2$, giving $|U_0\cup U_1|+2=\ell_1s=|V_1\setminus Y|$.

\textbf{Case 2.2.2.2.} $\exists \ell_1$, $\exists V_0'\subseteq V_0$ such that $|V_0'\cup V_1|=\ell_1s$. Choose $\ell_1$ to be minimal and note that since we are in Case 2.2.2. but not Case 2.2.2.1. we have $|V_1\setminus V_1^M|> \ell_1s-s$ and thus
\begin{equation}
\ell_1\geq k_1+1.\label{l1k1}
\end{equation}
Set $\ell_2:=m-\ell_1$. Since $|V_1\setminus V_1^M|>\ell_1s-s$, we reset $V_1:=V_1\setminus V_1^M$, $V_0:=V_0\cup V_1^M$ and set $b_1:=|V_1|-(\ell_1s-s)$. 

\textbf{Case 2.2.2.2.1.} $|U_0\cup U_2|<\ell_2 s$.  Set $a_1:=|U_1|-\ell_1s$.  Then we have $\delta(V_2, U_1)\geq k_2s+2s-5-r-(\ell_2s-a_1)=(k_2-\ell_2)s+2s-5-r+a_1\geq 2s-4+a_1\geq s-1+a_1$, and thus we are done by Lemma \ref{STARSlemma}(ii).  

\textbf{Case 2.2.2.2.2.} $|U_0\cup U_2|\geq \ell_2 s$.  If $|U_2|\leq \ell_2s$, then there exists $U_0'\in U_0$ such that $|U_1\cup U_0'|=\ell_1s=|V_1\cup Y|$.  Otherwise $|U_2|>\ell_2s$.  Set $a_2:=|U_2|-\ell_2s$.  Note that if $\ell_2\geq \ell_1$, then $\ell_2s\geq \frac{n}{2}$ and consequently $\delta(V_1, U_2)\geq \frac{n+3s-4}{2}-(\ell_1s-a_2)\geq \frac{3s-4}{2}+a_2\geq s-1+a_2$.  Then by Lemma \ref{STARSlemma}(ii) we can move $a_2$ vertices from $U_2$ and we are done.  So for the rest of this case we may suppose that
\begin{equation}
\ell_2\leq \ell_1-1.\label{l2l1}
\end{equation}

Since $|U_2|=\ell_2s+a_2$, we have 
\begin{equation}\label{22222:V1U2}
\delta(V_1, U_2)+\delta(U_2, V_1)\geq n+3s-5-(\ell_1s-a_2+\ell_2s+s-b_1)=2s-5+a_2+b_1.  
\end{equation}
If $\delta(V_1, U_2)\geq s$ or $\delta(U_2, V_1)\geq s$, then we can apply Lemma \ref{STARSlemma}(i) or (iii) to get a set of $a_2$ vertex disjoint $s$-stars from $U_2$ to $V_1$, giving $|U_1|+a_2=\ell_1s=|V_0'\cup V_1|$. So suppose for the rest of the case that 
\begin{equation}\label{s-1:V1U2}
\delta(V_1, U_2)\leq s-1 \text{ and } \delta(U_2, V_1)\leq s-1.
\end{equation} 
Thus \eqref{22222:V1U2} and \eqref{s-1:V1U2} imply $2\leq a_2+b_1\leq 3$.  Furthermore, if $\delta(V_1, U_2)+\delta(U_2, V_1)= 2s-2$, then we have $\delta(V_1, U_2)= s-1$ and $\delta(U_2, V_1)= s-1$.  

\begin{claim}\label{Claim 22222}
If $|U_1|\leq \ell_1s-s$, then there exists $U_0'\subseteq U_0$ such that $|U_1\cup U_0'|=\ell_1s-s$.  If $|U_1|\geq \ell_1s-s+1$, then there exists a set of vertex disjoint $s$-stars with centers $C\subseteq U_1$ and leaves in $V_2$ such that $|U_1\setminus C|=\ell_1s-s$ or else $\delta(V_1, U_2)\geq s-2+a_2$.

\end{claim}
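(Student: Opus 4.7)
The claim splits into two parts according to whether $|U_1|\leq \ell_1 s-s$ or $|U_1|\geq \ell_1 s-s+1$. For the first part my plan is a direct cardinality count: setting $d:=(\ell_1 s-s)-|U_1|\geq 0$ and using $|U_2|=\ell_2 s+a_2$ together with $|U_0|+|U_1|+|U_2|=n=(\ell_1+\ell_2)s$, one obtains $|U_0|=s+d-a_2$. Since we are in the subcase of Case 2.2.2.2.2 where $a_2+b_1\in\{2,3\}$ and $b_1\geq 1$ force $a_2\in\{1,2\}\subseteq[1,s]$, this gives $|U_0|\geq d$, and any $d$-subset of $U_0$ serves as the required $U_0'$.

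For the substantive second part, set $c:=|U_1|-(\ell_1 s-s)\geq 1$; note that $|U_0|\geq 0$ forces $c+a_2\leq s$, so $c\leq s-1$. The plan is to extract the two basic degree bounds
\[
\delta(V_2,U_1)\geq (k_2-\ell_2)s+s-5-r+c,\qquad \delta(V_1,U_2)\geq (k_2-\ell_1)s+2s-5-r+a_2,
\]
by subtracting $|U_0\cup U_2|=n-|U_1|$ and $|U_0\cup U_1|=n-|U_2|$ respectively from $\delta_V=k_2s+2s-5-r$. From \eqref{l1k1} and \eqref{l2l1} we have $\ell_1\geq k_1+1$ and $\ell_2\leq \ell_1-1$, hence $k_2-\ell_2=\ell_1-k_1\geq 1$, and the main split is on whether this gap is $\geq 2$ or equals $1$. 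If $k_2-\ell_2\geq 2$, then $\delta(V_2,U_1)\geq 3s-5-r+c\geq s-1+c$ (using $r\leq s-1$ and $s\geq 3$), so Lemma \ref{STARSlemma}(ii)---applicable because the whole section lies in the regime $k_1>(1-\frac{1}{2s})k_2$---produces the required $c$ vertex-disjoint $s$-stars with centers in $U_1$ and leaves in $V_2$.

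If instead $k_2-\ell_2=1$, so $\ell_1=k_1+1$ and $\ell_2=k_2-1$, I would split further on $k_2-k_1$. When $k_2\geq k_1+2$, the $V_1$-to-$U_2$ bound already gives $\delta(V_1,U_2)\geq 3s-5-r+a_2\geq s-2+a_2$; when $k_2=k_1+1$, Claim \ref{k2approxk1} forces $r\leq s-3$, which upgrades the same estimate to $\delta(V_1,U_2)\geq s-2+a_2$; and when $k_2=k_1$ (which forces $s\geq 6$, since $\delta_V>\delta_U$ and $k_2=k_1$ require $r\leq (s-6)/2$ to admit a nonnegative value), Claim \ref{k2approxk1} gives $r\leq (s-6)/2$, and substituting into the $V_2$-to-$U_1$ estimate yields $\delta(V_2,U_1)\geq (3s-4)/2+c\geq s-1+c$, so that Lemma \ref{STARSlemma}(ii) supplies the stars after all. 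The main obstacle is not any single step but the case-by-case bookkeeping: in every subcase one of the two raw degree estimates falls just short of the relevant threshold ($s-1+c$ for stars, $s-2+a_2$ for the alternative conclusion), and one must verify that exactly the right tool---the growth of $k_2-\ell_2$, the bound on $r$ from Claim \ref{k2approxk1}, or the choice of which estimate to push on---closes the gap.
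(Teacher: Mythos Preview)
Your proposal is correct and follows essentially the same route as the paper's proof: both arguments handle the first part by a cardinality count (using $a_2\leq 2$ to bound $|U_0|$), and in the second part both derive the bound $\delta(V_2,U_1)\geq (k_2-\ell_2)s+s-5-r+c$ and split on whether $k_2-\ell_2\geq 2$ (stars via Lemma~\ref{STARSlemma}(ii)) or $k_2-\ell_2=1$ (further split on $k_2-k_1$, using Claim~\ref{k2approxk1} to bound $r$). Two minor remarks: your subcase $k_2\geq k_1+2$ is vacuous once $k_2-\ell_2=1$, since \eqref{l2l1} then forces $k_2\leq k_1+1$; and in the $k_2=k_1+1$ subcase the paper sub-splits further on $r$ and $c$ to extract stars whenever possible, falling back to the degree bound $\delta(V_1,U_2)\geq s-2+a_2$ only when $r=s-3$ and $c=1$, whereas you go straight to the degree bound---which is valid since the claim is a disjunction, and slightly more economical.
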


\begin{proof}
First suppose $|U_1|\leq \ell_1s-s$.  Since $|U_2|=\ell_2s+a_2\leq \ell_2s+2<\ell_2s+s$, there exists $U_0'\subseteq U_0$ such that $|U_0'\cup U_1|=\ell_1s-s$.  Now suppose $|U_1|\geq \ell_1s-s+1$ and set $a_1:=|U_1|-(\ell_1s-s)$.  If $\ell_1\geq k_1+2$, then 
\begin{equation}\label{claim:V2U1}
\delta(V_2, U_1)\geq k_2s+2s-5-r-(\ell_2s+s-a_1)=(k_2-\ell_2)+s-5-r+a_1\geq 2s-4+a_1\geq s-1+a_1.  
\end{equation}
Thus we may apply Lemma \ref{STARSlemma}(ii) to get a set of $a_1$ vertex disjoint $s$-stars from $U_1$ to $V_2$ giving $|U_1|-a_1=\ell_1s-s$.  So suppose $\ell_1\leq k_1+1$, which implies $\ell_1=k_1+1$ by \eqref{l1k1}.  Consequently $\ell_2=k_2-1$.  By \eqref{l2l1}, we have $k_2-1=\ell_2\leq \ell_1-1=k_1$.  By \eqref{claim:V2U1}, we have $\delta(V_2, U_1)\geq 2s-5-r+a_1$.  If $k_2=k_1$, then $r\leq \frac{s-6}{2}$ and thus $\delta(V_2, U_1)\geq s-1+a_1$. So suppose $k_2=k_1+1$, which implies $r\leq s-3$ by Claim \ref{k2approxk1}.  If $r\leq s-4$, then \eqref{claim:V2U1} gives $\delta(V_2, U_1)\geq s-1+a_1$.  So suppose $r=s-3$.  If $a_1\geq 2$, we have $\delta(V_2, U_1)\geq s$.  Otherwise $a_1=1$ and $\delta(V_1, U_2)\geq k_2s+2s-5-r-(\ell_1s-a_2)\geq s-2+a_2$.
\end{proof}

$a_2=1$, $b_1=2$.  In this case, $|V_1|>|U_2|$ by \eqref{l2l1} and since $\delta(V_1, U_2)\geq s-1$, there is a vertex $u_2\in U_2$ such that $\deg(u, V_1)\geq s$ and we are done.

$a_2=2$, $b_1=1$.  If there is a vertex $v\in V_1$ with $\deg(v, U_2)\geq s$, then we apply Claim \ref{Claim 22222} to either finish or get $\delta(V_1, U_2)\geq s-2+a_2$.  However, if $\delta(V_1, U_2)\geq s-2+a_2$, then the fact that $a_2=2$, contradicts \eqref{s-1:V1U2}. So suppose $\Delta(V_1, U_2)\leq s-1$.  Since $\delta(U_2, V_1)\leq s-1$, there exists $u\in U_2$ such that for all $v\in V_1$ we have $$n+3s-5\leq \deg(v)+\deg(u)\leq \ell_1s+s-1+s-1+\ell_2s-2+s-1=n+3s-5,$$ thus $G[V_1, U_0\cup U_1]$ is complete.  Let $v_0,v_0'\in V_0$.  Let $u_2\in N(v_0)\cap U_2$ and choose a set of $s-1$ vertices $L\subseteq N(u_2)\cap V_1$.  Since $\Delta(V_1, U_2)\leq s-1$, there exists $u_2'\in N(v_0')\cap U_2$ such that $\deg(u_2', V_1\setminus L)\geq s-1$.  Let $L'$ be a set of $s-1$ vertices in $N(u_2')\cap (V_1\setminus L)$.  Since $G[V_1, U_0\cup U_1]$ is complete we can move $u_2$ and $u_2'$.

$a_2=1$, $b_1=1$.  If there is a vertex $v_1\in V_1$ with $\deg(v_1, U_2)\geq s$, then we apply Claim \ref{Claim 22222} to either finish or get $\delta(V_1, U_2)\geq s-2+a_2$. Since $a_2=1$, we have $\delta(V_1, U_2)\geq s-1$.  Since $|V_1|\geq |U_2|$, $\delta(V_1, U_2)\geq s-1$, and $\deg(v_1, U_2)\geq s$, there exists a vertex $u_2\in U_2$ such that $\deg(u_2, V_1)\geq s$ and we are done.  So we may suppose $\Delta(V_1, U_2),\Delta(U_2, V_1)\leq s-1$.  This implies that $\delta(U_2, V_0\cup V_2)\geq |V_0\cup V_2|-1$ and $\delta(V_1, U_0\cup U_1)\geq |U_0\cup U_1|-1$.  Since $\delta(V_1, U_2)+\delta(U_2, V_1)\geq 2s-3$, we can choose $u_2\in U_2$ such that $\deg(u_2, V_1)\geq s-1$.  Let $v_0\in V_0\cap N(u_2)$, which exists since $|V_0|\geq s-1$ and $\delta(U_2, V_0\cup V_2)\geq |V_0\cup V_2|-1$.  We have $\deg(v_0, U_1)>2s-2$ and thus $G[N(u_2)\cap V_1, N(v_0)\cap U_1]$ contains a copy of $K_{s-1,s-1}$.  This allows us to move one vertex from $U_2$ as needed.

\noindent
\textbf{Case 3}  For some $\ell_1\geq k_1$, we have $\ell_1s<|V_1\setminus V_1^M|\leq |V_0\cup V_1|<\ell_1s+s$.  Set $b_1:=|V_1\setminus V_1^M|-\ell_1s>0$ and $b_2:=|V_2|-(\ell_2s-s)$.  Reset $V_1:=V_1\setminus V_1^M$ and $V_0:=V_0\cup V_1^M$.  Set $\ell_2=m-\ell_1$.

\noindent
\textbf{Case 3.1} $|U_2\setminus U_2^M|\geq \ell_2s$.  Let $a_2:=|U_2\setminus U_2^M|-\ell_2s$.  Reset $U_2:=U_2\setminus U_2^M$ and $U_0:=U_0\cup U_2^M$.  We have 
\begin{equation}
\delta(V_1, U_2)+\delta(U_2, V_1)\geq 3s-5+a_2+b_1\geq 2s-2+a_2+b_1.
\end{equation}
Note that $a_2\geq 0$, $b_1>0$, so we are done by Lemma \ref{lemma:diagonalsum}.

\noindent
\textbf{Case 3.2} $|U_2\setminus U_2^M|<\ell_2s$.  Reset $U_2:=U_2\setminus U_2^M$ and $U_0:=U_0\cup U_2^M$.  We have $|U_1\cup U_0|>\ell_1s$.  

\textbf{Case 3.2.1.} $|U_0\cup U_1|\geq \ell_1s+s$.

\textbf{Case 3.2.1.1.} First suppose that $|U_1|\leq \ell_1s$.  Let $\bar{V}_i=\{v\in V_i:\deg(v, U_{3-i})\geq s\}$.  If $|\bar{V}_1|\geq \frac{n}{8}$ or $|\bar{V}_2|\geq \frac{n}{8}$, then we either get a set of $b_1$ vertex disjoint $s$-stars from $\bar{V}_1$ to $U_2$ or a set of $b_2$ vertex disjoint $s$-stars from $\bar{V}_2$ to $U_1$ by Lemma \ref{STARSlemma}(i).  Since $|U_1|\leq \ell_1s$ and $\ell_1s+s\leq |U_0\cup U_1|$ we can choose a set $U_0'\subseteq U_0$ such that $|(U_0\cup U_1)\setminus U_0'|=\ell_1s$ or we can choose a set $U_0'\subseteq U_0$ such that $|(U_0\cup U_1)\setminus U_0'|=\ell_1s+s$.  For $i=1,2$, let $\tilde{V}_i=\{v\in V_i\setminus \bar{V}_i:\deg(v, U_1\cup U_2)\leq |U_i|+s-2\}$.  We have 
\begin{align}
\delta(\tilde{V}_1, U_0)+\delta(\tilde{V}_2, U_0)\geq n+3s-4-(|U_1|+s-2+|U_2|+s-2)=|U_0|+s\label{gadget1}
\end{align}

 
If $|\tilde{V}_1|\geq \frac{n}{8}$ and $|\tilde{V}_2|\geq \frac{n}{8}$, then by (\ref{gadget1}) and Lemma \ref{K_1sum} we can find a $K_{s,s}$ with $b_1$ vertices in $V_1$ and $s-b_1$ vertices in $V_2$.  Then we choose $U_0'\subseteq U_0$ such that $|V_1|-b_1=\ell_1s=|(U_0\cup U_1)\setminus U_0'|$.  Otherwise we have $|\tilde{V}_1|<\frac{n}{8}$ or $|\tilde{V}_2|<\frac{n}{8}$.  Suppose that $|\tilde{V}_1|<\frac{n}{8}$.  First note that for all $v\in V_1\setminus (\bar{V}_1\cup \tilde{V}_1)$, $\deg(v, U_2)=s-1$.  Since $|V_1\setminus (\bar{V}_1\cup \tilde{V}_1)|>\frac{n}{8}$, we can apply Lemma \ref{STARSlemma}(i) to get a set of $b_1$ vertex disjoint $(s-1)$-stars from $V_1\setminus (\bar{V}_1\cup \tilde{V}_1)$ to $U_2$.  Let $v_1, v_2,\dots, v_{b_1}$ be the centers and $L(v_i)$ be the leaf sets for each star.  

If $|\tilde{V}_2|\geq \frac{n}{8}$, then for every star we have $|N(L(v_i))\cap \tilde{V}_2|>\frac{n}{16}$ and for all $\tilde{v}\in N(L(v_i))\cap \tilde{V}_2$ we have $$n+3s-4\leq \deg(v_i)+\deg(\tilde{v})\leq |U_1|+s-1+\deg(v_i, U_0)+|U_2|+s-2+\deg(\tilde{v}, U_0),$$ which implies $\deg(v_i, U_0)+\deg(\tilde{v}, U_0)\geq |U_0|+s-1$.  So for each $v_i$, we can find a $K_{s-1,s-1}$ with $s-1$ vertices in $N(v_i)\cap U_0$ and $s-1$ vertices in $N(L(v_i))\cap \tilde{V}_2$.  Since we only need to move at most $s-1$ vertices from $V_1$, we can always choose a unique vertex from $U_0$ for each center in $V_1$ to complete the copy of $K_{s,s}$.  

If $|\tilde{V}_2|< \frac{n}{8}$, then  $|V_i\setminus (\bar{V}_i\cup \tilde{V}_i)|>\frac{n}{8}$ for $i=1,2$.  Set $V_i':=V_i\setminus (\bar{V}_i\cup \tilde{V}_i)$ for $i=1,2$.  We know that $\min\{b_1, s-b_1\}\leq \frac{s}{2}$ and since $s\geq 3$,  $\min\{b_1, s-b_1\}\leq s-2$.  Without loss of generality, suppose $b_1\leq s-b_1$.  Since $|V_1'|>\frac{n}{8}$, we start by taking a set of $b_1$ vertex disjoint $(s-1)$-stars from $V_1'$ to $U_2$.  Let $v_1, v_2,\dots, v_{b_1}$ be the centers and $L(v_i)$ be the leaf sets for each star.  For every star we have $|N(L(v_i))\cap V_2'|>\frac{n}{16}$ and for all $v'\in N(L(v_i))\cap V_2'$ we have $$n+3s-4\leq \deg(v_i)+\deg(v')\leq |U_1|+s-1+\deg(v_i, U_0)+|U_2|+s-1+\deg(v', U_0),$$ which implies $\deg(v_i, U_0)+\deg(v', U_0)\geq |U_0|+s-2$. So for each $v_i$, we can find a $K_{s-2,s-1}$ with $s-2$ vertices in $U_0\cap N(v_i)$ and $s-1$ vertices in $N(L(v_i))\cap V_2'$.  Since we only need to move at most $s-2$ vertices from $V_1$, we can always choose a unique vertex from $U_0$ for each center in $V_1$ to complete the copy of $K_{s,s}$.

\textbf{Case 3.2.1.2.} $|U_1|>\ell_1s$.  Let $a_1:=|U_1|-\ell_1s$.  In this case we have 
\begin{equation}\label{3212}
\delta(V_2, U_1)\geq k_2s+2s-5-r-(\ell_2s-a_1)=(k_2-\ell_2)s+2s-5-r+a_1.
\end{equation}

\textbf{Case 3.2.1.2.1.} $\ell_1>k_1$. Then $\ell_2<k_2$ and \eqref{3212} gives $\delta(V_2, U_1)\geq s-1+a_1$ and we are done by moving vertices to $V_1$. 

\textbf{Case 3.2.1.2.2.} $\ell_1=k_1$ and so $\ell_2=k_2$.  

Suppose $k_2=k_1$. Then $r\leq \frac{s-6}{2}$ and we have $\delta(V_2, U_1)\geq s-1+a_1$ so we are done by moving vertices to $V_1$.

Suppose $k_2=k_1+1$. This implies $r\leq s-3$. Now we have 
$\delta(V_2, U_1)\geq s-2+a_1$.  
If $\delta(V_2, U_1)\geq s$, then we would be done by moving vertices to $V_1$. So suppose $a_1=1$ and $r=s-3$. Recall $b_2=|V_2|-(k_2s-s)$.  We have 
$\delta(U_1, V_2)\geq k_1s+s+r-(k_1s+s-b_2)=s-3+b_2$, 
so we would be done by moving vertices to $V_1$ unless $1\leq b_2\leq 2$.  Furthermore, we have 
\begin{equation}\label{s-3+b1}
\delta(U_2, V_1)\geq k_1s+s+r-(k_1s+s-b_1)=s-3+b_1
\end{equation}

Suppose $b_2=2$.  Since $a_1=1$ and $k_2=k_1+1$ we have $|V_2|>|U_1|$.  Since $\delta(V_2, U_1)\geq s-1$, there exists a vertex $u_1\in U_1$ such that $\deg(u_1, V_2)\geq s$.  If $b_1\geq 3$, then \eqref{s-3+b1} implies $\delta(U_2, V_1)\geq s$ and thus we can move $b_1$ vertices from $V_1$ by Lemma \ref{STARSlemma}(iii).  Otherwise let $V_2'=\{v\in V_2:\deg(v, U_1)\leq s-1\}$.  If $|V_2\setminus V_2'|>2s\alpha^{1/3}k_2s$, then since $\Delta(U_1, V_2)\leq 2\alpha^{1/3}k_2s$ there would be two vertex disjoint $s$-stars from $V_2\setminus V_2'$ to $U_1$.  So suppose $|V_2'|> \frac{n}{4}$.  Note that for all $v\in V_2'$, $\deg(v, U_0\cup U_2)\geq k_2s+2s-5-r-(s-1)=k_2s-1=|U_0\cup U_2|$, so $G[V_2', U_0\cup U_2]$ is complete. If $b_1=1$, then since $\delta(V_1, U_0\cup U_2)\geq 2s-3\geq s$ we can move a vertex from $V_1$, giving $|U_1\setminus\{u_1\}|=k_1s=|V_1|-1$.  So suppose $b_1=2$.  If there is a vertex $v_1\in V_1$ such that $\deg(v_1, U_0\cup U_2)\geq 2s$, then we would be done since $\delta(V_1, U_0\cup U_2)\geq 2s-3\geq s$ and $G[V_2', U_0\cup U_2]$ is complete so we can move two vertices from $V_1$.  So suppose $\Delta(V_1, U_0\cup U_2)\leq 2s-1$.  Then $\delta(V_1, U_1)\geq k_2s+2s-5-r-(2s-1)=k_2s-s-1=k_1s-1=|U_1|-2$.  Since $b_1=2$, we have $\delta(U_2, V_1)\geq s-1$ by \eqref{s-3+b1}.  Thus there are two vertex disjoint $s$-stars from $U_2$ to $V_1$ with leaf sets $L_1$ and $L_2$.  Let $\tilde{U}_1:=U_1\cap (N(L_1)\cap N(L_2))$ and note that since $\delta(V_1, U_1)\geq |U_1|-2$, we have $|\tilde{U}_1|\geq |U_1|-4s$.  Now since $\delta(V_2', U_1)\geq s-1$ and $\Delta(U_1, V_2)\leq 2\alpha^{1/3}k_2s$, there exist two vertex disjoint $(s-1)$-stars from $V_2'$ to $\tilde{U}_1$.  Since $G[\tilde{U}_1, L_1\cup L_2]$ and $G[V_2', U_0\cup U_2]$ are complete, we can move two vertices from $V_2$ to $V_1$ and $U_2$ to $U_1$.  We finish by moving $s-3$ vertices from $U_0$ to $U_1$ and $s-4$ vertices from $V_0$ to $V_1$, giving $|U_1|+2+s-3=k_1s+s=|V_1|+2+s-4$.

%

Suppose $b_2=1$.  If there exists a vertex $v_2\in V_2$ such that $\deg(v_2, U_1)\geq s$, then we would be done by moving $v_2$ to $V_1$.  So suppose $\Delta(V_2, U_2)\leq s-1$ and thus $\delta(V_2, U_0\cup U_2)\geq k_2s+2s-5-r-(s-1)=k_2s-1=|U_0\cup U_2|$.  Let $v_2\in V_2$ and let $L$ be the set of leaves in $U_1$ of an $(s-1)$-star with center $v_2$.  Let $V_1'=N(L)\cap V_1$ and note that $|V_1'|\geq |V_1|-2s\alpha^{1/3}k_1s$.  Since $\delta(V_1', U_0\cup U_2)\geq k_2s+2s-5-r-(k_1s+1)=2s-3\geq s$, there exists a vertex $u_2\in U_0\cup U_2$ such that $\deg(u, V_1')\geq s-1$.  Since $G[V_2, U_0\cup U_2]$ is complete, we can move $v_2$ and $u_2$.  We finish by moving $s-2$ vertices from $U_0$ to $U_1$ and $s-1-b_1$ vertices from $V_0$ to $V_1$ giving $|U_1|+1+s-2=k_1s+s=|V_1|+1+s-1-b_1$.  


Finally, suppose $k_2\geq k_1+2$.  Here we have $\delta(U_1, V_2)\geq k_1s+s+r-(k_1s+s-b_2)=r+b_2$.  If $r\geq s-b_2$, then $\delta(U_1, V_2)\geq s$ and we would be done by moving vertices from $V_2$ to $V_1$, so suppose $r\leq s-1-b_2$.  Then we have 
\begin{equation}\label{321223}
\delta(V_2, U_1)\geq k_2s+2s-5-r-(k_2s-a_1)\geq s-4+a_1+b_2.  
\end{equation}
We would have $\delta(V_2, U_1)\geq s$ and be done unless $2\leq a_1+b_2\leq 3$.

Suppose $a_1=2$, $b_2=1$.  If $r\leq s-3$, then $\delta(V_2, U_1)\geq s$ by \eqref{321223}, so suppose $r= s-2$.  We have $\delta(U_1, V_2), \delta(V_2, U_1)\geq s-1$ and $\delta(V_1, U_0\cup U_2)\geq k_2s+2s-5-r-(k_1s+2)\geq 3s-5$.  If there was a vertex $v_2\in V_2$ such that $\deg(v_2, U_1)\geq s$, then we would be done by moving $v_2$ to $V_1$.  So suppose $\Delta(V_2, U_1)\leq s-1$ and thus $\delta(V_2, U_0\cup U_2)\geq k_2s+2s-5-r-(s-1)=k_2s-2=|U_0\cup U_2|$. Let $v_2\in V_2$ and let $L:=N(v_2)\cap U_1$.  Every vertex in $N(L)\cap V_1=:V_1'$ has at least $3s-5\geq s$ neighbors in $U_0\cup U_2$, so there exists a vertex $u_2\in U_0\cup U_2$ such that $\deg(u_2, V_1')\geq 3s-5\geq s-1$.  Then since $G[V_2, U_0\cup U_2]$ is complete, we have a copy of $K_{s,s}$ which allows us to move $v_2$.  We finish by moving $s-3$ vertices from $U_0$ to $U_1$ and $s-1-b_1$ vertices from $V_0$ to $V_1$ giving $|U_1|+1+s-3=k_1s+s=|V_1|+1+s-1-b_1$.

Suppose $a_1=1$, $b_2=2$.  If $r\leq s-4$, then $\delta(V_2, U_1)\geq s$ by \eqref{321223}, so suppose $r= s-3$.  We have $\delta(U_1, V_2), \delta(V_2, U_1)\geq s-1$ and $\delta(V_1, U_0\cup U_2)\geq k_2s+2s-5-r-(k_1s+1)\geq 3s-3$.  Let $V_2'=\{v\in V_2:\deg(v, U_1)\leq s-1\}$.  If $|V_2\setminus V_2'|>2s\alpha^{1/3}k_2s$, then since $\Delta(U_1, V_2)\leq 2\alpha^{1/3}k_2s$ there would be two vertex disjoint $s$-stars from $V_2\setminus V_2'$ to $U_1$, so suppose not.  Then $|V_2'|>\frac{n}{4}$.  Note that $G[V_2', U_0\cup U_2]$ is complete.  Since $|V_2|>|U_1|$ and $\delta(V_2, U_1)\geq s-1$, there exists a vertex $u_1\in U_1$ such that $\deg(u_1, V_2)\geq s$.  Now we must move $b_1$ vertices from $V_1$.  If say $\frac{n}{8}$ vertices in $V_1$ have at least $s$ neighbors in $U_0$, then we can find a $K_{s,s}$ with $s$ vertices in $U_0$, $b_1$ vertices in $V_1$ and $s-b_1$ vertices in $V_2$ by Lemma \ref{gadget1} and the fact that $G[V_2', U_0\cup U_2]$ is complete.  Otherwise we have $\frac{n}{4}$ vertices with at most $s-1$ neighbors in $U_0$ and consequently at least $3s-3-(s-1)\geq s$ neighbors in $U_2$.  Either way there exists $b_1$ vertex disjoint $s$-stars from $V_1$ to $U_2$.

Suppose $a_1=1=b_2$.  If there is a vertex in $V_2$ with $s$ neighbors in $U_1$, then we would be done, so suppose not.  Since $b_2=1$, we have $r\leq s-2$.  If $r=s-2$, then $\delta(U_1, V_2)\geq s-1$.  If $r\leq s-3$, then $\delta(V_2, U_1)\geq s-1$.  So either way there is a vertex $v_2\in V_2$ such that $\deg(v_2, U_1)= s-1$.  Let $L:=N(v_2)\cap U_1$.  We have $\delta(V_2, U_0\cup U_2)\geq k_2s+2s-5-r-(s-1)\geq k_2s-2=|U_0\cup U_1|-1$.  Since $\delta(V_1, U_0\cup U_2)\geq 3s-4$, every vertex in $N(L)\cap V_1=:V_1'$ has at least $3s-5$ neighbors in $N(v_2)\cap (U_0\cup U_2)$. So there exists a vertex $u_2\in N(v_2)\cap (U_0\cup U_2)$ with at least $3s-5\geq s-1$ neighbors in $V_1'$.  This gives us a copy of $K_{s,s}$ which allows us to move $v_2$.

\textbf{Case 3.2.2.} $\ell_1s<|U_0\cup U_1|<\ell_1s+s$.  

\textbf{Case 3.2.2.1.} $|U_1|\leq \ell_1s$.  Thus there exists $U_0'\subseteq U_0$ such that $|(U_0\cup U_1)\setminus U_0'|=\ell_1s$.  So we try to make $|V_1|=\ell_1s$ or $|V_2|=\ell_2s$.  Recall $\ell_2=m-\ell_1$ and $b_1=|V_1|-\ell_1s$.  Let $a_2:=|U_2|-(\ell_2s-s)$.  We have 
\begin{equation}
\delta(V_1, U_2)+\delta(U_2, V_1)\geq n+3s-5-(\ell_1s+s-a_2+\ell_2s-b_1)=2s-5+a_2+b_1.  
\end{equation}
If $\delta(V_1, U_2)\geq s$ or $\delta(U_2, V_1)\geq s$, then we would be able to find $b_1$ vertex disjoint $s$-stars from $V_1$ to $U_2$ by Lemma \ref{STARSlemma}(i) or (iii) and we are done.  So suppose $\delta(V_1, U_2)\leq s-1$ and $\delta(U_2, V_1)\leq s-1$, thus $2\leq a_2+b_1\leq 3$.  If $\delta(V_1, U_2)+\delta(U_2, V_1)= 2s-2$, then we have $\delta(V_1, U_2)= s-1$ and $\delta(U_2, V_1)= s-1$.  
Furthermore, we have 
\begin{equation}\label{U0U1<->V0V2-3221}
\delta(U_0\cup U_1, V_0\cup V_2)+\delta(V_0\cup V_2, U_0\cup U_1)\geq n+3s-5-(\ell_1s+b_1+\ell_2s-s+a_2)=4s-5-a_2-b_1.
\end{equation}
Let $U_2':=\{u\in U_2: \deg(u, V_1)\leq s-1\}$.

Suppose $a_2=2$, $b_1=1$.  If there is a vertex $v_1\in V_1$ with $\deg(v_1, U_2)\geq s$, then we are done by moving $v_1$ to $V_2$.  If $e(U_2, V_1)>(s-1)|V_1|$, then there exists a vertex $v_1\in V_1$ such that $\deg(v_1, U_2)\geq s$, so suppose not. If $|U_2\setminus U_2'|>3\alpha^{2/3}k_2s$, then since $|V_1|-|U_2|\leq 2\alpha^{2/3}k_2s$ we have $e(U_2, V_1)>(s-1)|V_1|$, so suppose not.  Then $|U_2'|\geq |U_2|-3\alpha^{2/3}k_2s$.  For all $v\in V_1$ and $u\in U_2'$ we have 
\begin{equation}\label{32211}
n+3s-5\leq \deg(v)+\deg(u)\leq \ell_1s+s-1+s-1+\ell_2s-2+s-1=n+3s-5,
\end{equation}
thus $G[V_1, U_0\cup U_1]$ is complete and $G[U_2', V_0\cup V_2]$ is complete.  Since $\delta(U_2', V_1)\geq s-1$, there exists a vertex $v_1\in V_1$, such that $\deg(v_1, U_2')= s-1$.  Let $u_0\in U_0$ and note that $\deg(u_0, V_2)>s$.  Since $G[V_1, U_0\cup U_1]$ is complete we can move $v_1$ from $V_1$ along with $u_0$.

Suppose $a_2=1, b_1=2$.  First suppose that there exists $v_1\in V_1$ with at least $s$ neighbors in $U_2$.  Let $L\subseteq N(v_1)\cap U_2$ with $|L|=s$.  In this case we can apply the argument of the previous paragraph to the sets $V_1\setminus v_1$ and $U_2\setminus L$.  
So suppose that $\Delta(V_1, U_2)\leq s-1$ and $|U_2'|\geq |U_2|-2\alpha^{2/3}k_2s$.  Equation \eqref{32211} holds which implies that $G[V_1, U_0\cup U_1]$ is complete and $G[U_2', V_0\cup V_2]$ is complete.  Every vertex in $U_2'$ has $s-1$ neighbors in $V_1$, so there are two vertex disjoint $(s-1)$-stars from $V_1$ to $U_2'$ with centers $v_1$ and $v_1'$.  Since $G[V_1, U_0\cup U_1]$ is complete and $|U_0|\geq s-1\geq 2$, there exist $u_0, u_0'\in U_0$.  Since $\deg(u_0, V_2), \deg(u_0', V_2)>2s$, we can move $v_1$ and $v_1'$ by taking $u_0$ and $u_0'$.  Then let $U_0'\subseteq U_0$ so that $|U_1|+|U_0'|=\ell_1s=|V_1|-2$.


Suppose $a_2=1, b_1=1$.  If there is a vertex $v_1\in V_1$ such that $\deg(v_1, U_2)\geq s$, then we can move $v_1$ to $V_2$ and be done, so suppose $\Delta(V_1, U_2)\leq s-1$.  First suppose that $\Delta(U_2, V_1)\leq s-1$.  For all $v\in V_1$ and $u\in U_2$ we have $n+3s-5\leq \deg(u)+\deg(v)\leq \ell_1s+s-1+s-1+\ell_2s-1+s-1=n+3s-4$.  Thus $\delta(V_1, U_0\cup U_1)\geq |U_0\cup U_1|-1$ and $\delta(U_2, V_0\cup V_2)\geq |V_0\cup V_2|-1$.  Let $v_1\in V_1$ such that $\deg(v_1, U_2)= s-1$, which exists since $\delta(V_1, U_2)\geq s-1$ or $\delta(U_2, V_1)\geq s-1$.  Let $L:=N(v_1)\cap U_2$ and $V_2':=N(L)\cap V_2$; note that $|V_2'|\geq |V_2|-s$ since $\delta(U_2, V_0\cup V_2)\geq |V_0\cup V_2|-1$.  Finally let $u_0\in U_0\cap N(v_1)$, which exists since $\delta(V_1, U_0\cup U_1)\geq |U_0\cup U_1|-1$ and $|U_0|\geq s-1$.  Since $\deg(u_0, V_2')>s$, we can move $v_1$ along with $u_0$.  So we may suppose that there exists some $u_2\in U_2$ such that $\deg(u_2, V_1)\geq s$.  Let $V_2':=\{v\in V_2:\deg(v, U_1)\leq s-1\}$.  If say $|V_2\setminus V_2'|>\frac{n}{8}$, then since $\Delta(U_1, V_2)\leq 2\alpha^{1/3}k_2s$ we could move $b_2$ vertices from $V_2$ and we would be done. So we may suppose that $|V_2'|>\frac{n}{4}$.  Note that we have
\begin{equation}\label{32213}
\delta(V_1, U_0)+\deg(V_2', U_0)\geq n+3s-4-(|U_1|+s-1+|U_2|+s-1)=|U_0|+s-2.
\end{equation}
Let $v_1\in V_1$ such that $\deg(v_1, U_2)=s-1$ and let $L:=N(v_1)\cap U_2$.  Let $\tilde{V}_2:=V_2'\cap N(L)$ and note that $|\tilde{V}_2|>\frac{n}{8}$.  For all $\tilde{v}\in \tilde{V}_2$ we have $\deg(\tilde{v}, N(v_1)\cap U_0)\geq s-2$ by \eqref{32213}.  Since $|\tilde{V}_2|>|N(v_1)\cap U_0|$, there exists $u_0\in N(v_1)\cap U_0$ such that $\deg(u_0, \tilde{V}_2)\geq s-1$.  This completes a copy of $K_{s,s}$ which allows us to move $v_1$.

\textbf{Case 3.2.2.2.} $|U_1|>\ell_1s$.  Let $a_1:=|U_1|-\ell_1s$.  Recall $\ell_2=m-\ell_1$, $b_1=|V_1|-\ell_1s$, $a_2=|U_2|-(\ell_2s-s)$, and $b_2:=|V_2|-(\ell_2s-s)$.  We have
\begin{equation}\label{3222:V1U2}
\delta(V_1, U_2)+\delta(U_2, V_1)\geq n+3s-5-(\ell_1s+s-a_2)-(\ell_2s-b_1)=2s-5+a_2+b_1
\end{equation}
and
\begin{equation}\label{3222:V2U1}
\delta(V_2, U_1)+\delta(U_1, V_2)\geq n+3s-5-(\ell_2s-a_1)-(\ell_1s+s-b_2)=2s-5+a_1+b_2
\end{equation}

\textbf{Case 3.2.2.2.1.} For some $i\in\{1,2\}$ we have $\delta(V_i, U_{3-1})\geq s$ or $\delta(U_{3-i}, V_i)\geq s$.  Without loss of generality (all cases are similar, but not exactly the same), suppose $\delta(V_2, U_1)\geq s$.  This implies by Lemma \ref{STARSlemma}(iii) that there is a set of $a_1$ vertex disjoint $s$-stars from $U_1$ to $V_2$ and a set of $b_2$ vertex disjoint $s$-stars from $V_2$ to $U_1$.  So if we can move $a_2$ vertices from $U_2$ or $b_1$ vertices from $V_1$, then we say that we are done.  If $\delta(V_1, U_2)\geq s$ or $\delta(U_2, V_1)\geq s$, then we can apply Lemma \ref{STARSlemma}(i) or (iii) and we are done, so suppose not.  This implies $2\leq a_2+b_1\leq 3$ by \eqref{3222:V1U2}.  Furthermore, if $a_2+b_1=3$, then $\delta(V_1, U_2)+\delta(U_2, V_1)\geq 2s-2$ and we may suppose $\delta(V_1, U_2)=s-1$ and $\delta(U_2, V_1)=s-1$.  Let $U_2':=\{u\in U_2:\deg(u, V_1)\leq s-1\}$ and $V_1':=\{v\in V_1:\deg(v, U_2)\leq s-1\}$.  

Since $2\leq a_2+b_1\leq 3$, either $a_2=1$ or $b_1=1$.  Without loss of generality suppose $a_2=1$ and thus $1\leq b_1\leq 2$.  If there is a vertex $u_2\in U_2$ such that $\deg(u_2, V_1)\geq s$, then we can move $u_2$ and we are done, so suppose $\Delta(U_2, V_1)\leq s-1$.  For all $u\in U_2$ and $v\in V_1'$ we have $n+3s-5\leq \deg(u)+\deg(v)\leq \ell_1s+s-1+s-1+\ell_2s-b_1+s-1\leq n+3s-4$ and thus $\delta(U_2, V_0\cup V_2)\geq |V_0\cup V_2|-1$ and $\delta(V_1', U_0\cup U_1)\geq |U_0\cup U_1|-1$.  If $b_1=1$, then we may suppose $\Delta(V_1, U_2)\leq s-1$ or else we are done.  In this case $V_1'=V_1$.  If $b_1=2$, then $\delta(V_1, U_2)\geq s-1$. If there are two vertex disjoint $s$-stars from $V_1$ to $U_2$, then we are done since $b_1\leq 2$.  This implies that $|V_1'|\geq |V_1|-2s\alpha^{1/3}k_2s$.  So in either case there exists a vertex $u_2\in U_2$ such that $\deg(u_2, V_1')=s-1$.  Since $\delta(V_2, U_1)\geq s$, there is a set of $s$ vertex disjoint $s$-stars from $N(u_2)\cap V_2$ to $U_1$.  Finally since $\delta(V_2', U_0\cup U_1)\geq |U_0\cup U_1|-1$, the leaf set of one of the $s$-stars from $V_2$ to $U_1$ will form a $K_{s-1,s-1}$ with $s-1$ vertices in $N(u_2)\cap V_1'$ and $s-1$ vertices in $U_1$.  Then we move $b_2-1$ more vertices from $V_2$.

\textbf{Case 3.2.2.2.2.} For all $i\in\{1,2\}$ we have $\delta(V_i, U_{3-i})\leq s-1$ and $\delta(U_{3-i}, V_i)\leq s-1$.  So by \eqref{3222:V1U2} and \eqref{3222:V2U1}, we may suppose $2\leq a_1+b_2\leq 3$ and $2\leq a_2+b_1\leq 3$.  We have 
\begin{equation}\label{32222a}
\delta(V_2, U_1)\geq k_2s+2s-5-r-(\ell_2s-a_1)=(k_2-\ell_2)s+2s-5-r+a_1\geq (k_2-\ell_2)s+s-4+a_1.
\end{equation}
If $\ell_1>k_1$, then $k_2>\ell_2$ and $\delta(V_2, U_1)\geq s$ by \eqref{32222a}.  So suppose $\ell_1=k_1$ and thus $\ell_2=k_2$.  We also have
\begin{equation}\label{32222b}
\delta(V_1, U_2)\geq k_2s+2s-5-r-(k_1s+s-a_2)=(k_2-k_1)s+s-5-r+a_2.
\end{equation}
If $k_2\geq k_1+2$, then $\delta(V_1, U_2)\geq s$ by \eqref{32222b}. So suppose $k_2\leq k_1+1$.  If $k_2=k_1$, then $r\leq \frac{s-6}{2}$ by Claim \ref{k2approxk1} and thus \eqref{32222a} gives $\delta(V_2, U_1)\geq 2s-5-\frac{s-6}{2}+a_1\geq s$. So suppose $k_2=k_1+1$ which implies $r\leq s-3$ by Claim \ref{k2approxk1}.  If $r\leq s-4$, then \eqref{32222a} implies $\delta(V_2, U_1)\geq s-1+a_1\geq s$.  So suppose $r=s-3$.  Finally if either $a_1\geq 2$ or $a_2\geq 2$, then \eqref{32222a} or \eqref{32222b} implies $\delta(V_1, U_2)\geq s$ or $\delta(V_2, U_1)\geq s$.  So suppose $a_1=1=a_2$ and thus $\delta(V_1, U_2)=s-1=\delta(V_2, U_1)$.  For $i=1,2$, let $V_i':=\{v\in V_i:\deg(v, U_{3-i})\leq s-1\}$.  For all $v\in V_i$, $\deg(v, U_0\cup U_i)\geq k_2s+2s-5-r-(s-1)=k_2s-1=|U_0\cup U_i|$, thus $G[V_i', U_0\cup U_i]$ is complete.

First suppose $b_1=2=b_2$. Since $|V_1|>|U_2|$ and $|V_2|>|U_1|$, there are vertices $u_1\in U_1$ and $u_2\in U_2$ such that $\deg(u_1, V_2)\geq s$ and $\deg(u_2, V_1)\geq s$.  If $|V_i\setminus V_i'|>2s\alpha^{1/3}k_2s$ for some $i$, then we would be done by moving two vertices from $V_i\setminus V_i'$ and moving $u_i$ from $U_i$ for some $i=1,2$.  So we may assume that $|V_i'|\geq |V_i|-s\alpha^{1/3}n$ for $=1,2$.  Since $\delta(V_1', U_2)\geq s-1$ and $|V_1'|\geq |V_1|-s\alpha^{1/3}n$, there exists $u_2\in U_2$ such that $\deg(u_2, V_1')\geq s-2$ and there exists $u_1\in U_1$ such that $\deg(u_1, V_2')\geq 2$.  Now since $G[V_1', U_0\cup U_1]$ and $G[V_2', U_0\cup U_2]$ are complete, we have a copy of $K_{s,s}$ with $s-2$ vertices in $V_1'$, $2$ vertices in $V_2'$, $s-2$ vertices in $U_0$, $1$ vertex in $U_1$ and $1$ vertex in $U_2$.  Then we move the remaining $s-4$ vertices from $V_0$ to $V_1$

Now suppose $b_i=2$ and $b_{3-i}=1$ for some $i$.  Without loss of generality, suppose $b_1=1$ and $b_2=2$.  Since $|V_2|>|U_1|$, there is a vertex $u_1\in U_1$ such that $\deg(u_1, V_2)\geq s$.  So we would be done unless $\Delta(V_1, U_2)\leq s-1$ and thus $V_1'=V_1$.  Let $u_2, u_2'\in U_2$ be the centers of two vertex disjoint $(s-1)$-stars from $U_2$ to $V_1$.  Then since $\delta(V_2, U_1)\geq s-1$ we can choose two vertex disjoint $(s-1)$-stars from $(N(u_2)\cap N(u_2'))\cap V_2$ to $U_1$.  Then since $G[V_1, U_0\cup U_1]$ is complete we are done.

Finally suppose $b_1=1=b_2$.  If there exists $v_2\in V_2$ (without loss of generality) such that $\deg(v, U_1)\geq s$, then there is a vertex $u_1\in U_1$ such that $\deg(u_1, V_2)\geq s$.  So we would be done unless $\Delta(V_1, U_2)\leq s-1$ and $\Delta(U_2, V_1)\leq s-1$.  Thus $G[V_1, U_0\cup U_1]$ is complete.  Let $u_2, u_2'\in U_2$ be the centers of two vertex disjoint $(s-1)$-stars from $U_2$ to $V_1$.  Then since $\delta(V_2, U_1)\geq s-1$ we can choose two vertex disjoint $(s-1)$-stars from $N(u_2)\cap N(u_2')\cap V_2$ to $U_1$.  Then since $G[V_1, U_0\cup U_1]$ is complete we are done.  Otherwise $\Delta(V_i, U_{3-i})\leq s-1$ for $i=1,2$ in which case $G[V_i, U_0\cup U_i]$ is complete for $i=1,2$.  Let $u_1\in U_1$ such that $\deg(u_1, V_2)\geq s-1$ and let $v_1u_2\in E(V_1, U_2)$.  Since $G[V_1, U_0\cup U_1]$ and $G[V_2, U_0\cup U_2]$ are complete, we have a copy of $K_{s,s}$ with $s-1$ vertices in $V_2$, $1$ vertex in $V_1$, $s-2$ vertices in $U_0$, $1$ vertex in $U_1$, and $1$ vertex in $U_2$.  Then we move the remaining $s-2$ vertices from $V_0$ to $V_2$.

\section{Examples when $\delta_U$ is constant}


Here we prove Proposition \ref{probexample}.  We ignore floors and ceilings since they are not vital to our calculations.

\begin{proof}
Given a positive integer $s$, let $c:=s^{1/3}$, $d:=2c$, $a := s^c$, and $b := \frac{s}{d}a=\frac{s^{c+1}}{d}$. Let $s$ be large enough so that $s^{2s^{2/3}}\left(\frac{(3d)^{d}}{s^{(c-1)s}}\right)^{s}<\frac{1}{2}$.
Let $A, B$ be sets such that $|A| = a$ and $|B| = b$. Consider the random bipartite graph by adding the pair from $A \times B$ with
probability $p := \frac{3d}{s}$ (all choices made independently). Then for $u\in A$,
$\mathbb{E}(\deg(u)) = pb = 3s^c$ and for $v\in B$, $\mathbb{E}(\deg(v)) = pa = 3ds^{c-1}$.  The probability that there exists $u\in A$ with $\deg(u) < 2s^c$ or $v \in B$ with $\deg(v) < 2ds^{c-1}$ 
is less than $1/2$ by a standard application of Chernoff's bound. In addition,
the probability that there exists $K_{d,s}$ with $d$ vertices in $A$ is at most
$$\binom{a}{d}\binom{b}{s}p^{ds}<a^d b^s p^{ds}=s^{cd} \frac{s^{(c+1)s}}{d^s} \frac{(3d)^{ds}}{s^{ds}}=\frac{s^{cd}}{s^{(d-(c+1))s}}\left(\frac{(3d)^d}{d}\right)^s \leq s^{2s^{2/3}}\left(\frac{(3d)^d}{s^{(c-1)s}}\right)^s <\frac{1}{2}. $$ 
Consequently there exists a graph $H$ on $A\cup B$ such that
\begin{itemize}
\item $\deg(u) \geq 2s^c$ for every $u \in A$, $\deg(v) \geq 2ds^{c-1}$ for $v \in B$ and
\item  $H$ has no $K_{d,s}$ with $d$ vertices in $A$.
\end{itemize}

Let $G$ be obtained from $H$ by adding a set $A'$ of $n - a$ vertices to $A$ and a set $B'$ of $n-b$ vertices to $B$ with $n$ large as usual. We add all edges between $A'$ and $B\cup B'$.  The sum of degrees in $G$ is at least $2s^c+(n-s^c)= n + s^c$.   

Suppose that $G$ can be tiled with $K_{s,s}$.  Since $G[A, B']$ is empty, any copy of $K_{s,s}$ touching $A$ must have $s$ vertices in $B$.  Also, any copy touching $A$ must have at most $d-1$ vertices from $A$, since $H$ has no $K_{d,s}$ with $d$ vertices in $A$.  So the number of copies touching $A$ is at least $\frac{a}{d-1}$.  However, this implies that $s\frac{a}{d-1}\leq |B|= \frac{s}{d}a$, a contradiction.
\end{proof}
In addition, it is possible to construct graphs $G$ for some small values of $s$ in which $\delta_U+\delta_V>n+2s-2\croot{s}+c(s)$ such that $G$ cannot be tiled with $K_{s,s}$ (see \cite{LouisThesis} for details).

\section{Conclusion}

In Theorem \ref{main 1} and Theorem \ref{main 2} we show that if $\delta(G)$ is $\Omega(n)$, then $\delta_U+\delta_V\geq n+3s-5$ suffices to tile $G$ with $K_{s,s}$.  The only example we have which shows $n+3s-5$ is best possible has the property that $\delta_U=\delta_V$.  When $\delta_V>\delta_U$ we have examples which show that we can't do better than $n+3s-7$.  This leaves open the question of whether $n+3s-6$ suffices when $\delta_V>\delta_U$.

In Theorem \ref{probexample}, we show that there exist balanced bipartite graphs on $2n$ vertices with $\delta_U+\delta_V\geq n+s^{s^{1/3}}$ which cannot be tiled with $K_{s,s}$.  An interesting problem would be to determine the largest possible value of $\delta_U+\delta_V$ such that $G[U,V]$ cannot be tiled with $K_{s,s}$.  We note that if $G[U,V]$ is a graph with $\delta_U+\delta_V\geq (1+\ep)n$, then $\delta_U\geq \ep n$ and thus we can apply Theorem \ref{main 1} or Theorem \ref{main 2} to obtain a tiling of $G$.

Finally, while we don't address the case of tiling with $K_{s,t}$ here, we point out that it is easy to prove an analog of Theorem \ref{main 2} for $K_{s,t}$.  In fact, even if we only assume $\delta_U+\delta_V\geq n$, we can tile $G$ with $K_{s,t}$:  the proof of Theorem \ref{main 2} is easy when there exists $\ell$ such that $|U_1|\leq \ell s$ and $|V_0\cup V_1|\geq \ell s$ by Claim \ref{V1>U1}, so we just remove copies of $K_{s,t}$ from $G[U_1, V_1]$, each with $t$ vertices in $U_1$, until the desired property holds and then we can finish the tiling as we do here.

\end{document}